\theoremstyle{plain}
\newtheorem*{theorem*}{Theorem}
\newtheorem{theorem}{Theorem}
\newtheorem{lemma}[theorem]{Lemma} 
\newtheorem{corollary}[theorem]{Corollary}
\newtheorem{proposition}[theorem]{Proposition}
\theoremstyle{definition}
\newtheorem{remark}[theorem]{Remark}
\newtheorem{definition}[theorem]{Definition}
\numberwithin{equation}{section} 
\newcommand{\R}{\mathbb R}
\newcommand{\C}{\mathbb C}
\newcommand{\Hyp}{\mathbb H}
\newcommand{\abs}[1]{\left\vert #1\right\vert}
\newcommand{\xs}{X\backslash S}
\newcounter{casenum}
\begin{document}

\title{Harmonic maps between 2-dimensional simplicial complexes 2}

\author{Brian Freidin\\Auburn University\\{\tt bgf0012@auburn.edu}
\and
Vict\`oria Gras Andreu\\
{\tt victoria.gras.andreu@gmail.com}}

\date{}

\maketitle


\begin{abstract}

We study metrics on two-dimensional simplicial complexes that are conformal either to flat Euclidean metrics or to the ideal hyperbolic metrics described by Charitos and Papadopoulos. Extending the results of our previous paper, we prove existence, uniqueness, and regularity results for harmonic maps between two such metrics on a complex.

\end{abstract}


\section{Introduction}

	This paper is a follow-up to \cite{FG}, and provides a step in the direction of producing Teichm\"uller maps between 2-dimensional simplicial complexes. The motivation for this work comes from \cite{CP}, where Charitos and Papadopoulos introduce a Teichm\"uller space of ideal hyperbolic metrics on a finite 2-dimensional simplicial complex. The Teichm\"uller space in \cite{CP} is parametrized in terms of classical shear coordinates, and compactified in terms of special measured foliations.
We proceed towards the goal of producing Teichm\"uller maps along the scheme proposed by Gerstenhaber and Rauch. In \cite{GR} they conjecture that Teichm\"uller maps between Riemann surfaces, which minimizes the complex dilataion over all maps in their isotopy class, are those that realize the min-max value
\[
\sup_{\rho}\inf_f E_\rho[f].
\]
Here the metrics $\rho$ range over the conformal class of the target surface, the maps $f$ range over a given isotopy class, and the quantity $E_\rho[f]$ is the Dirichlet energy of $f$ measured with respect to the metric $\rho$.
In \cite{K}, Kuwert proves that a Teichm\"uller map between Riemann surfaces is, in fact, harmonic with respect to a particular singular flat metric, thus producing the metric $\rho$ and the minimizing map $f$. Later, in \cite{Me}, Mese resolves the Gerstenhaber-Rauch conjecture by producing the Teichm\"uller map variationally according the min-max principle.
In \cite{FG}, the authors produced harmonic maps between ideal hyperbolic simplicial complexes. The present paper is concerned with producing harmonic maps between a broader class of metrics on a simplicial complex, thus approaching the full variational principle from \cite{GR}. The history of harmonic maps goes back at least as far as \cite{ES}, and harmonic maps between surfaces appear in \cite{J}, \cite{JS}, \cite{SY}. The theory of harmonic maps continues into the study of singular spaces with works including \cite{GS}, \cite{KS1}, \cite{J2}, \cite{EF}, \cite{DM1}, \cite{DM2}, \cite{DM3}.
Our main existence result concerns simplicial maps. A map between simplicial complexes is \emph{simplicial} if it sends vertices to vertices, edges to edges, and faces to faces. We consider the class $W^{1,2}(\rho)$ of simplicial maps $f:(X,\sigma)\to(X,\tau)$ between conformal classes on a complex $X$ with finite energy measured with respect to the metric $\rho$ in the conformal class of $\tau$, and the class $\mathcal{D}(\rho)$, of maps $u\in W^{1,2}(\rho)$ whose restriction to each open face of $X$ is a diffeomorphism. The latter class is motivated by the work of \cite{J}, \cite{JS}.
The class of simplicial maps is well-suited to our application for several reasons. First of all, the higher regularity results we state below rely pretty heavily on our simplicial assumption. Additionally, the classical Teichm\"uller maps turn out to be simplicial. This follows from \cite{K}, where Kuwert proves that the Teichm\"uller map pulls back a holomorphic quadratic differential on the target to one on the domain. To these holomorphic quadratic differentials one can associate a flat metric with cone-type singularities at the singular points of the differential, and find a triangulation of each surface with a set of vertices that contains the cone points. The structure of the Teichm\"uller map, as described in \cite{K}, then reduces to the piecewise affine map between these Euclidean triangulations.
In order to emulate the arguments of \cite{Me}, we must find minimizing mappings for target metrics that are not just Euclidean. Thus we establish the following  existance result.
\begin{theorem*}[c.f. Theorem~\ref{harmonic}]
	There exist energy minimizing mappings $u\in W^{1,2}(\rho)$ and $u_D\in\overline{\mathcal{D}}(\rho)$. That is,
	\[
	E(u) = \inf_{v\in W^{1,2}(\rho)}E(v),
	\]
	and
	\[
	E(u_D) = \inf_{v\in\mathcal{D}(\rho)}E(v).
	\]
\end{theorem*}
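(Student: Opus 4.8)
The plan is to treat each functional by the direct method of the calculus of variations, the genuinely new feature compared to the classical compact smooth setting being the singular combinatorial structure of $X$ together with the possible non-compactness of the target when $\tau$ is the ideal hyperbolic metric. Both admissible classes are non-empty and the infima are finite, since a simplicial self-map of finite energy exists, so fix a minimizing sequence $v_n$ with $E(v_n)\to\inf_{v} E(v)$. The first reduction is to a fixed combinatorial type: a simplicial self-map is determined on the $0$-skeleton, and because $X$ is finite there are only finitely many assignments of vertices to vertices. Hence $v_n$ admits a subsequence along which the underlying simplicial map is constant, so that every $v_n$ sends each vertex, edge, and face of $(X,\sigma)$ to one and the same vertex, edge, and face of $(X,\tau)$.

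Next I would secure the compactness needed for the method. The energies $E(v_n)$ are bounded by construction, and the delicate point — the main obstacle — surfaces precisely when $\tau$ is the ideal hyperbolic metric: the target faces are then ideal triangles with cusps at their ideal vertices, the target is non-compact, and a finite-energy map could a priori push mass arbitrarily far into a cusp, destroying precompactness. The simplicial constraint is what rescues the argument, because vertices map to vertices and edges to edges, so the images are anchored at the cusps in a controlled fashion and the energy bound limits how far the maps can stretch into a cusp. Together with the fixed incidence data this yields a uniform bound for $v_n$ in $W^{1,2}(\rho)$ and precompactness in the appropriate Korevaar--Schoen sense.

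I would then pass to a further subsequence converging weakly in $W^{1,2}$ and strongly in $L^2$, with limit $u$. Weak lower semicontinuity of the Dirichlet energy, which follows from convexity of the integrand in the form developed for maps into singular targets, gives $E(u)\le\liminf_n E(v_n)=\inf_{v} E(v)$. It then remains to check that $u$ is admissible, which for $W^{1,2}(\rho)$ means verifying that the weak limit is again simplicial: the vertex, edge, and face assignments are unchanged along the subsequence, and the matching conditions along shared edges survive passage to the limit by continuity of traces. Hence $u\in W^{1,2}(\rho)$ and is a minimizer.

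Finally, for the diffeomorphism class I would note that $\mathcal{D}(\rho)$ is not weakly closed, since a weak limit of face-wise diffeomorphisms may fold or collapse, and this is exactly why the statement is phrased for the closure $\overline{\mathcal{D}}(\rho)$. Running the identical argument on a minimizing sequence drawn from $\mathcal{D}(\rho)$ produces a limit $u_D\in\overline{\mathcal{D}}(\rho)$; since $\inf_{v\in\mathcal{D}(\rho)}E(v)=\inf_{v\in\overline{\mathcal{D}}(\rho)}E(v)$ and $\overline{\mathcal{D}}(\rho)$ is weakly closed by definition, the limit $u_D$ is the desired minimizer. I expect the compactness step in the cusped hyperbolic case, and the verification that simpliciality together with the edge-matching conditions survives weak convergence, to be where the real work lies.
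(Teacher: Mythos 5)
Your outline is the classical direct method, but it leaves the two hardest steps as assertions, and those are precisely the steps the paper organizes its whole argument around. First, compactness of the minimizing sequence. In the ideal hyperbolic case the target faces have cusps and an energy bound alone does not prevent a minimizing sequence from drifting into a cusp (each edge is a complete geodesic isometric to $\R$, so ``edges map to edges'' anchors nothing); your sentence that ``the simplicial constraint is what rescues the argument'' is exactly the claim that needs a proof, and you do not supply one. Moreover, ``weak convergence in $W^{1,2}$'' is not an available notion here: the target is a metric space, not a vector space, so there is no weak topology on maps to extract a subsequential limit from an energy bound. The paper avoids both problems by never invoking weak compactness. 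It fixes a compact exhaustion $X_n$ of $\xs$, solves a Dirichlet problem on each $X_n$ with boundary data from a fixed finite-energy map $G$ (Theorems~\ref{dirichletW} and \ref{dirichletD}), and there the NPC geometry does the work: the midpoint interpolation $u_{k,\ell}$ together with the quadrilateral inequality \eqref{bound} and the Poincar\'e inequality (Theorem~\ref{poincare}) shows the minimizing sequence is \emph{strongly} $L^2$-Cauchy, after which lower semicontinuity of energy finishes the local problem. Prescribing the boundary values equal to $G$ outside $G^{-1}(X_n)$ is also what pins the maps away from the cusps at each stage.

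Second, even granting a limit, you need it to be attained globally and to be admissible; the paper gets this from the interior Lipschitz estimate of Theorem~\ref{local lip} (proved via the Hopf differential balancing and edge estimates of Theorems~\ref{3.9} and \ref{3.10} near edges, and Korevaar--Schoen interior estimates inside faces), which gives equicontinuity on compacta and lets a diagonal argument over the exhaustion produce the global minimizers, following Theorem 26 of \cite{FG}. Your proposal contains no analogue of the harmonic replacement or the a priori gradient bound, so there is no mechanism to pass from local to global or to control the sequence pointwise. A small additional point: in this paper a simplicial map sends each face $T$ to the \emph{same} face $T$ and fixes each vertex, so your opening reduction to a subsequence of constant combinatorial type is vacuous --- there is only one combinatorial type.
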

	
We also prove regularity of the above mentioned minimizing maps. In the case of surfaces, the regularity of harmonic maps follows from elliptic theory, see for instance the work of Morrey \cite{M1}, \cite{M2}. In the case of singular spaces, \cite{GS} and \cite{KS1} proves H\"older and Lipschitz continuity of maps with singular targets. When both domain and target are singular \cite{EF} proves H\"oler continuity. We prove the following results.
\begin{theorem*}[c.f. Theorem~\ref{global lip}]
	Let $u\in W^{1,2}(\rho)$ and $u_D\in\overline{\mathcal{D}}(\rho)$ be the energy minimizing maps constructed in Theorem~\ref{harmonic}.
	Then in any compact subset $V$ of $\xs$ there is a constant $C$ depending only on $V$ so that
	\[
	\abs{\nabla u}^2(p)\leq CE(u)\quad\text{ and }\quad\abs{\nabla u_D}^2(p)\leq CE(u_D)
	\]
	in $V$. As a result $u$ and $u_D$ are locally Lipschitz continuous, with Lipschitz constant at a point $p\in \xs$ depending only on the energy of $u$ (resp. $u_D$) and the distance of $p$ from the center of any face in which $p$ lies.
\end{theorem*}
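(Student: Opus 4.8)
The plan is to run the classical Bochner argument of Eells--Sampson and Schoen--Yau on the interior of a single face, and then to propagate the resulting subharmonicity across open edges using the balancing condition satisfied by the minimizers. The estimate is interior, so fix $p\in\xs$ and let $F$ be an open face whose closure contains $p$, with corresponding target face $F'$ carrying the smooth metric $\rho$; since $u$ and $u_D$ are simplicial they map $F$ into $F'$, so on $F$ we are looking at an energy minimizer between two smooth Riemannian surfaces. The first step is to record interior smoothness: on $F$ both minimizers are (unconstrained) energy minimizers, hence solve the harmonic map system between smooth surfaces, and classical elliptic regularity (Morrey, \cite{M1}, \cite{M2}) upgrades them from $W^{1,2}$ to smooth maps there and legitimizes the pointwise identities below.

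The heart of the argument is the subharmonicity of the energy density. Choosing a conformal coordinate $z$ on $F$ with $\sigma=\lambda^2\abs{dz}^2$, I would set $\mathcal H=\abs{\partial_z u}_\rho^2$ and $\mathcal L=\abs{\partial_{\bar z}u}_\rho^2$, so that $\abs{\nabla u}^2=2\lambda^{-2}(\mathcal H+\mathcal L)$. Where $\mathcal H,\mathcal L>0$ the harmonic map equation yields the Bochner identities
\[
\Delta_0\log\mathcal H=-2K_\rho(\mathcal H-\mathcal L),\qquad \Delta_0\log\mathcal L=2K_\rho(\mathcal H-\mathcal L),
\]
with $\Delta_0=4\,\partial_z\partial_{\bar z}$ the flat Laplacian and $K_\rho$ the Gauss curvature of $\rho$. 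Adding the expansion $\Delta_0\mathcal H=\mathcal H\,\Delta_0\log\mathcal H+\abs{\nabla_0\mathcal H}^2/\mathcal H$ to its analogue for $\mathcal L$ gives
\[
\Delta_0(\mathcal H+\mathcal L)=-2K_\rho(\mathcal H-\mathcal L)^2+\frac{\abs{\nabla_0\mathcal H}^2}{\mathcal H}+\frac{\abs{\nabla_0\mathcal L}^2}{\mathcal L}.
\]
When $\rho$ is flat or ideal hyperbolic, so $K_\rho\le 0$, every term on the right is nonnegative and $e:=\mathcal H+\mathcal L$ is subharmonic with respect to $\Delta_0$ (with no orientation hypothesis, so that $u$ and $u_D$ are treated uniformly, the subharmonicity extending across the isolated zeros of $\mathcal H\mathcal L$ by continuity). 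For a general representative of the conformal class one instead retains the differential inequality $\Delta_0 e\ge -2\abs{K_\rho}e^2$ and argues by $\varepsilon$-regularity and Moser iteration.

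With subharmonicity in hand I would invoke the sub-mean-value inequality: for the nonnegative subharmonic $e$ on a Euclidean disc $B_r(p)\subset F$,
\[
e(p)\le\frac{1}{\pi r^2}\int_{B_r(p)}e\,dx\le\frac{C}{r^2}\,E(u),
\]
the last step because $\int_{B_r(p)}e\,dx\le\int_F e\,dx=E(u;F)\le E(u)$ by the conformal invariance of the energy. Since $\lambda$ is bounded above and below on any compact $V\subset\xs$, this converts to $\abs{\nabla u}^2(p)\le C r^{-2}E(u)$; taking $r$ comparable to the distance from $p$ to the singular set $S$---which in the interior of $F$ is the distance to its center, the statement's scale---yields the asserted bound with $C=C(V)$. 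For $p$ on an open edge I would propagate the estimate from the adjacent faces using the balancing condition at edges, reflecting $e$ so that the sub-mean-value property still holds on discs straddling the edge; this is the point at which the distance to the center of \emph{each} face meeting at $p$ enters. Finally, a pointwise bound on $\abs{\nabla u}$ over $V$ gives local Lipschitz continuity with the stated constant by integrating $\abs{\nabla u}$ along paths that stay in the smooth locus.

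I expect the main obstacle to be the subharmonicity step together with its propagation across edges. Two features make it delicate: the target is only \emph{conformal} to a nonpositively curved model, so in general the curvature term $-2K_\rho(\mathcal H-\mathcal L)^2$ must be absorbed by an $\varepsilon$-regularity/Moser iteration rather than by bare subharmonicity; and the domain conformal factor $\lambda$ degenerates at the center of each face, which forbids enlarging the disc $B_r(p)$ past that center and is exactly what forces the Lipschitz constant to depend on the distance from $p$ to the center. Verifying that the edge-balancing condition lets one reflect the subharmonic energy density across the open edges, so that the interior estimate extends to all of $\xs$, is the remaining technical crux.
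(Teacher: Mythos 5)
Your interior argument is a legitimate alternative to what the paper does: on the open part of each face the paper quotes the Korevaar--Schoen interior Lipschitz estimate (Theorem 2.4.6 of \cite{KS1}) for minimizers into NPC targets, whereas you derive subharmonicity of the energy density from the Bochner identity. That route does work here, and in fact more cleanly than you allow: condition \ref{curv diffeq} of Definition~\ref{conformal} forces $K_{\rho^2\tau}\le 0$ on every face (Proposition~\ref{conformal curvature}), so the curvature term in $\Delta_0(\mathcal H+\mathcal L)=-2K_\rho(\mathcal H-\mathcal L)^2+\abs{\nabla_0\mathcal H}^2/\mathcal H+\abs{\nabla_0\mathcal L}^2/\mathcal L$ is always of the good sign and no $\varepsilon$-regularity or Moser iteration is ever needed. (A side remark: the domain conformal factor does not degenerate at the ``center'' of a face; for the ideal hyperbolic triangle it degenerates only at the cusps, so the restriction on the radius $r$ comes from the distance to $\partial(\mathrm{st}(p))$, not from any interior point.)

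The genuine gap is at the edges, and it is precisely the step you defer. Reflecting the subharmonic density $e=\mathcal H+\mathcal L$ across an edge does not work as stated: first, an edge of $X$ may be incident to $N\ge 3$ faces, so there is no literal reflection to perform; second, even for $N=2$ the balancing condition controls the normal derivative of the \emph{tangential component} of the map (equivalently, the imaginary part of the summed Hopf differential on the axis, Theorem~\ref{3.9}), not the normal derivative of $e$, so the evenly reflected $e$ has no reason to remain subharmonic across the edge. The argument that actually closes this (and that the paper imports, via Theorem~\ref{3.10}, from Theorems 3.9 and 3.10 of \cite{DM1}) is structured differently: one shows that $\sum_{j=1}^N\varphi_j$ extends holomorphically across the edge by Schwarz reflection because $\mathrm{Im}\sum_j\varphi_j=0$ there, and separately that the tangential energy $\sum_j\abs{\partial f_j/\partial y}^2$ (which is single-valued on the edge) extends subharmonically by even reflection; the normal energies $\abs{\partial f_j/\partial x}^2$ are then recovered from $\abs{\partial f_j/\partial y}^2+\mathrm{Re}\,\varphi_j$ and estimated by the mean value property of the holomorphic extension. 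Without carrying out this decomposition (or an equivalent substitute), the edge estimate --- and hence the global bound on any compact $V\subset\xs$ meeting the $1$-skeleton --- is not established, and this is the part of the theorem that does not follow from single-face interior theory.
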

\begin{theorem*}[c.f. Theorem~\ref{interior smooth}]
    Let $u\in W^{1,2}(\rho)$ and $u_D\in\overline{\mathcal{D}}(\rho)$ be the energy minimizing maps constructed in Theorem~\ref{harmonic}.  Suppose the metric $\rho^2\tau$ is $C^k$, with $3\leq k\leq\infty$. Then for any face $T$ of $X$, the restrictions $u\vert_T$ and $u_D\vert_T$ to the interior of $T$ are $C^{k-1}$ maps. If $\rho^2\tau$ is analytic, then $u\vert_T$ and $u_D\vert_T$ are analytic in the interior of each $T$. Moreover the map $u_D$ is a diffeomorphism on the interior of each face of $X$.
\end{theorem*}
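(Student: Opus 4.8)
The plan is to argue one face at a time and to treat each restriction as a classical harmonic map between smooth Riemannian surfaces. By Theorem~\ref{global lip} the maps $u$ and $u_D$ are locally Lipschitz on $\xs$, so on the interior of a face $T$ each is a finite-energy, locally Lipschitz map landing in the interior of a single target face. Because $u$ and $u_D$ minimize energy, inner variations supported in $\mathrm{int}(T)$ respect both the $W^{1,2}(\rho)$ and the $\overline{\mathcal{D}}(\rho)$ constraints, so each restriction is weakly harmonic there and satisfies the standard harmonic map system. Choosing an isothermal coordinate $z$ on the domain, for which the energy is conformally invariant and the domain Laplacian is the flat one, the system reads
\[
\Delta u^i = -\Gamma^i_{jk}(u)\,\nabla u^j\cdot\nabla u^k ,
\]
where the $\Gamma^i_{jk}$ are the Christoffel symbols of the target metric $\rho^2\tau$; since $\rho^2\tau$ is $C^k$, these coefficients are $C^{k-1}$.

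First I would upgrade the starting regularity. The Lipschitz bound gives $\nabla u\in L^\infty_{loc}$, so the right-hand side lies in $L^p_{loc}$ for every $p$; $L^p$ elliptic theory yields $u\in W^{2,p}_{loc}$ for all $p$, hence $u\in C^{1,\alpha}_{loc}$ for every $\alpha<1$. From here the argument is a standard Schauder bootstrap: differentiating the system, each derivative of $u$ solves a linear elliptic equation whose coefficients are built from $\Gamma$, $\Gamma'$, and already-controlled derivatives of $u$, and interior Schauder estimates gain one order of H\"older regularity per step. The iteration proceeds until it is capped by the $C^{k-1}$ regularity of the coefficients, yielding $u|_{\mathrm{int}(T)}\in C^{k-1}$ and likewise for $u_D$. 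In the analytic case the target Christoffel symbols are analytic, the system becomes an analytic elliptic system with an already-smooth solution, and Morrey's analyticity theorem for nonlinear elliptic systems gives analyticity of both restrictions.

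The main work is the final claim that $u_D$ is a diffeomorphism on each open face; here the conformal factor $\rho$ may make $\rho^2\tau$ positively curved, so no target curvature sign is available and a direct convexity or uniqueness argument fails. Instead I would use the complex-analytic structure special to surfaces together with membership in $\overline{\mathcal{D}}(\rho)$. In an isothermal coordinate $z$ on the domain and a conformal coordinate on the target, the harmonic map equation forces the Hopf differential $\Phi=\big\langle u_z,u_z\big\rangle\,dz^2$ to be holomorphic, and the holomorphic and antiholomorphic energy densities satisfy Bochner-type identities of the form
\[
\Delta\log\abs{\partial u}^2 = 2K_\sigma - 2K_{\rho^2\tau}\,J ,\qquad J = \abs{\partial u}^2-\abs{\bar\partial u}^2 ,
\]
valid away from the zeros of $\partial u$. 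By the Carleman similarity principle these identities show that, unless $\partial u$ or $\bar\partial u$ vanishes identically, its zeros are isolated and of integer order, so the Jacobian $J$ has only isolated interior zeros, each forcing a branch point or orientation reversal.

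Finally I would exclude such zeros using minimality of $u_D$ in $\overline{\mathcal{D}}(\rho)$. The competitors in $\mathcal{D}(\rho)$ restrict to orientation-preserving diffeomorphisms of each open face, so $u_D$ inherits degree $\pm1$ and monotone boundary behavior on each face; at an interior zero of $J$ the Schoen--Yau and Jost--Schoen cut-and-paste construction produces a competitor in $\overline{\mathcal{D}}(\rho)$ of strictly smaller energy, contradicting minimality. Hence $J$ keeps a fixed sign and never vanishes, $du_D$ is nonsingular on $\mathrm{int}(T)$, and combined with the $C^{k-1}$ regularity and the boundary/degree normalization the restriction $u_D|_{\mathrm{int}(T)}$ is a proper local diffeomorphism of degree $\pm1$, hence globally injective, i.e.\ a diffeomorphism onto the interior of its image face. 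I expect this nonvanishing-Jacobian step---converting variational minimality in $\overline{\mathcal{D}}(\rho)$ into the exclusion of interior branch points without a target curvature hypothesis---to be the principal obstacle.
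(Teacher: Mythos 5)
The interior regularity half of your argument is essentially the paper's: the paper also observes that on any disc compactly contained in a face the restrictions minimize with respect to their own boundary values, hence are harmonic, gets $C^2$ from Morrey (Theorem 3.2 of \cite{M1}), and then bootstraps via Theorem 6.8.1/6.8.2 of \cite{M2} using the fact that the coefficients of the harmonic map system are the Christoffel symbols of $\rho^2\tau$, hence $C^{k-1}$ (resp.\ analytic). Your explicit $L^p\to W^{2,p}\to C^{1,\alpha}\to$ Schauder chain is a fine substitute for those citations.

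The diffeomorphism step is where there are problems. First, your premise that ``no target curvature sign is available'' is a misreading of the setup: condition~\ref{curv diffeq} of Definition~\ref{conformal} is imposed precisely so that, by Proposition~\ref{conformal curvature}, the metric $\rho^2\tau$ has non-positive Gauss curvature on every face. The paper exploits exactly this by invoking the second half of Theorem 31 of \cite{FG}, which is the Schoen--Yau/Jost--Schoen argument for harmonic maps into non-positively curved surfaces: for $u_D\in\overline{\mathcal{D}}(\rho)$ one has $J\ge 0$, the identity $\Delta\log\bigl(\abs{\bar\partial u}/\abs{\partial u}\bigr)=2K_{\rho^2\tau}J\le 0$ and the maximum principle force the complex dilatation to have modulus strictly less than $1$ unless the map degenerates, and the degree-one normalization rules out zeros of $\partial u$. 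Second, your replacement argument has a genuine gap: from the fact that $\partial u$ and $\bar\partial u$ each have isolated zeros you conclude that $J=\abs{\partial u}^2-\abs{\bar\partial u}^2$ has isolated zeros, but this implication is false --- the set where $\abs{\partial u}=\abs{\bar\partial u}\ne 0$ need not be discrete, so without the curvature sign (or some substitute) you cannot reduce to ``isolated branch points'' and the cut-and-paste competitor construction never gets off the ground. Once you use the non-positive curvature that the paper has already arranged, the difficulty you flag as the ``principal obstacle'' disappears and the argument becomes the standard one.
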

\begin{theorem*}[c.f. Corollary~\ref{all the way to boundary regularity}]
Let $u:(X\backslash S,\sigma)\to(X\backslash S,\rho^2\tau)$ be a harmonic map. If $\rho^2\tau$ is $C^k$, then the restriction of $u$ to the closure of each face is $C^{k-1}$. If $\rho^2\tau$ is analytic, then the restriction of $u$ to the closure of each face is analytic.
\end{theorem*}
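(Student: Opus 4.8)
The plan is to combine the interior regularity of Theorem~\ref{interior smooth} with a boundary analysis along the open edges, since the closure of a face in $\xs$ is the union of its interior with the open edges in its boundary. Fix a point $p$ lying on an open edge $e$, let $T_1,\dots,T_m$ be the faces of $X$ containing $e$ in their boundary, write $e'$ for the image edge, and set $u_i=u\vert_{T_i}$. First I would introduce conformal coordinates: on each $T_i$ choose isothermal coordinates $(x,y)$ in which a neighborhood of $p$ becomes a half-disk $\{y\geq 0\}$ with $e\subset\{y=0\}$, and on the target face put isothermal coordinates $(U,V)$ with $e'\subset\{V=0\}$ and $\rho^2\tau=\lambda(U,V)(dU^2+dV^2)$, where $\lambda$ is $C^{k}$ (resp. analytic). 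In these coordinates each $u_i=(U_i,V_i)$ solves the harmonic map system $\Delta u_i+\Gamma(u_i)(\nabla u_i,\nabla u_i)=0$, an elliptic system whose coefficients, the Christoffel symbols of $\rho^2\tau$, are $C^{k-1}$ (resp. analytic). By the local Lipschitz continuity established above up to the open edges, $\nabla u_i\in L^\infty$ near $p$, the $u_i$ share a common continuous trace on $e$ mapping into $e'$, and hence $V_i=0$ on $\{y=0\}$.

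Next I would record the boundary conditions along $e$. Because $u$ is a critical point of the energy on all of $X$, its first variation splits into the interior Euler--Lagrange system above and a boundary term over $e$; admissible variations are continuous across $e$ and tangent to $e'$ there. This forces, for the component normal to $e'$, the Dirichlet condition $V_i=0$, and for the tangential component the \emph{transmission} (balancing) condition that the traces $U_i\vert_e$ agree and that the weighted conormal derivatives satisfy $\sum_{i=1}^m\lambda(u_i)\,\partial_{\nu_i}U_i=0$ on $e$. Thus near $p$ the tuple $(u_1,\dots,u_m)$ solves an elliptic boundary value problem on the fan of half-disks, with Dirichlet data on the normal components and a coupled transmission/Neumann condition on the tangential components.

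The regularity then follows from boundary elliptic theory applied to this problem. The decoupled Dirichlet conditions $V_i=0$ together with the coupled continuity-plus-balancing conditions on the $U_i$ satisfy the complementing (Lopatinski--Shapiro) condition, so the configuration is a regular elliptic boundary value problem; in the model case $m=2$ with metric even across $e'$ it reduces to the classical even/odd reflection. Starting from the Lipschitz solution and the $C^{k-1}$ coefficients, boundary Schauder estimates upgrade each $u_i$ first to $C^{1,\alpha}$ up to $e$ and then, by bootstrapping, to $C^{k-1}$ up to $e$. In the analytic case, analytic hypoellipticity up to the boundary (in the sense of Morrey) applied to the same problem yields analyticity of each $u_i$ up to $e$. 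Since $p$ was an arbitrary point of an arbitrary open edge and interior regularity is given by Theorem~\ref{interior smooth}, the restriction of $u$ to the closure of each face, with the vertices $S$ removed, is $C^{k-1}$ (resp. analytic).

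The main obstacle is the edge analysis, in particular the non-manifold geometry. The target is singular precisely along $e'$, so $\lambda$ need not be symmetric across $\{V=0\}$ and a naive single-face reflection fails; the remedy is to separate the normal component, which carries honest Dirichlet data and can be treated by odd reflection, from the tangential component, which is governed by the balancing condition and must be handled as a genuinely coupled transmission problem for all $m$ faces at once. Verifying the complementing condition for this coupled system and carrying the Schauder bootstrap through uniformly up to $e$ — together with ruling out any distributional contribution concentrated on $e$, which is exactly what the balancing condition ensures — is the technical heart of the argument.
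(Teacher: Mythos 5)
Your proposal is correct and follows essentially the same route as the paper: interior regularity is quoted from Theorem~\ref{interior smooth}, and regularity up to each open edge is obtained from the Dirichlet condition on the component normal to the edge together with the first-variation (balancing) condition on the tangential components, followed by an elliptic bootstrap and Morrey's theorem in the analytic case. The only real difference is one of packaging: where you set up a coupled transmission boundary value problem on the fan of half-discs and appeal to the Lopatinski--Shapiro condition, the paper realizes exactly that structure concretely through the reflected functions $\psi^\alpha$ already constructed in the proof of Proposition~\ref{c1beta} (odd reflection for the normal component, a balanced-average reflection for the tangential one, with the balancing condition ruling out a distributional contribution on the edge) and then cites the bootstrapping argument of \cite{DM3}, Corollary 6.
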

The paper is organized as follows. In Section~\ref{SBack} we introduce the spaces and classes of metrics with which the remainder of the paper is concerned. In Section~\ref{SFinite} we define our function spaces and produce finite energy maps $f\in \mathcal{D}(\rho)$ for any complex $X$ and metric $\rho$ introduced in the previous section.
In Section~\ref{SExist} we describe a harmonic replacement strategy and prove enough local regularity to carry out a minimization procedure and prove Theorem~\ref{harmonic}. In Section~\ref{SProp} we list properties of the minimizing maps, including the regularity discussed above and some topological properties.

In Section~\ref{SSing} we study degenerations of the metrics we introduced in Section~\ref{SBack} to metrics that are not smooth in each face of $X$. In particular, we allow cone-type singularities within the faces of $X$. We establish existence and Lipschitz regularity of harmonic maps into complexes with these degenerate metrics. In line with the observation that, indeed, classical Teichm\"uller maps between surfaces are simplicial maps with respect to suitably chosen triangulations, expanding the class of conformal metrics to include additional conical singularities makes the approach more flexible.
And in the last section we construct minimizing homogeneous approximations to the minimizing maps constructed earlier near the edges and vertices of the complex. The degree of homogeneity of the approximating map near $p$ is called the order of $u$ at $p$, and the main result of the last section establishes that for all of the minimizing maps we construct, the order does not vanish along the open edges of $X$.

\section{Background}\label{SBack}

\subsection{Simplicial Complexes}

\begin{definition}
	A \textit{2-dimensional simplicial complex} is a topological space $X$ together with two finite or infinite sets $\mathcal{C}$ and $\mathcal{F}$ that satisfy the following properties:
	\begin{enumerate}
		\item Each $T\in\mathcal{C}$ is a topological triangle. 
		That is, a topological space homeomorphic to a 2-dimensional closed disc with three distinguished points on the boundary called vertices. 
		The edges of $T$ are the closed segments of $\partial T$ bounded by two vertices and not containing the third.
		
		\item $\mathcal{F}$ is a maximal collection of homeomorphisms $f:A\to B$ where $A\subset T$ and $B\subset T'$ are distinct edges of (possibly identical) triangles $T,T'\in\mathcal{C}$ (along with the identity map on each edge of each triangle). For two edges $A,B$, there should be at most one map $f:A\to B$ in $\mathcal{F}$. The collection $\mathcal{F}$ is maximal with respect to two conditions. First, if $f:A\to B$ is in $\mathcal{F}$, then so is $f^{-1}:B\to A$. Second, if $f:A\to B$ and $g:B\to C$ are in $\mathcal{F}$, then so is $g\circ f:A\to C$.
		
		The elements of $\mathcal{F}$ are called gluing maps.
		
		\item As a topological space, $X$ is the quotient of the disjoint union $\coprod_{\mathcal{C}}T$ of the triangles in $\mathcal{C}$ by the equivalence relation identifying $x\in A$ with $f(x)\in B$ for each $f:A\to B\in\mathcal{F}$.
		
		Let $\pi:\coprod_{\mathcal{C}}T\to X$ be the quotient map. The fact that each $f\in\mathcal{F}$ is a homeomorphism and no two maps in $\mathcal{F}$ have the same domain and range implies that $\pi$ is injective on each edge of each triangle in $\mathcal{C}$. The image $\pi(T)$ of a triangle $T\in\mathcal{C}$ is called a \textit{face} of $X$, the image $\pi(e)$ of an edge $e$ of $T$ is called an \textit{edge} of $X$, and the image $\pi(v)$ of a vertex $v$ of $T$ is called a \textit{vertex} of $X$.
		
		We will also always impose an orientation on each edge of $X$. Pulling back by $\pi$, this puts an orientation on each edge $e$ of each triangle $T\in\mathcal{C}$ in such a way that the gluing maps $f\in\mathcal{F}$ are orientation-preserving homeomorphisms.

		\item $X$ is path connected.
		
		\item $X$ is locally finite. That is, each edge and each vertex is incident to finitely many faces.
	\end{enumerate}
\end{definition}

We list now some other notions regarding simplicial complexes.

\begin{remark}
	A simplicial complex is said to be \textit{(locally) 1-chainable} if the complement of the vertices, $\xs$, is (locally) connected. By the construction of our complexes, since faces are glued to other faces only along edges (and not at vertices alone), our complexes will satisfy this condition.
\end{remark}

\begin{definition}
	A 2-dimensional simplicial complex is \textit{finite} if the number of faces is finite. 
\end{definition}

We will always assume that our complexes are finite.

\begin{definition}
	Let $X$ be a 2-dimensional simplicial complex. 
	An edge $e$ of $X$ is called \textit{singular} if $e$ is incident to at least three faces. That is, the preimage of $e$ by the quotient map $\pi$ consists of three or more distinct edges.
\end{definition}

\begin{figure}[ht]
	\centering
	\includegraphics[width=0.5\textwidth]{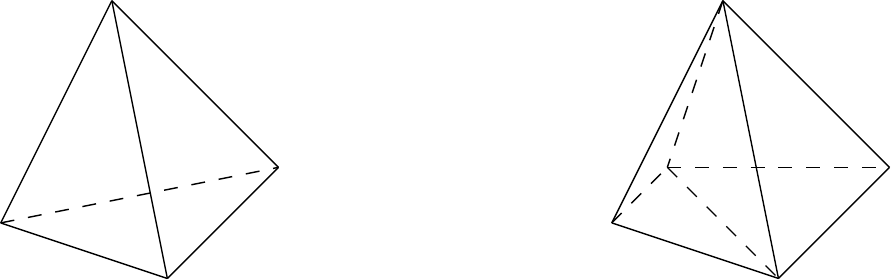}
	\caption{2-dimensional simplicial complexes with (right) and without (left) singular edges}
\end{figure}

\subsection{Metrics}\label{Smetrics}

We want to endow a 2-dimensional simplicial complex with a Riemannian metric. Since our objects are not smooth manifolds we will need to use a more general definition of a Riemannian metric for a complex. 

In what follows, we will assume that our complexes are finite.

\begin{definition}\label{RM}
	Let $X$ be a 2-dimensional simplicial complex with $\mathcal{C}=\{T_i\}_{i\in I}$ its set of triangles and $S$ its set of vertices.
	We'll say that $\sigma$ is a \textit{Riemannian metric} on $X$ (resp. $X\backslash S$) if
	\begin{itemize}
		\item the restriction of $\sigma$ to each closed face $\sigma\vert_{\bar{T}}$ (resp. $\sigma\vert_{\bar{T}\backslash S}$), is a Riemannian metric in the usual sense, and
		\item the restriction of $\sigma$ to each edge is well defined, i.e. if $T_1,\ldots,T_n$ meet at an edge $e$ and $\sigma_i=\sigma\vert_{\bar{T}_i}$ denotes the various restrictions, then $\sigma_i\vert_e=\sigma_1\vert_e$ for $i=2,\ldots,n$.
	\end{itemize}
\end{definition}

\begin{remark}
	Given a Riemannian metric $\sigma$ on $X$ (resp $\xs$), we induce a distance function on $X$ (resp. $\xs$) as follows. Given two points $x$ and $y$, any piecewise $C^1$ curve joining them has a length as measured by the Riemannian metric in each face of $X$. The distance between $x$ and $y$ is the infimum of the lengths of such curves.
\end{remark}

Two special classes of metrics we'll consider are Euclidean metrics on the compact simplicial complex and ideal hyperbolic metrics on the complex punctured at its vertices. We will also consider metrics conformal to these two special classes, which may or may not have additional conical singularities. We describe each of these different types of metrics below, along with their curvature properties. Towards this last goal, we give here a notion of curvature on singular spaces.

\begin{definition}
	Let $(Y,d)$ be a complete metric space. We say $(Y,d)$ is CAT($\kappa$), or that $(Y,d)$ has {\it curvature bounded from above by} $\kappa$,  if the following conditions are satisfied:
	\begin{enumerate}[(i)]
		\item $(Y, d)$ is a length space. 
		That is, for any two points $P, Q\in Y$, the distance $d(P,Q)$, which for simplicity we will also write as $d_{PQ}$, is realized as the length of a rectifiable curve $\gamma_{PQ}$ connecting $P$ to $Q$.
		We call such a curve a geodesic.
		\item Let $a=\sqrt{\abs{\kappa}}$. 
		Let $P, Q, R\in Y$ (assume $d_{PQ}+d_{QR}+d_{RP}<\frac{\pi}{\sqrt{\kappa}}$ for $\kappa>0$) with $Q_t$ defined to be the point on the geodesic $\gamma_{QR}$ satisfying $d_{QQ_t}=td_{QR}$ and $d_{Q_tR}=(1-t)d_{QR}$ Then we have
		\[
		\cosh(ad_{PQ_t})\leq \dfrac{\sinh((1-t)ad_{QR})}{\sinh(ad_{PQ})}\cosh(ad_{PQ})+\dfrac{\sinh(tad_{QR})}{\sinh(ad_{QR})}\cosh(ad_{PR})
		\]
		for $\kappa<0$,
		\[
		d^2_{PQ_t}\leq (1-t)d^2_{PQ}+td^2_{PR}-t(1-t)d^2_{QR}.
		\]
		for $\kappa=0$, and
		\[
		\cos(ad_{PQ_t})\geq \dfrac{\sin((1-t)ad_{QR})}{\sin(ad_{QR})}\cos(ad_{PQ})+\dfrac{\sin(tad_{QR})}{\sin(ad_{QR})}\cos(ad_{PR})
		\]
		for $\kappa>0$.
	\end{enumerate}
	
	If $(Y,d)$ does not necessarily satisfy the CAT($\kappa$) conditions, but every point $y\in Y$ has a neighborhood that does, we say $(Y,d)$ is {\it locally} CAT($\kappa$).
\end{definition}

If a space has curvature bounded above by $0$ we also refer to it as non-positively curved (NPC). The definition of CAT($\kappa$) given above, at least the local version, models the notion of sectional curvatures bounded above by $\kappa$ in a Riemannian manifold.

As a consequence of (ii), geodesics in an NPC space are unique, see for instance \cite{KS1}. 
It is also a well-known fact that if $Y$ is locally compact and NPC, then $Y$ is simply connected. 
Conversely, if $Y$ is a complete simply connected Riemannian manifold with non-positive sectional curvature, then $Y$ is an NPC space with the length metric induced from the Riemannian metric.

Furthermore, it is known that any CAT($\kappa$) space is also a CAT($\ell$) space for $\ell\geq\kappa$.

\subsubsection{Euclidean metrics}

To define Euclidean metrics on the compact simplicial complex $X$, we identify each triangle $T\in\mathcal{C}$ with a Euclidean triangle in such a way that we will induce a well-defined metric on each edge of $X$. Each edge of a Euclidean triangle is isometric to a segment in $\R$, which is characterized up to isometry by its length. So whenever $f:A\to B\in\mathcal{F}$ is a gluing map, $f$ is an isometry. That is, whenever two triangles are glued together along an edge, the corresponding edges must have the same length.

So at the very least we must specify a length function $\ell:E[X]\to\R_+$ from the set of edges of $X$ to the positive real numbers. And from elementary Euclidean geometry, a Euclidean triangle is uniquely determined (up to isometry) by a triple $(a,b,c)$ of edge lengths satisfying the triangle inequalities: $a<b+c$, $b<c+a$, and $c<a+b$. Once we have a length function satisfying these inequalities on each triangle, we can identify each triangle $T\in\mathcal{C}$ with a Euclidean triangle with the appropriate edge lengths, and use this identification to induce a metric on $T$. Moreover, these metrics on the triangles of $\mathcal{C}$ glue together and descend to a metric $\sigma$ on $X$ so that each face of $X$ is isometric to a Euclidean triangle.

Each Euclidean metric on $X$ is characterized by $E$ pieces of data, where $E=\#E[X]$ is the number of edges in the complex, so the space of such metrics can be identified with a subset of $\R_+^E$. Each triangle inequality restricts us to an open subset of $\R_+^E$. Since the length function $\ell\equiv 1$, identifying each face with an equilateral triangle, describes a metric satisfying all of the triangle inequalities, our space of Euclidean metrics is parametrized by a non-empty open subset of $\R_+^E$, and thus has dimension $E$.

It will often be convenient to work in coordinates. Given a Euclidean metric on $X$, and a triangle $T\in\mathcal{C}$, we can construct an isometry $\phi:\bar{T}\to\Delta$ to some triangle $\Delta\subset\R^2$. We use such a homeomorphism to induce coordinates in $\bar{T}$. If we have a chosen edge $A$ of $T$, we can arrange that $\Delta$ has one side along the $y$ axis and the remaining vertex in the right half-plane. In this case we introduce the notation $\Delta_{abc}$ to denote the Euclidean triangle with vertices $(0,0)$, $(0,a)$, and $(b,c)$ in $\R^2$.

\begin{definition}[Local model of an edge]\label{local edge euc}
Let $\sigma$ be a Euclidean metric on $X$. Let $e$ be an edge of $X$ with length $a$ and let $T_j$ enumerate the faces of $X$ incident to $e$. A system of isometries $\{\phi_j:T_j\to\Delta_{ab_jc_j}\}$ is called a {\it local model for $e$} if
\begin{enumerate}
    \item $b_j>0$ for all $j$ (hence $\phi_j(T_j)$ lies in the right half-plane for each $j$),
    \item $\phi_j(e)$ is the segment on the $y$-axis joining $(0,0)$ and $(0,a)$, and
    \item $\phi_j\vert_e=\phi_k\vert_e$ for all $j,k$.
\end{enumerate}
\end{definition}

In the last section we will also require coordinates suited to working near a vertex $v\in X$, for which we introduce the following model.

\begin{definition}[Local model of a vertex]\label{local vertex euc}
Let $\sigma$ be a Euclidean metric on $X$ and let $v\in X$ be a vertex. Enumerate the faces $T_j$ incident to $v$. A system of isometries $\{\phi_j:T_j\to\Delta_j\}$ is called a {\it local model for $v$} if
\begin{enumerate}
    \item Each $\Delta_j$ has a vertex at $(0,0)$, and $\phi_j(v)=(0,0)$ for each $j$
    \item Denoting by $e_j$ the instance of $e$ in the triangle $T_j$, if $f:e_j\to e_k\in\mathcal{F}$ then $\phi_j\vert_{T_j\cap T_k} = R\circ\phi_k\vert_{T_j\cap T_k}$, for some rotation $R$ about the origin.
\end{enumerate}
\end{definition}

\begin{proposition}
	Let $\sigma$ be one of the Euclidean metrics on $X$ described above. Then $(\xs,\sigma)$ is a locally CAT(0) space.
\end{proposition}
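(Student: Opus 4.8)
The plan is to verify the condition pointwise, since local CAT(0) only requires each point of $\xs$ to possess a single CAT(0) neighborhood. I would sort the points $p\in\xs$ according to the number $n$ of faces of $X$ meeting at $p$: either $p$ lies in the interior of a face (a genuine $2$-manifold point), or $p$ lies in the interior of an edge $e$ incident to $n$ faces. Since $p\notin S$, I may choose $r>0$ smaller than the distance from $p$ to $S$ and to every edge not containing $p$, so that the closed ball $\overline{B}(p,r)$ meets at most the single edge through $p$. For a face-interior point this ball is isometric to a flat Euclidean disc. For an edge point, the local model of Definition~\ref{local edge euc} aligns all incident faces along a common spine (the $y$-axis, with the faces in the right half-plane), and identifies $\overline{B}(p,r)$ isometrically with the closed $r$-ball about a spine point of the \emph{book} $B_n$ obtained by gluing $n$ Euclidean half-planes along their common boundary line.

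When $n\le 2$ the book $B_n$ is a Euclidean half-plane or the full plane, so $\overline{B}(p,r)$ is a closed half-disc or disc; being a convex subset of $\R^2$ it is CAT(0), and these cases, together with the face-interior case, are immediate. The substance of the argument is the singular case $n\ge 3$. Here I would exhibit an isometry between $B_n$ and the metric product $\R\times T_n$, where $T_n$ is the metric tree consisting of $n$ rays issuing from a single point $o$; the $\R$ factor records the signed coordinate $s$ along the spine, and $T_n$ records the transverse geometry. Explicitly, the point at spine-coordinate $s$ and transverse distance $t$ inside the $j$-th page maps to $(s,(j,t))$. Checking that this is an isometry reduces to one unfolding computation: a shortest path between points in two distinct pages must cross the spine, and reflecting one page across the spine shows its length equals $\sqrt{(s_1-s_2)^2+(t_1+t_2)^2}$, which is exactly the product distance since $d_{T_n}((i,t_1),(j,t_2))=t_1+t_2$ for $i\ne j$.

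To finish the singular case I would use two standard facts: a metric tree is CAT(0) (every geodesic triangle degenerates to a tripod, which is thinner than its Euclidean comparison triangle), and the $\ell^2$-product of two CAT(0) spaces is again CAT(0). Hence $\R\times T_n$, and with it $B_n$, is CAT(0). Since metric balls in a CAT(0) space are convex and convex subsets of CAT(0) spaces are themselves CAT(0), the neighborhood $\overline{B}(p,r)$ is CAT(0), completing the proof. As an alternative to the product decomposition, one could check Gromov's link condition directly: the link of an edge point is the graph formed by $n$ arcs of length $\pi$ joining two common endpoints, whose shortest closed geodesic has length $2\pi$, so the link is CAT(1).

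I expect the identification of the singular local model with $\R\times T_n$ to be the main obstacle, both because it carries the geometric content and because it conceals a subtlety in the very first reduction: one must be sure that $\overline{B}(p,r)$ computed with the intrinsic path-length distance of $\xs$ genuinely coincides with the model ball, rather than admitting shorter competitor paths that leave the ball or route near a vertex. This is precisely why $r$ is taken below the distance from $p$ to $S$ and to the non-incident edges; geodesics of the CAT(0) model ball remain inside it by convexity, so no path in $\xs$ can beat them, and the isometry with the model is legitimate.
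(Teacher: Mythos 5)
Your proof is correct, but it reaches the conclusion by a different route than the paper. The paper disposes of the edge points in one stroke: a small ball about an edge point is a union of flat half-discs glued along their common geodesic (hence convex) diameter, and it invokes the gluing theorem of \cite{Sur}, Chapter 10 (Corollary 5 and Lemma 9), which says that CAT(0) spaces glued along convex complete subspaces remain CAT(0). You instead exhibit an explicit isometry of the local model with a convex ball in the product $\R\times T_n$ of a line with an $n$-pronged metric tree, and then quote that trees and $\ell^2$-products of CAT(0) spaces are CAT(0). Your decomposition is more concrete and essentially self-contained --- the unfolding computation verifying $d=\sqrt{(s_1-s_2)^2+(t_1+t_2)^2}$ is exactly the right check, and your closing remark about why the intrinsic metric of $\xs$ restricted to the small ball agrees with the model is a point the paper glosses over entirely. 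What the paper's gluing argument buys, and your product trick does not, is portability: the identical three-line proof is reused verbatim for the ideal hyperbolic metrics (CAT($-1$)) and for the conformal metrics (NPC), where the faces are no longer flat and no splitting $\R\times T_n$ exists; the gluing lemma only needs each half-disc to be nonpositively curved and the common boundary to be geodesic. So your argument is tied to the Euclidean case, while the paper's scales to the other two propositions of Section~\ref{SBack}. Your alternative via the link condition is also valid and is the standard second proof of this fact.
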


\begin{proof}
	Each face of $X$ with a Euclidean metric is CAT(0) since it is flat. For a point $p$ on an edge of $X$, a ball about $p$ that does not reach as far as any of the other edges or vertices is a union of flat half-discs glued along their common convex (geodesic) boundary pieces. By \cite{Sur}, Chapter 10, Corollary 5 and Lemma 9, such a set is CAT(0). Hence every point of $\xs$ has a CAT(0) neighborhood.
\end{proof}

\begin{remark}
	The Euclidean metrics we've defined may fail to be non-positively curved at the vertices of $X$. For example, a neighborhood of a vertex may be simply a cone, and if it is a cone of angle $<2\pi$ then it fails to have bounded curvature at the cone point. However, following \cite{Sur}, Chapter 10 Theorem 15, if associated to any loop around a vertex $v$ the corresponding faces have angles at $v$ that sum to at least $2\pi$, then the metric is CAT(0) in a neighborhood of $v$.
\end{remark}

\subsubsection{Hyperbolic metrics}

The ideal hyperbolic metrics we describe here are based on those defined in \cite{CP}. See also \cite{FG} for more details.

Consider the space $\xs$, and endow each face with the metric of the ideal hyperbolic triangle. To do so we take the upper half-plane model of $\Hyp^2$, that is $\Hyp^2=\{z\in\C\vert Im(z)>0\}=\{(x,y)\in\R^2\vert y>0\}$ with the Poincar\'e metric
\[
ds^2 = \frac{dx^2+dy^2}{y^2}.
\]
We call $\tilde{T}$ the ideal hyperbolic triangle that is the convex hull in $\Hyp^2$ of the points $0$, $1$, and $\infty$ in $\partial\Hyp^2$. For a triangle $T\in\mathcal{C}$ we choose a homeomorphism $\phi:T\backslash S\to\tilde{T}$ to induce the hyperbolic metric as well as the coordinates (either real $(x,y)$ coordinates or complex $z$ coordinates) in $T$.

Each edge of the ideal hyperbolic triangle is isometric to the real line $\R$, so for the metric to be well-defined on the edges, the gluing maps $f\in\mathcal{F}$ must be isometries of $\R$. The (orientation-preserving) isometries of $\R$ are all translations, parametrized by the translation distance. Thus ideal hyperbolic metrics on $\xs$ can be parametrized by the translation distances of the gluing maps, which are the same as the familiar shift parameters from the theory of punctured hyperbolic surfaces.

For any triangle $T\in\mathcal{C}$ and any edge $e$ incident to $T$, there is a preferred isometry $\phi:T\to\tilde{T}$ that maps the edge $e$ to the line joining $0$ and $\infty$ in $\Hyp^2$ in an orientation-preserving way (where the line joining $0$ and $\infty$ is oriented towards $\infty$). If we want to view all the faces incident to a common edge of $X$ at the same time, we use the following model.

\begin{definition}[Local model of an edge]\label{local edge hyp}
	Let $e\in X$ be an edge, and let $\{e_1,\ldots,e_n\}$ enumerate the preimages of $e$ under the quotient map $\pi$. Each edge $e_j$ is an edge of a triangle $T_j\in\mathcal{C}$. For each triangle $T_j$ choose $r_j>0$ and an isometry $\phi_j:T_j\to r_j\tilde{T}$ so that
	\begin{enumerate}
		\item $\phi_j$ maps $e_j$ onto the geodesic joining $0$ and $\infty$.
		\item For $f:e_j\to e_k\in\mathcal{F}$, $\phi_j\vert_{e_j} = \phi_k\circ f$.
		\item $\max_jr_j = 1$.
	\end{enumerate}
	Call the collection of isometries $\{\phi_j:T_j\to r_j\tilde{T}\}$ a \textit{local model for $e$}.
\end{definition}

\begin{remark}
	The maps $\phi_j$ constituting a local model for an edge depend on the metric of the complex $X$. Moreover, for a fixed metric, there are two possible local models for a given edge, corresponding to whether $e$ is identified with the geodesic joining $0$ and $\infty$ in an orientation-preserving or orientation-reversing way.
\end{remark}

We can also read the shift parameters from the local model of the edge. For two triangles $T_j$ and $T_k$ incident to $e$, the shift parameter gluing $T_j$ to $T_k$ along $e$ is simply $\log\left(\frac{r_j}{r_k}\right)$, where the numbers $r_j,r_k$ come from the relative scales of the corresponding triangles in the local model for $e$.

We can also see from the local model that if there are $k$ faces incident to a given edge $e$, then we have $k-1$ independent shift parameters for how to glue these faces along the given edge. Summing over all of the edges of $X$ we find a total of $3F-E$ parameters. However, we will also want to insist that our hyperbolic metrics on $\xs$ are complete. In analogy with the theory of punctured surfaces, the completeness of our metrics is equivalent to each punctured vertex having the geometry of a cusp.

To understand this condition we first introduce the link of a vertex. For a vertex $v\in X$, its link, denoted $lk(v)$, is a simplicial graph whose vertices correspond to edges of $X$ terminating at $v$ and whose edges correspond to faces of $X$ containing $v$ as a vertex. In the event that an edge $e$ has $v$ for both of its endpoints there will be two vertices in $lk(v)$ corresponding to $e$, or if there is a face $F$ that contains $v$ as more than one of its vertices there will be multiple edges in $lk(v)$ corresponding to $F$.

In order for the punctured vertex to have the geometry of the cusp, horocycles around the puncture must close up. This can be interpreted to say that the shift parameters around any closed loop in $lk(v)$ must sum to $0$. So the completeness of the metric imposes $rk(\pi_1(lk(v)))$ relations among the shift parameters around the vertex $v$, and we have similar relations around every other vertex. In total we impose $3F-2E+V$ relations. Subtracting from our $3F-E$ parameters from above, the space of complete ideal hyperbolic metrics on $\xs$ is parametrized by $\R^{E-V}$ and has dimension $E-V$. In the case where $\xs$ is homeomorphic to a punctured surface $S_{g,n}$, this dimension reduces to the familiar $6g-6+2n$, the dimension of the Teichm\"uller space.

For a complete hyperbolic metric on $\xs$, we also introduce preferred coordinates in a neighborhood of each punctured vertex. These coordinates are well defined precisely when the metric near the vertex is complete.

\begin{definition}[Local model of a vertex]\label{local vertex hyp}
	Let $v\in X$ be a vertex, and let $\{T_1,\ldots,T_n\}$ enumerate the faces of $X$ incident to $v$ (with repetition if $v$ occurs as the vertex of a face more than once). For each face $T_j$ choose $r_j>0$ and an isometry $\phi_j:T_j\to r_j\tilde{T}$ so that
	\begin{enumerate}
		\item $\phi_j$ maps the vertex $v$ to the point at $\infty$.
		\item For $f:e_j\to e_k\in\mathcal{F}$, $\phi_j\vert_{T_j\cap T_k} = \phi_k\vert_{T_j\cap T_k}+t$, for some real (horizontal) translation $t$.
		\item $\max_jr_j = 1$.
	\end{enumerate}
	Call the collection of isometries $\{\phi_j\}$ a \textit{local model for $v$}.
\end{definition}

In analogy to the Euclidean metrics introduced above, the ideal hyperbolic metrics have nice curvature properties. These results can also be found in \cite{CP} and \cite{FG}.

\begin{proposition}
	Let $\sigma$ be one of the complete ideal hyperbolic metrics on $\xs$ described above. Then $(\xs,\sigma)$ is a locally CAT(-1) space.
\end{proposition}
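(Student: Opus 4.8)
The plan is to follow the same strategy as the Euclidean proposition, replacing flatness by constant curvature $-1$ and the CAT(0) gluing theorem by its CAT(-1) analogue. The key preliminary observation is that the ``scalings'' appearing in the local models are in fact hyperbolic isometries: the map $z\mapsto rz$ of the upper half-plane preserves the Poincar\'e metric, since under $w=rz$ one has $\abs{dw}^2/(\operatorname{Im} w)^2=\abs{dz}^2/(\operatorname{Im} z)^2$. Consequently each scaled triangle $r_j\tilde T$ carries exactly the same hyperbolic metric of curvature $-1$ as $\tilde T$ itself, and every face of $\xs$ is isometric to the ideal triangle $\tilde T$, a convex subset of $\Hyp^2$. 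Recording this at the outset is what makes the rest routine.

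First I would dispose of the points interior to a face. Such a point has a neighborhood isometric to a geodesic ball in $\Hyp^2$. Since $\Hyp^2$ is a complete simply connected Riemannian manifold of constant sectional curvature $-1$, it is a CAT(-1) space, and a sufficiently small geodesic ball, being convex, inherits the CAT(-1) property.

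The remaining case is a point $p$ lying on an open edge $e$ of $X$; recall that the vertices $S$ have been removed, so interior-of-face points and open-edge points are the only points of $\xs$. Choose a metric ball about $p$ small enough to avoid every other edge and every vertex. Inside each face incident to $e$, this ball is a hyperbolic half-disc bounded by a segment of $e$, and since the edges of an ideal triangle are geodesics of $\Hyp^2$, that segment is a convex, totally geodesic arc. Thus the ball about $p$ is obtained by gluing finitely many CAT(-1) half-discs along a common convex geodesic arc, the gluing maps being isometries by the very definition of $\sigma$ on the edges.

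To conclude I would invoke Reshetnyak's gluing theorem in its CAT($\kappa$) form, i.e.\ the CAT(-1) analogue of the result cited for the Euclidean case: the gluing of CAT(-1) spaces along isometric complete convex subsets is again CAT(-1). This produces a CAT(-1) neighborhood of $p$, and together with the interior case it shows that every point of $\xs$ has a CAT(-1) neighborhood, as claimed. The only step needing genuine care, and the reason the scaling remark comes first, is confirming that every piece really has curvature $-1$ rather than a rescaled value $-1/r_j^2$; once that is settled the gluing theorem supplies the conclusion exactly as in the flat setting, and the completeness (cusp) condition on the metric plays no role here since the vertices are excluded from $\xs$.
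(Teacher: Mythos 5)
Your proposal is correct and follows essentially the same route as the paper: faces are isometric to hyperbolic (hence curvature $-1$, CAT(-1)) triangles, and a small ball about an edge point is a union of CAT(-1) half-discs glued along their common convex geodesic boundary, so the Reshetnyak-type gluing theorem (the paper cites \cite{Sur}, Chapter 10, Corollary 5 and Lemma 9) gives a CAT(-1) neighborhood. Your preliminary remark that $z\mapsto rz$ is a hyperbolic isometry, so the rescaled triangles $r_j\tilde T$ still have curvature exactly $-1$, is a worthwhile clarification the paper leaves implicit, but it does not change the argument.
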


\begin{proof}
	The proof is identical to the proof for Euclidean metrics. Each face is isometric to a hyperbolic triangle, and so is CAT(-1) (in fact has curvature exactly -1). For any point $p$ on an edge, a ball about $p$ that does not reach as far as any other edge is a union of half-discs, each of curvature -1, glued along their common convex (geodesic) boundary pieces. By \cite{Sur}, Chapter 10, Corollary 5 and Lemma 9, such a set is CAT(-1). Hence every point of $\xs$ has a CAT(-1) neighborhood.
\end{proof}


\subsubsection{Conformal metrics}\label{conformal section}

We will also consider metrics conformal to those described above.

\begin{definition}\label{conformal}
	Let $\sigma$ be either a Euclidean metric on $X$ or an ideal hyperbolic metric on $\xs$, and let $\rho:X\to\mathbb{R}_{>0}$ be a continuous function that is $C^3$ on the closure of each face of $X$. We define the metric $\rho^2\sigma$ on each face by taking the Riemannian metric $\rho^2\sigma$, and by the continuity of $\rho$ these metrics are compatible on the edges of $X$. We also impose the following constraints on $\rho^2\sigma$:
	\begin{enumerate}
		\item\label{fix volume} The volume of $X$ is fixed. That is, if $d\mu_\sigma$ denotes the volume form of the metric $\sigma$, then
		\[ \sum_{T\in\mathcal{C}}\int_T\rho^2d\mu_\sigma = \sum_{T\in\mathcal{C}}\int_Td\mu_\sigma. \]
		\item\label{normal deriv} For every face $T$ and every edge $e$ incident to $T$, $\nabla(\rho\vert_T)\vert_e$ is perpendicular to $e$.
		\item\label{curv diffeq} If $K_\sigma$ is the curvature of the metric $\sigma$, then on each face
		\[ \Delta\rho \ge K_\sigma\rho + \frac{\abs{\nabla\rho}^2}{\rho}. \]
		\item\label{convex} Each face is convex with respect to the metric $\rho^2\sigma$.
		\item\label{complete} The complex is complete with respect to the metric $\rho^2\sigma$.
	\end{enumerate}
\end{definition}

If we started with a Euclidean metric $\sigma$ on $X$, the last condition of completeness is automatically satisfied. If we started with an ideal hyperbolic metric $\sigma$ on $\xs$, then it is possible that $\rho$ may blow up at the vertices of $X$, so long as it does not blow up fast enough to incur infinite area. We now investigate the second and third properties.

\begin{proposition}\label{geodesic}
	Given a metric $\sigma$, let $\rho$ be a conformal factor  as in Definition~\ref{conformal}. For each face $T$ and each edge $e$ incident to $T$, $e$ is a geodesic with respect to the metric $\rho^2\sigma\vert_T$ in $T$.
\end{proposition}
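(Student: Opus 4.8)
The plan is to use the transformation law for geodesic curvature under a conformal change of the metric, after first observing that each edge is already a geodesic of the background metric $\sigma$. Fix a face $T$ and an edge $e$ incident to it. By construction, $(\bar T,\sigma)$ is isometric to a model triangle: a Euclidean triangle when $\sigma$ is Euclidean, and a scaled ideal triangle $r\tilde T\subset\Hyp^2$ when $\sigma$ is hyperbolic. Under this isometry $e$ corresponds to a side of the model triangle, which is a straight segment in $\R^2$ or a side of an ideal triangle in $\Hyp^2$; in both cases such a side is a geodesic of the ambient model space. Hence the $\sigma$-geodesic curvature $k_\sigma$ of $e$ vanishes identically along $e$.

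With this in hand I would pass to $\rho^2\sigma$. For a conformal rescaling, the geodesic curvature of a fixed curve transforms as
\[
k_{\rho^2\sigma} \;=\; \rho^{-1}\bigl(k_\sigma + \partial_\nu\log\rho\bigr),
\]
where $\nu$ is the $\sigma$-unit normal to the curve along $e$. Since $k_\sigma\equiv 0$, this reduces to $k_{\rho^2\sigma}=\rho^{-2}\,\partial_\nu\rho$ on $e$, and condition~\ref{normal deriv} of Definition~\ref{conformal}, which forces the normal derivative of $\rho\vert_T$ to vanish along $e$, makes the right-hand side vanish. Thus $e$ is a geodesic of $\rho^2\sigma\vert_T$. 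To keep the Euclidean and hyperbolic cases uniform I would actually run this in the conformal coordinates of the local models (Definitions~\ref{local edge euc} and~\ref{local edge hyp}): in both, $\sigma=\lambda^2(dx^2+dy^2)$ and $e$ sits on the $y$-axis with $T\subset\{x\ge0\}$, so $\partial_x$ is the normal direction. Writing $\rho^2\sigma=\mu^2(dx^2+dy^2)$ with $\mu=\rho\lambda$, the line $\{x=0\}$ is a $\mu^2(dx^2+dy^2)$-geodesic exactly when $\mu_x=0$ along it; since $e$ is already a $\sigma$-geodesic we have $\lambda_x\vert_e=0$, so $\mu_x\vert_e=\rho_x\,\lambda\vert_e$, and condition~\ref{normal deriv} gives $\rho_x\vert_e=0$.

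The computation is short, so the only thing that truly needs care is the correct use of condition~\ref{normal deriv}: what makes $e$ geodesic is the vanishing of the component of $\nabla\rho$ \emph{across} $e$ (the normal derivative), so I would be careful to identify $\partial_x$ as the normal direction in each local model and to confirm that the background factor $\lambda$ contributes nothing on $e$ — which is automatic precisely because $e$ is a geodesic of $\sigma$ itself, giving $\lambda_x\vert_e=0$. A minor additional point is that in the hyperbolic case the statement is local along the open edge; near the punctured vertex the model degenerates toward the cusp, but geodesy of $e$ is a local condition along $e$ and is unaffected.
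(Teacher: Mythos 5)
Your proof is correct and rests on the same two pillars as the paper's own argument: passing to the model coordinates in which $e$ lies on the $y$-axis and $\sigma$ is conformally flat, and observing that condition~\ref{normal deriv} of Definition~\ref{conformal} kills the normal derivative $\partial_x\rho$ along $e$, which is the only obstruction. The difference is one of packaging. The paper writes out the geodesic equations for a parametrization $\gamma(t)=(0,f(t))$ and uses the transformation law for Christoffel symbols under a conformal change, so it must dispose of the first equation (via $\partial_x\rho=0$) and then of the second, an ODE that merely fixes the geodesic parametrization. You instead invoke the conformal transformation law for geodesic curvature, $k_{\rho^2\sigma}=\rho^{-1}\left(k_\sigma+\partial_\nu\log\rho\right)$, which is parametrization-independent and collapses both steps into a single scalar identity; your observation that $\lambda_x\vert_e=0$ because $e$ is both Euclidean-straight in the model and a $\sigma$-geodesic is the correct way to see that the background factor contributes nothing. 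One small point worth flagging: as literally written, condition~\ref{normal deriv} says that $\nabla(\rho\vert_T)\vert_e$ is \emph{perpendicular} to $e$, which taken at face value would make the tangential rather than the normal derivative vanish; you read it as the vanishing of the normal derivative $\partial_x\rho$ on $e$, which is what geodesy actually requires and is exactly how the paper's own proof reads it, so this is an issue with the wording of the definition and not a gap in your argument.
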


\begin{proof}
	For a fixed triangle $T\in\mathcal{C}$, choose an isometric embedding $\phi$ of $T$ into either $\R^2$ if $\sigma$ is Euclidean or $\Hyp^2$ if $\sigma$ is hyperbolic. By postcomposing with an isometry we may assume that $\phi$ takes one of the edges $e$ of $T$ onto the $y$-axis (using the upper half-space model for $\Hyp^2$). We use $\phi$ to push forward the metric $\rho^2\sigma$ from $T$ onto its image.
	
	Let $\gamma(t) = (0,f(t))$ be a parametrization of $\phi(e)$. In order for $\gamma$ to be a geodesic with respect to the metric $\rho^2\sigma$, we must have the geodesic equations, the first of which is
	\begin{eqnarray*}
		0 & = & \ddot{\gamma}^1 + ^\rho\Gamma_{ij}^1(\gamma)\dot{\gamma}^i\dot{\gamma}^j\\
		& = & ^\rho\Gamma_{22}^1(0,f(t))(f'(t))^2
	\end{eqnarray*}
	Here we have used $^\rho\Gamma$ to denote the Christoffel symbols of the metric $\rho^2\sigma$. If $\Gamma$ represents the Christoffel symbols of $\sigma$, then the conformal symbols satisfy
	\[
	^\rho\Gamma_{ij}^k = \Gamma_{ij}^k + \dfrac{1}{\rho}\left(\delta_i^k\partial_j\rho + \delta_j^k\partial_i\rho - \sigma_{ij}\sigma^{km}\delta_i^j\partial_k\rho\right).
	\]

	For the Euclidean metric all Christoffel symbols vanish, and for the Poincar\'e metric in the upper half-plane model, we have $\Gamma_{22}^1=0$. In both cases the metric $\sigma$ is diagonal. Hence the first of the geodesic equations for $\gamma$ reduces to
	\[
	0 = \sigma_{22}\sigma^{11}\partial_x\rho.
	\]
	
	From the condition~\ref{normal deriv} of Definition~\ref{conformal}, the normal derivative of $\rho$ along $e$ vanishes. In other words, $\partial_x\rho = 0$. Thus the first of the geodesic equations is automatically satisfied. The second of the geodesic equations reads
	\[
	f''(t) + ^\rho\Gamma_{22}^2(0,f(t))(f'(t))^2 = 0.
	\]
	Once the metric $\sigma$ and the conformal factor $\rho$ are fixed, this is an ordinary differential equation for the function $f$, which can be solved by classical theory. Therefore some parametrization of the edge $e$ is a geodesic with respect to the metric $\rho^2\sigma$ in the face $T$.
\end{proof}

\begin{proposition}\label{conformal curvature}
	Given a metric $\sigma$, let $\rho$ be a conformal factor as in Definition~\ref{conformal}. The curvature of the metric $\rho^2\sigma$ in each face $T$ is bounded above by $0$.
\end{proposition}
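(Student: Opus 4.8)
The plan is to invoke the standard transformation law for Gaussian curvature under a conformal change of metric in dimension two, and then to observe that the stated differential inequality (condition~\ref{curv diffeq}) is precisely what this law demands. Working in the interior of a fixed face $T$, where $\sigma$ is a genuine smooth Riemannian metric (Euclidean or hyperbolic) and $\rho$ is $C^3$, I would write the conformal factor as $\rho^2\sigma = e^{2\phi}\sigma$ with $\phi=\log\rho$. The classical formula then gives
\[
K_{\rho^2\sigma} = \frac{1}{\rho^2}\left(K_\sigma - \Delta_\sigma\phi\right),
\]
where $\Delta_\sigma$ denotes the Laplace--Beltrami operator of $\sigma$ in the divergence-of-gradient convention, so that $\Delta_\sigma=\partial_x^2+\partial_y^2$ in Euclidean coordinates. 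Since $\rho^2>0$, establishing $K_{\rho^2\sigma}\le 0$ is equivalent to showing $\Delta_\sigma\log\rho \ge K_\sigma$.

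Next I would expand $\Delta_\sigma\log\rho$ using $\nabla\log\rho = \rho^{-1}\nabla\rho$, which yields
\[
\Delta_\sigma\log\rho = \frac{\Delta_\sigma\rho}{\rho} - \frac{\abs{\nabla\rho}^2}{\rho^2}.
\]
Substituting this into $\Delta_\sigma\log\rho\ge K_\sigma$ and multiplying through by $\rho>0$ converts the desired curvature bound into
\[
\Delta_\sigma\rho \ge K_\sigma\rho + \frac{\abs{\nabla\rho}^2}{\rho},
\]
which is exactly condition~\ref{curv diffeq} of Definition~\ref{conformal}. The proposition then follows by reversing this chain of equivalences.

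There is no substantial analytic obstacle here; the content of the statement is really the recognition that condition~\ref{curv diffeq} was engineered to encode $K_{\rho^2\sigma}\le 0$. The only points needing care are bookkeeping ones: fixing a consistent sign convention for both the Laplacian and $K_\sigma$. To pin this down I would verify the curvature law against the model case $\phi=-\log y$, which must recover the hyperbolic metric of curvature $-1$ from the flat metric $dx^2+dy^2$ (indeed $\Delta_\sigma\phi = 1/y^2$ there, giving $K_{\rho^2\sigma}=-1$). I would also emphasize that the entire computation is purely local in the interior of each face, so the two-dimensional conformal curvature formula applies directly without any reference to the edges or the gluing data, and that the $C^3$ hypothesis on $\rho$ guarantees $\Delta_\sigma\rho$, hence $K_{\rho^2\sigma}$, is defined and continuous on the closure of each face.
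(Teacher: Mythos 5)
Your proposal is correct and follows essentially the same route as the paper: both apply the two-dimensional conformal curvature formula $K_{\rho^2\sigma}=\rho^{-2}\bigl(K_\sigma-\Delta_\sigma\log\rho\bigr)$, expand $\Delta_\sigma\log\rho=\rho^{-1}\Delta_\sigma\rho-\rho^{-2}\abs{\nabla\rho}^2$, and observe that non-positivity is exactly condition~\ref{curv diffeq} of Definition~\ref{conformal}. Your sign-convention sanity check against the hyperbolic model is a reasonable extra precaution but adds nothing beyond the paper's argument.
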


\begin{proof}
	Fix a face $T$ of $X$. Let $K_\sigma$ denote the Gaussian curvature of the metric $\sigma$ and $K_\rho$ the Gaussian curvature of the metric $\rho^2\sigma$. Let $\Delta$ denote the Laplacian with respect to the metric $\sigma$, which is given by the trace of the Hessian. Then the Gaussian curvatures satisfy
	\begin{eqnarray*}
		K_\rho & = & \frac{1}{\rho^2}\Big[K_\sigma - \Delta\log\rho\Big]\\
		& = & \frac{1}{\rho^2}\left[K_\sigma - \frac{\Delta\rho}{\rho} + \frac{\abs{\nabla\rho}^2}{\rho^2}\right]
	\end{eqnarray*}
	By the condition~\ref{curv diffeq} of Definition~\ref{conformal}, this curvature quantity is non-positive.
\end{proof}

\begin{corollary}\label{npcconf}
	The conformal metrics $\rho^2\sigma$, when $\rho$ is as in Definition~\ref{conformal}, define locally CAT(0) metrics on $\xs$.
\end{corollary}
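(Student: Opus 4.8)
The plan is to follow the template of the two preceding propositions (the Euclidean CAT(0) and hyperbolic CAT(-1) statements) almost verbatim, since the only features of those metrics that their proofs actually used were that each face has nonpositive curvature and that the edges are geodesics along which faces are glued isometrically. Both facts are now available for the conformal metric: Proposition~\ref{conformal curvature} gives that the Gaussian curvature of $\rho^2\sigma$ is $\le 0$ on each face, and Proposition~\ref{geodesic} shows that each edge is a geodesic of $\rho^2\sigma$ in each incident face. Since being locally CAT(0) is a pointwise condition on $\xs$, I would verify it separately for points in the open faces and for points on the open edges, in each case choosing a neighborhood small enough to avoid every vertex and every edge but one.

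For a point $p$ in the interior of a face $T$, I would take a closed geodesic ball $\overline{B(p,r)}$ of radius less than the injectivity and convexity radii at $p$ and contained in the interior of $T$. Such a ball is compact (hence complete as a length space), simply connected, convex, and has curvature $\le 0$ by Proposition~\ref{conformal curvature}, so by the standard local comparison (Cartan--Hadamard/Alexandrov) theorem for Riemannian surfaces with an upper curvature bound it is CAT(0). For a point $p$ on an open edge $e$, let $T_1,\dots,T_n$ enumerate the faces incident to $e$ and choose $r$ so small that $\overline{B(p,r)}$ meets no vertex and no edge other than $e$, and that each piece $\overline{B(p,r)}\cap T_j$ is convex and complete; here condition~\ref{convex} of Definition~\ref{conformal} is what lets me arrange convexity uniformly up to the edge. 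By Proposition~\ref{geodesic} the segment $\overline{B(p,r)}\cap e$ is a geodesic of $\rho^2\sigma$ in each $T_j$, so each $\overline{B(p,r)}\cap T_j$ is a CAT(0) half-disc (by the same argument as the interior case), and the common segment $\overline{B(p,r)}\cap e$ is a complete convex subset of each.

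It then remains to glue. The ball $\overline{B(p,r)}$ is obtained from the finitely many CAT(0) half-discs $\overline{B(p,r)}\cap T_j$ by identifying them along the common complete convex geodesic segment $\overline{B(p,r)}\cap e$, so Reshetnyak's gluing theorem (\cite{Sur}, Chapter 10, Corollary 5 and Lemma 9) applies and the result is CAT(0). Combining the two cases, every point of $\xs$ has a CAT(0) neighborhood, which is exactly the assertion of the corollary.

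The main obstacle is the step where the pointwise curvature inequality of Proposition~\ref{conformal curvature} must be upgraded to a genuine CAT(0) comparison on the pieces to be glued: unlike the Euclidean and hyperbolic cases, where the pieces are manifestly convex subsets of the model spaces $\R^2$ or $\Hyp^2$, here they are abstract Riemannian surfaces of variable nonpositive curvature, so passing from ``curvature $\le 0$'' to ``CAT(0)'' requires the comparison machinery, which in turn needs each half-disc to be convex, simply connected, and complete. This is precisely what the choice of small radius together with the convexity condition~\ref{convex} supplies. A secondary point to check is that at a \emph{singular} edge, where $n\ge 3$ faces meet, the gluing theorem is applied to the iterated gluing of all the half-discs along the single geodesic segment $\overline{B(p,r)}\cap e$, which is the situation covered by the cited results.
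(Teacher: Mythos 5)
Your proposal is correct and follows essentially the same route as the paper: nonpositive curvature on each face from Proposition~\ref{conformal curvature}, geodesic edges from Proposition~\ref{geodesic}, and Reshetnyak gluing of NPC half-discs along the common geodesic segment via the same citation to \cite{Sur}. The only difference is that you spell out the passage from the pointwise curvature bound to the local CAT(0) comparison (convexity, completeness, small radius), which the paper leaves implicit.
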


\begin{proof}
	We have just shown that the metrics on each face are non-positively curved. For a point $p$ on an edge, a ball about $p$ that does not reach as far as any other edge or vertex is a union of NPC half-discs glued along their common convex (geodesic) boundary. By \cite{Sur}, Chapter 10, Corollary 5 and Lemma 9, such a set is NPC. Hence every point in $\xs$ has a CAT(0) neighborhood.
\end{proof}

\begin{remark}
	Just as in the case of Euclidean metrics, conformal metrics may fail to be CAT(0) at the vertices of the complex.
\end{remark}


\subsection{Singular metrics}

We will want to allow specific types of degenerations of our metrics. The particular degenerations we will consider are cone-type singularities of non-positive curvature, which we define in analogy to Section 4 of \cite{Tr}.

\begin{definition}\label{cone}
	Let $X$ be a finite 2-dimensional simplicial complex and let $\mathscr{C}=\cup_{n=1}^\infty\{p_n\}$ be a discrete set of points lying in the interior of the faces of $X$. A conformal metric $\rho^2\sigma$ , where $\rho$ may fail to be positive at $\mathscr{C}$, is called a \textit{cone metric} if it satisfies all five properties from Definition~\ref{conformal}, and in addition at each point $q\in\mathscr{C}$ there is an $\alpha=\alpha(q)>1$ and a positive $C^3$ function $\phi$ so that in polar coordinates centered at $q$ the conformal factor $\rho$ has the form
	    \[
	        \rho = r^{\alpha-1}\phi.
	    \]  
\end{definition}

Propositions~\ref{geodesic} and \ref{conformal curvature} still apply. For the curvature near a point $q\in\mathscr{C}$, the function $\phi$ from Definition~\ref{cone} satisfying $\rho = r^{\alpha-1}\phi$ must satisfy the same inequality as $\rho$, namely
\[
    \Delta\phi \geq K\phi + \frac{\abs{\nabla\phi}^2}{\phi}.
\]
Moreover one can see, as in \cite{K}, that the curvature picks up a Dirac measure at $q$ of mass $2\pi(1-\alpha)<0$.

Finally we show that metrics with cone-type singularities are well approximated by the smooth conformal metrics of Definition~\ref{conformal}.

\begin{lemma}\label{lemma1 kubert}
	Let $\rho^2\sigma$ be a metric with cone-type singularities as in Definition~\ref{cone}. There is a family $\{\rho_\epsilon\}_{\epsilon>0}$ of conformal factors satisfying Definition~\ref{conformal} so that $\rho_\epsilon\ge\rho_\tau$ for $\epsilon\ge\tau>0$ and so that $\rho_\epsilon^2\sigma$ converges to $\rho^2\sigma$ on the closure of each face as $\epsilon$ tends to $0$. This convergence is $C^0$ on $X$ and locally in $C^3$ on $T\backslash\mathscr{C}$ for each face $T$.
\end{lemma}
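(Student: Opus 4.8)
The plan is to leave $\rho$ unchanged outside small disjoint disks around the cone points of $\mathscr{C}$ (finite in number, since $\mathscr{C}$ is discrete and the closed faces are compact), and to round off the singular radial profile $r^{\alpha-1}$ inside each disk. Since each $q\in\mathscr{C}$ lies in the interior of a face, I would first pass to a conformal coordinate chart near $q$ in which $\sigma=\lambda^2(dx^2+dy^2)$ with $\lambda$ a positive $C^\infty$ function and $r$ the Euclidean radius; any discrepancy between this $r$ and the geodesic radius of Definition~\ref{cone} is smooth and positive and can be absorbed into $\phi$, so that $\rho=r^{\alpha-1}\phi$ with $\phi$ positive $C^3$. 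In this chart the curvature condition (property~\ref{curv diffeq}) is equivalent to $\rho_\epsilon^2\sigma$ being NPC, i.e. to $\Delta_\sigma\log\rho_\epsilon\ge K_\sigma$ (this is the equivalence behind Proposition~\ref{conformal curvature}); and because $\Delta_\sigma=\lambda^{-2}\Delta_{\mathrm{eucl}}$, subharmonicity in the flat sense transfers to the $\sigma$-sense. The remark following Definition~\ref{cone} already records $\Delta_\sigma\log\phi\ge K_\sigma$, which is exactly what that inequality says once one uses $\Delta_\sigma\log r=0$ away from $q$.

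I would then replace $r^{\alpha-1}$ by a smooth positive radial profile $g_\epsilon(r)$ and set $\rho_\epsilon=g_\epsilon(r)\phi$ inside the disk and $\rho_\epsilon=\rho$ outside. Then $\Delta_\sigma\log\rho_\epsilon=\Delta_\sigma\log g_\epsilon+\Delta_\sigma\log\phi$, so property~\ref{curv diffeq} follows as soon as $\log g_\epsilon$ is $\sigma$-subharmonic, i.e. $\Delta_{\mathrm{eucl}}\log g_\epsilon\ge 0$. The clean reformulation I would use is that, for a radial function, $\Delta_{\mathrm{eucl}}\log g_\epsilon\ge 0$ is equivalent to $\log g_\epsilon$ being a \emph{convex} function of $t=\log r$, since $\Delta_{\mathrm{eucl}}f=r^{-2}f_{tt}$ for radial $f$. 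The target satisfies $\log r^{\alpha-1}=(\alpha-1)t$, linear with slope $\alpha-1$, while the model $(r^2+\epsilon^2)^{(\alpha-1)/2}$ has $\log$ convex in $t$ with slope rising monotonically from $0$ to $\alpha-1$. I would therefore take $g_\epsilon$ to be a profile whose logarithm is convex in $t$, equals $(\alpha-1)t$ for $r\ge\delta$ (so $\rho_\epsilon\equiv\rho$ there, matching to all orders), and levels off to a positive constant as $r\to 0$, with the curved region and the value $g_\epsilon(0)$ shrinking to $0$ as $\epsilon\to 0$.

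With such a profile the remaining properties are inherited or checked directly. Monotonicity $\rho_\epsilon\ge\rho_\tau$ and $\rho_\epsilon\ge\rho$ follow by choosing the profiles increasing in $\epsilon$ (as $(r^2+\epsilon^2)^{(\alpha-1)/2}$ already is). Convergence is immediate: away from $\mathscr{C}$ one has $g_\epsilon=r^{\alpha-1}$ once $\epsilon$ is small, giving local (indeed exact) $C^3$ convergence on $T\backslash\mathscr{C}$, while $\abs{\rho_\epsilon-\rho}\le(\sup\phi)\sup_{r\le\delta}\abs{g_\epsilon-r^{\alpha-1}}\to 0$ gives $C^0$ convergence on $X$. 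Because the modification is supported in compact subsets of the open faces, $\rho_\epsilon=\rho$ near every edge and vertex, so properties~\ref{normal deriv}, \ref{convex}, and~\ref{complete} are inherited from $\rho$ and Proposition~\ref{geodesic} still applies to the edges; moreover $\rho_\epsilon$ is $C^3$ on each closed face and continuous on $X$.

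The main obstacle is the simultaneous demand that $\rho_\epsilon$ agree with $\rho$ exactly outside a neighborhood of each cone point (needed to inherit properties~\ref{normal deriv}, \ref{convex}, \ref{complete}) while still satisfying the curvature inequality through the transition region, since a naive interpolation between the rounded and singular profiles can destroy subharmonicity. The convexity-in-$\log r$ formulation is precisely the device that removes this difficulty, because a convex function that is eventually linear of slope $\alpha-1$ and left-asymptotically flat exists and is subharmonic automatically. One genuinely delicate point is the volume normalization (property~\ref{fix volume}): rounding strictly increases $\rho$ near each cone point, hence increases $\int_X\rho_\epsilon^2\,d\mu_\sigma$, so exact pointwise monotonicity is in tension with fixing the volume. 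I would resolve this by noting that the excess $\int_X(\rho_\epsilon^2-\rho^2)\,d\mu_\sigma\to 0$ and either restoring the volume by a multiplicative constant $c_\epsilon\to 1$ (which changes neither $\Delta_\sigma\log\rho_\epsilon$ nor the geodesics, hence preserves properties~\ref{normal deriv}--\ref{complete}) or imposing property~\ref{fix volume} only on the limit $\rho$; this is the one place where bookkeeping, rather than a substantive estimate, is required.
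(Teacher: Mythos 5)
Your proposal is correct and follows the same basic strategy as the paper's proof: leave $\rho$ untouched away from $\mathscr{C}$ and smooth only the radial factor $r^{\alpha-1}$ near each cone point, keeping $\phi$ as is. The paper does this with the explicit formula $\rho_\epsilon=\big(\epsilon^3\eta(r/\epsilon)+r\big)^{\alpha-1}\phi$ and simply asserts that ``if $\eta$ is chosen appropriately'' the curvature condition survives; your reformulation of property~\ref{curv diffeq} as $\Delta_\sigma\log\rho_\epsilon\ge K_\sigma$, together with the observation that for a radial profile this reduces to convexity of $\log g_\epsilon$ in $t=\log r$, is a genuine improvement: it makes the curvature verification automatic and also gives the comparison $\rho_\epsilon\ge\rho$ for free, since a convex function of $t$ that is eventually linear of slope $\alpha-1$ lies above that line. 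You are also right to flag the volume normalization (property~\ref{fix volume}) as the one requirement in genuine tension with monotonicity; the paper's proof silently ignores this. Two caveats. First, your rescaling fix $c_\epsilon\to1$ would itself break the required monotonicity $\rho_\epsilon\ge\rho_\tau$: away from the cone points one would have $c_\epsilon\rho<c_\tau\rho$ for $\epsilon>\tau$, since the larger smoothing parameter produces the larger volume excess and hence the smaller constant. So only your second option --- imposing the normalization on the limit alone --- is consistent with the statement as written. Second, to get $C^3$ regularity of $\rho_\epsilon$ across the cone point you should take $\log g_\epsilon$ \emph{exactly} constant for $r$ near $0$, not merely asymptotically constant, since a radial function with nonvanishing radial derivative at the origin fails to be differentiable there (a point on which the paper's own explicit formula is arguably also deficient).
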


\begin{proof}
	This result is analogous to Lemma 1 from \cite{K}.
	
	By taking an appropriate mollification of $\rho$ near the points of $\mathscr{C}$ we find our family $\rho_\epsilon$ with the desired property. Concretely, fix a positive test function $\eta$ supported near the origin in $\R^2$. In coordinates near $q\in\mathscr{C}$, $\rho(x,y) = r^{\alpha-1}\phi(x,y)$, so near $q$ define
	\[
	\rho_\epsilon = \Big(\epsilon^3\eta(r/\epsilon) + r\Big)^{\alpha-1}\phi.
	\]
	The functions $\rho_\epsilon$ are smooth over $\mathscr{C}$, and if $\eta$ is chosen appropriately and $\epsilon$ is sufficiently small then the metrics have non-positive curvature. Furthermore it is clear that they satisfy the convergence properties claimed in the statement of the Lemma.
\end{proof}


\section{Existence of finite energy maps}\label{SFinite}

\subsection{Function spaces}

We define the classes of maps we'll be working on for the rest of this paper. 

\begin{definition}
	A map $u:X\to X$ {\it respects the simplicial structure} of $X$, or is a {\it simplicial map} if
	\begin{enumerate}
		\item $u\vert_T$ maps $T$ to $T$ for every $T\in\mathcal{C}$.
		\item $u\vert_e$ maps $e$ to $e$ for each edge $e$ of $X$.
		\item $u(v) = v$ for every vertex $v$ of $X$.
	\end{enumerate}
	A map $u:\xs\to\xs$ is a {\it simplicial map} if it satisfies the first two of these properties.
\end{definition}

\begin{remark}
    For a simplicial map $u:X\to X$, since the vertices are fixed by $u$, we may consider the map as a simplicial map $u\vert_{\xs}:\xs\to\xs$. 
\end{remark}

Throughout the rest of the paper, we will fix two metrics $\sigma$ and $\tau$, either both ideal hyperbolic metrics on $\xs$ or both Euclidean metrics on $X$ (which we may think of as metrics on $\xs$ for ease of notation). We will for the time being also fix a conformal factor $\rho$ as in Definition~\ref{conformal}, though in Section~\ref{SSing} we will allow $\rho$ to vary.

We will consider two main classes of maps for the remainder of this paper.  The first is
\[
W^{1,2}(\rho) = \{u:(\xs,\sigma)\to(\xs,\rho^2\tau)\mid u\text{ is simplicial, and }\forall T\in\mathcal{C}, u\vert_{T}\in W^{1,2}(T,T)\}.
\]
This class consists of all simplicial maps whose restriction to each face of $X$ is a $W^{1,2}$ map. The second class is
\[
\mathcal{D}(\rho) = \{u\in W^{1,2}(\rho) \mid \forall T\in\mathcal{C},u\vert_T\text{ is a diffeomorphism}\}.
\]
This class, motivated by the strategy of \cite{JS} for constructing harmonic diffeomorphisms, is the class of simplicial $W^{1,2}(\rho)$ maps whose restriction to each face is a diffeomorphism.

We will also require the spaces of functions
\[
L^2(\xs,\sigma) = \{f:\xs\to\R \mid \sum_{T\in\mathcal{C}}\int_T f^2d\mu_\sigma <\infty\},
\]
and
\[
W^{1,2}(\xs,\sigma) = \{f\in L^2(\xs,\sigma) \mid \forall T\in\mathcal{C}, f\vert_T\in W^{1,2}(T,\sigma)\}.
\]

\subsection{Existence}

The goal of this section is to produce a map in $W^{1,2}(\rho)$, i.e. a finite energy simplicial map from $(\xs,\sigma)$ to $(\xs,\rho^2\tau)$.

Recall that if $(X^m,g)$ and $(Y^n,h)$ are Riemannian manifolds, we define the energy of a smooth map $f:X\to Y$ as
\[
E(f):=\int_X\abs{\nabla f}^2d\mu.
\]
Here
\[
\abs{\nabla f}^2(x)=\sum_{i,j=1}^n\sum_{\alpha,\beta=1}^m g^{\alpha\beta}(x)h_{ij}(f(x))\dfrac{\partial f^i}{\partial x^{\alpha}}\dfrac{\partial f^j}{\partial x^{\beta}},
\]
with $x^{\alpha}$ and $f^i$ ocal coordinate systems around on $M$ and $N$, respectively.
$\abs{\nabla f}^2$ is called the energy density.

We can use the metrics $\sigma$ and $\tau$ to give coordinates on any face of $X$. In the case of hyperbolic metrics, we identify each face with $\tilde{T}\subset\Hyp^2$, the ideal triangle with vertices at $0$, $1$, and $\infty$. In the case of Euclidean metrics, we identify each face with a Euclidean triangle $\Delta\subset\R^2$. We may take a map $u:\xs\to\xs$, restrict to a face $T$, and use these coordinates to view $u\vert_T$ as a map $f:\tilde{T}\to\tilde{T}$ or $f:\Delta_1\to\Delta_2$. We can use the metrics $\sigma$ and $\rho^2\tau$ in these coordinates to compute the energy of $u\vert_T$. The total energy of $u$ is the sum over all the faces of $X$, namely

\[
E(u) = \sum_{T\in\mathcal{C}}E(u\vert_T).
\]

\begin{theorem}\label{finite energy euclidean}
Let $\sigma, \tau$ be Euclidean metrics. There exists a map $H\in\mathcal{D}(\rho)$. That is, $H:(\xs,\sigma)\to(\xs,\rho^2\tau)$ with finite energy such that $H\vert_T$ is a diffeomorphism for each face $T$ of $X$.	
\end{theorem}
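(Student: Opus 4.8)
The plan is to build $H$ one face at a time as a canonical affine map and then to check that these pieces agree along shared edges, so that they descend to a single simplicial self-map of $\xs$. Fix a face $T\in\mathcal{C}$. The two metrics give two isometric identifications of the closed face: an isometry $\phi^\sigma_T$ of $(\bar T,\sigma)$ with a Euclidean triangle $\Delta^\sigma_T$, and an isometry $\phi^\tau_T$ of $(\bar T,\tau)$ with a Euclidean triangle $\Delta^\tau_T$. I would let $L_T:\Delta^\sigma_T\to\Delta^\tau_T$ be the unique affine map carrying each vertex of $\Delta^\sigma_T$ to the corresponding vertex of $\Delta^\tau_T$ (matched according to the labelling of the vertices of $T$), and define $H\vert_T:=(\phi^\tau_T)^{-1}\circ L_T\circ\phi^\sigma_T$. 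Since the three vertices of $T$ carry the same orientation in both realizations, $L_T$ is orientation-preserving, so $\det L_T>0$ and $L_T$, hence $H\vert_T$, is a diffeomorphism of $\bar T$ onto itself.

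The step that requires care --- and the only real content of the argument --- is to verify that the maps $H\vert_T$ glue to a well-defined simplicial map; that is, for an edge $e$ incident to two faces $T$ and $T'$, the restrictions $H\vert_T$ and $H\vert_{T'}$ must agree on $e$. This is exactly where the compatibility of $\sigma$ and $\tau$ along edges (Definition~\ref{RM}) enters: the $\sigma$-length $\ell_\sigma(e)$ and the $\tau$-length $\ell_\tau(e)$ of $e$ are independent of the incident face. The restriction of $L_T$ to the side corresponding to $e$ is the orientation-preserving linear map sending the segment of $\sigma$-length $\ell_\sigma(e)$ to the segment of $\tau$-length $\ell_\tau(e)$ while fixing the endpoints; in terms of arc length it is simply multiplication by $\ell_\tau(e)/\ell_\sigma(e)$. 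This description uses only $e$ and the two metrics, not the ambient face, so $H\vert_T$ and $H\vert_{T'}$ agree on $e$. Together with the facts that $H$ fixes every vertex and carries each edge to itself, this shows $H$ is simplicial.

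It then remains only to bound the energy. In the coordinates furnished by $\phi^\sigma_T$ and $\phi^\tau_T$, the domain metric is the flat $\delta$ and the target metric is $\rho^2\delta$, while $L_T$ is affine with constant derivative matrix $A$. Hence the energy density of $H\vert_T$ is $\abs{\nabla H\vert_T}^2=\|A\|^2\,\rho^2$, with $\rho$ evaluated along the map and $\|A\|$ constant, so
\[
E(H\vert_T)=\|A\|^2\int_{\Delta^\sigma_T}\rho^2\,d\mu_\sigma\le \|A\|^2\Big(\max_{\bar T}\rho^2\Big)\,\mathrm{Area}_\sigma(\bar T)<\infty,
\]
because $\rho$ is continuous, hence bounded, on the compact closed face $\bar T$, which has finite $\sigma$-area. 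Summing over the finitely many faces of $X$ gives $E(H)<\infty$, so $H\in W^{1,2}(\rho)$, and since each $H\vert_T$ is a diffeomorphism, $H\in\mathcal{D}(\rho)$. I expect the finite-energy bound to be routine given compactness and the smoothness of $\rho$; the one genuine obstacle is the edge-gluing, which the compatibility condition in Definition~\ref{RM} is tailored to resolve.
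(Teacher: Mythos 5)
Your proposal is correct and follows essentially the same route as the paper's proof: the unique vertex-matching affine map between the $\sigma$- and $\tau$-realizations of each face, agreement on shared edges because the restriction of an affine map to an edge is determined by the edge lengths alone, and finiteness of energy from the constancy of the affine derivative together with the boundedness of $\rho$ on each compact face. Your edge-gluing verification is in fact slightly more explicit than the paper's one-line justification.
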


\begin{proof}
First we'll define $H$ on each face of $X$. The definitions on adjacent faces will coincide on the common edge, so these local maps glue together to form a map $H\in\mathcal{D}(\rho)$.

Let $T\in\mathcal{C}$ be a face of $X$. From $\sigma$ we have an isometry $\phi_1:T\to\Delta_1$, where $\Delta_1\subset\R^2$ is a Euclidean triangle. And from the Euclidean metric $\tau$ we have an isometry $\phi_2:T\to\Delta_2$. If the vertices of $T$ are $v_1$, $v_2$, and $v_3$, then we can label the corresponding vertices of $\Delta_1$ by $v_{1,1}$, $v_{1,2}$, and $v_{1,3}$, and analogously the vertices of $\Delta_2$ by $v_{2,1}$, $v_{2,2}$, and $v_{2,3}$.

There is a unique affine map $f:\Delta_1\to\Delta_2$ that maps $v_{1,j}$ to $v_{2,j}$ for $j=1,2,3$. Define $h_T$ by
\[
h_T = \phi_2^{-1}\circ f\circ\phi_1.
\]
Since $h_T$ is affine on each edge of $T$, if $S$ and $T$ share an edge $e$ then $(h_S)\vert_e = (h_T)\vert_e$. Thus we can define $H:X\to X$ by
\[
H\vert_T = h_T.
\]
It is clear that $H$ is a simplicial map since each $h_T$ was. Also, as an affine map, $H\vert_T$ is a diffeomorphism for each face $T$.

Moreover the energy density of $H\vert_T$ with respect to the target metric $\tau$ is constant, and since $\rho$ is bounded the energy density of $H\vert_T$ is bounded with respect to the target metric $\rho^2\tau$. Since the area of $T$ with respect to the domain metric $\sigma$ is finite, the energy of $H\vert_T$ with respect to $\rho^2\tau$ is finite. And since there are finitely many faces, $H$ has finite energy. Hence $H\in\mathcal{D}$.
\end{proof}

\begin{theorem}\label{finite energy hyperbolic}
Let $\sigma, \tau$ be ideal hyperbolic metrics. There exists a map $H\in\mathcal{D}(\rho)$. That is, $H:(\xs,\sigma)\to(\xs,\rho^2\tau)$ with finite energy such that $H\vert_T$ is a diffeomorphism for each face $T$ of $X$.
\end{theorem}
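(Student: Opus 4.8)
The plan is to imitate the proof of Theorem~\ref{finite energy euclidean}: define $H$ one face at a time and check that the local pieces agree along shared edges, so that they descend to a global simplicial map. On a face $T$ we again have an isometry $\phi_1:T\to\tilde T$ coming from $\sigma$ and an isometry $\phi_2:T\to\tilde T$ coming from $\tau$, and I will set $h_T=\phi_2^{-1}\circ f_T\circ\phi_1$ for a diffeomorphism $f_T:\tilde T\to\tilde T$ to be constructed. The essential difference from the Euclidean case is that $\tilde T$ is noncompact: its vertices are ideal, the edges are complete geodesics, and the target factor $\rho$ is allowed to blow up at the punctures. Consequently there is no single affine model to copy, and the energy must be examined carefully in a neighborhood of each cusp.

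First I would build $f_T$ near the ideal vertices. Fixing a vertex $v$, I pass to the local models of Definition~\ref{local vertex hyp} for both $\sigma$ and $\tau$, placing $v$ at $\infty$ so that the faces incident to $v$ are glued by horizontal translations. On each cusp wedge I prescribe $f_T$ to be a hyperbolic isometry fixing $\infty$, that is, a map of the form $z\mapsto az+b$, with $a$ chosen to match the domain and target cusp widths. The requirement that the pieces $h_T$ and $h_{T'}$ agree along a common edge becomes, in arc-length coordinates along that edge, the condition that certain translation constants differ by the sum of the corresponding $\sigma$- and $\tau$-shift parameters. This is a linear system, one equation per edge incidence, whose solvability obstruction is the sum of shift parameters around the loops of $lk(v)$; since both $\sigma$ and $\tau$ are complete these sums vanish, so compatible constants exist. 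Over the compact core of $\tilde T$ I then interpolate smoothly between the three prescribed cusp maps to obtain a diffeomorphism $f_T$ of $\tilde T$, and the resulting maps $h_T$ glue to a simplicial map $H$ that restricts to a diffeomorphism on each face.

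It remains to show $E(H)<\infty$. I would split each face into a compact core and three cusp neighborhoods. On the core, $f_T$ is a smooth diffeomorphism between compact regions carrying smooth metrics, so its energy is finite. On a cusp neighborhood $f_T$ is, by construction, an isometry between the underlying hyperbolic metrics $\sigma$ and $\tau$; since $\rho$ is a scalar function, the energy density of $H$ measured with respect to $\sigma$ and $\rho^2\tau$ satisfies, near the cusp,
\[
\abs{\nabla H}^2 = (\rho\circ H)^2\,\operatorname{tr}_\sigma H^*\tau = 2(\rho\circ H)^2 .
\]
Integrating and changing variables by the isometry $H$ bounds the energy on this neighborhood by a constant times $\int\rho^2\,d\mu_\tau$ over the corresponding cusp of the target. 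Because $\rho^2\tau$ is complete and of finite area --- the conformal factor cannot blow up fast enough to incur infinite area --- this integral is finite. Summing over the three cusps and the finitely many faces gives $E(H)<\infty$, so $H\in\mathcal D(\rho)$.

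The main obstacle is concentrated entirely at the ideal vertices. Away from the cusps the argument is the hyperbolic analogue of the affine construction and presents no difficulty; the two genuinely new points are (i) arranging that the cusp-normalized local maps glue across edges, which is exactly where completeness of $\sigma$ and $\tau$ enters, and (ii) controlling the energy near a cusp, where the saving observation is that the energy density is essentially $\rho^2$, so that finiteness of the target area delivers finite energy in spite of the possible blow-up of $\rho$.
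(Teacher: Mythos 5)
Your overall architecture (cusp neighborhoods plus a compact core, with the finite-area condition controlling the energy near the punctures) matches the paper's, but the construction at the cusps has a genuine gap: the isometric cusp maps $z\mapsto az+b$ do not glue along the edges. In the vertex local models of Definition~\ref{local vertex hyp}, the face $T_j$ is identified with $r_{j,\sigma}\tilde{T}$ in the domain and $r_{j,\tau}\tilde{T}$ in the target, and matching the cusp widths forces $a_j=r_{j,\tau}/r_{j,\sigma}$. Restricted to a vertical edge shared by $T_j$ and $T_k$, the map prescribed from the $T_j$ side sends a point at height $y$ to height $a_jy$, while the map prescribed from the $T_k$ side sends it to height $a_ky$; since adjacent charts in a local model differ only by horizontal translations (in both domain and target, so the $y$-coordinates of edge points agree across charts), the two prescriptions coincide only if $a_j=a_k$, i.e.\ only if the $\sigma$-shift and the $\tau$-shift along that edge are equal. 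This is an edge-by-edge condition, not a condition on sums of shifts around loops of $lk(v)$, so completeness does not rescue it; and the translation constants $b_j$ cannot repair it, because the mismatch is multiplicative in $y$ rather than additive. (If the isometric construction did glue, any two complete ideal hyperbolic structures would admit a simplicial isometry near each vertex, which would collapse the shear coordinates.)

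The fix --- and what the paper actually does --- is to take the cusp map to be the horizontal stretch $(x,y)\mapsto\bigl(\tfrac{r_{j,\tau}}{r_{j,\sigma}}x,\,y\bigr)$ in the local-model coordinates. This preserves the height coordinate, so the prescriptions from adjacent faces automatically agree on the shared vertical edges; it is no longer an isometry, but its energy density with respect to $\tau$ is the constant $(r_{j,\tau}/r_{j,\sigma})^2+1$, so your finite-area argument survives with $2\rho^2$ replaced by $\rho^2\bigl((r_{j,\tau}/r_{j,\sigma})^2+1\bigr)$ and condition~\ref{fix volume} of Definition~\ref{conformal} still delivers finite energy. Separately, your claim that one can ``interpolate smoothly over the compact core'' to obtain a diffeomorphism is asserted rather than proved; the paper produces the diffeomorphism on the core by solving a Dirichlet problem and invoking Theorem 5.1.1 of \cite{J}, together with Theorem 1.2 of \cite{IKO} to replace the result by a finite-energy diffeomorphism on each face.
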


\begin{proof}
We will construct a map as in the proof of Theorem 16 in \cite{FG}, by defining the map in a neighborhood of each vertex and filling in the compact remainder later on. Most of the proof is identical to the proof in \cite{FG}, so some of the details will be omitted.
	
Fix a vertex $v$ of $X$ and let $\{\phi_{j,\sigma}:T_j\to r_{j,\sigma}\tilde{T}\}$ be a local model for $v$ with respect to the metric $\sigma$, and similarly $\{\phi_{j,\tau}:T_j\to r_{j,\tau}\tilde{T}\}$ for the metric $\tau$. Consider the neighborhoods $N_*(v) = \{\phi_{j,*}^{-1}(z)\in r_{j,*}\tilde{T} \mid Im(z)>2\}$ for metrics $*=\sigma,\tau$, which are neighborhoods of the punctured vertices foliated by horocycles. Map $N_\sigma(v)$ to $N_\tau(v)$ as follows: for $x\in N_\sigma(v)\cap T_j$, define
\[
h_v(x) = \phi_{j,\tau}^{-1}\left(\frac{r_{j,\tau}}{r_{j,\sigma}}\phi^1_{j,\sigma}(x)+i \phi_{j,\sigma}^2(x)\right),
\]
where $\phi_{j,\sigma}=\phi_{j,\sigma}^1+i\phi_{j,\sigma}^2$.

By definition of the local models of vertices, the map $h_v$ is well defined on $N_\sigma(v)$. Note also that this map is a smooth diffeomorphism on the interior of each $N_\sigma(v)\cap T_j$ and injective on the intersection of each edge with $N_\sigma(v)$. In the coordinates defined by the local models, the map $h_v\vert_{T_j}$ is a horizontal stretch, whose energy density with respect to the hyperbolic metric $\tau$ is $(\frac{r_{j,\tau}}{r_{j,\sigma}})^2+1$.
Replacing the metric $\tau$ on the target with $\rho^2\tau$, the energy  density of $h_v\vert_{T_j}$ is $\rho^2((\frac{r_{j,\tau}}{r_{j,\sigma}})^2+1)$.
By condition~\ref{fix volume} in Definition~\ref{conformal}, the energy of $h_v$ on $N_\sigma(v)$ is finite, as there are finitely many faces incident to $v$ and each face has finite area with respect to $\rho^2\tau$.

Now define $h_w$ in $N_\sigma(w)$ for each vertex $w$ in $X$. The neighborhoods $N_\sigma(w)$ were chosen so that neighborhoods of different vertices do not intersect, so we may define $H = h_v$ in $N_\sigma(v)$ for each vertex $v$. Since there are finitely many vertices, we have used only finitely much energy in our construction thus far.

Let us now extend this map to the compact remainder of $X$. First fix an edge $e$ with endpoints $v$ and $w$. On $e\cap N_\sigma(v)$ let $H = h_v$ and on $e\cap N_\sigma(w)$ let $H = h_w$. On the remaining segment of $e$ define $H$ so that $H\vert_e$ is a smooth diffeomorphism. Do the same construction on each edge of $X$.

To finish the proof we need only to define $H$ in the compact remainder of the interiors of each face, namely in $T_j\backslash\cup_vN_\sigma(v)$. As $T_j\backslash\cup_vN_\sigma(v)$ is compact, any smooth diffeomorphism will have bounded energy density  and hence finite energy. One strategy to define $H$ in the compact remainder is as follows.

Let $X'\subset X$ be a compact subset of $X$ so that for each triangle $T_j$, $X'\cap T_j$ is convex with respect to the metric $\rho^2\tau$, with smooth boundary consisting of segments of edges of $T_j$ and curves within the neighborhoods $N_\tau(v)$ around the vertices of $T_j$. For an example of such a set, see the beginning of Section~\ref{SExist}. Let $\gamma$ be a curve in $X\cap T_j$ so that $H(\gamma) = \partial(X'\cap T_j)$. On the edges and on the neighborhoods $N_\sigma(v)$, $H$ is a smooth diffeomorphism, so $\gamma$ is a smooth curve bounding a compact region $\Omega_j\subset T_j$. Construct the solution $h_j:\Omega_j\to X'\cap T_j$ to the Dirichlet problem with boundary values given by the values of $H$ so far. That is, $h_j$ is a harmonic map with $h_j\vert_\gamma = H\vert_\gamma$. By Theorem 5.1.1 of \cite{J}, $h_j$ is a diffeomorphism. Define $H$ on each $\Omega_j$ (redefining on the intersection with the $N_\sigma(v)$ neighborhoods if necessary) by $H=h_j$.

The map constructed, following Theorem 16 in \cite{FG}, can be approximated by a finite energy map that is a diffeomorphism on each face, using \cite{IKO}, Theorem 1.2.

\end{proof}

\section{Existence of minimizing maps}\label{SExist}

In this section, we will prove the existence of maps from $(\xs, \sigma)$ to $(\xs, \rho^2\tau)$ which minimizes energy among the classes $W^{1,2}(\rho)$ and $\mathcal{D}(\rho)$ respectively, where $\sigma, \tau$ are either both Euclidean or both ideal hyperbolic metrics.

In order to achieve sufficient regularity in our minimization scheme, we will have to do a harmonic replacement argument. We will first describe our harmonic replacement scheme. In the case that we are minimizing in the class $\mathcal{D}(\rho)$ we will use Proposition 1 of \cite{JS}, so we will need some convex sets with smooth boundary in the target.

Whether we are working with conformally Euclidean metrics on the compact space $X$ or conformally hyperbolic metrics on the non-compact space $\xs$, we will think of our maps as maps $f:\xs\to\xs$ since we always preserve the vertices of the complex. Much of our construction, even in the case of conformally Euclidean metrics on $X$, break down near the vertices, so the perspective we take is needed in both cases.

We construct a compact exhaustion $X_n$ of $\xs$ consisting of sets analogous to $X'$ from the proof of Theorem~\ref{finite energy hyperbolic}. We require that for each face $T$ of $X$, $X_n\cap T$ is convex with respect to the metric $\rho^2\tau$ and has smooth boundary in $T$ consisting of segments of the edges of $T$ along with curves in the interior of $T$. We also require $X_n\subset X_{n+1}$ and $\cup_{n=1}^\infty X_n = \xs$. To construct such a set, cut off each vertex using geodesics in the incident faces. This leaves convex sets in each face, but the boundaries of the convex sets have corners. Simply approximate from within by a smooth convex set. To construct the whole sequence $X_n$, cut off a smaller neighborhood of each vertex for each successive set in our exhaustion.

Before we proceed we'll need some preliminary results.

\begin{lemma}\label{Wcomplete}
	Let $G\in W^{1,2}(\rho)$ be a given continuous map.
	Let $\{u_k\}_k\subset W^{1,2}(\rho)$  be an $L^2$ Cauchy sequence with $u_k\equiv G$ on $X\backslash X_n$ for each $k$.
	Suppose the energies of $u_k$ are uniformly bounded.
	Then the sequence converges in $L^2$ to a function $u\in W^{1,2}(\rho)$ with $u\equiv G$ on $X\backslash X_n$.
\end{lemma}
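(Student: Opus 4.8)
The plan is to reduce everything to a fixed compact region and then invoke classical Sobolev compactness face by face. Since $u_k\equiv G$ on $X\setminus X_n$ for every $k$, the only place the maps vary is the compact set $X_n$, and it suffices to analyze the restriction $u_k\vert_T$ on each compact piece $X_n\cap T$. Fixing a face $T$ and using the coordinates induced by $\sigma$, I would view each $u_k\vert_T$ as a map from a subset of $\R^2$ (Euclidean case) or $\Hyp^2$ (hyperbolic case) into the closed coordinate triangle. On the compact set $X_n\cap T$ we stay uniformly away from the vertices, so the target metric $\rho^2\tau$ is bi-Lipschitz comparable to the coordinate Euclidean metric there; consequently the intrinsic $L^2$ distance is comparable to the coordinate $L^2$ distance, and the energy $E(u_k\vert_T)$ is comparable to the coordinate Dirichlet integral $\int_{X_n\cap T}\abs{\nabla u_k}^2$. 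This reduces the statement to a completeness assertion for ordinary vector-valued Sobolev maps on a bounded domain.

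With this reduction, first I would extract the $L^2$ limit. The hypothesis that $\{u_k\}$ is $L^2$ Cauchy, together with completeness of $L^2$, produces a measurable map $u$ with $u_k\to u$ in $L^2$; on $X\setminus X_n$ the limit equals $G$ almost everywhere since the sequence is constant there. Next I would upgrade this to Sobolev regularity: the uniform energy bound gives a uniform bound on $\int_{X_n\cap T}\abs{\nabla u_k}^2$, and combined with the uniform $L^2$ bound this yields a uniform $W^{1,2}$ bound on each face. By reflexivity of $W^{1,2}$ a subsequence converges weakly in $W^{1,2}$ to some $v\vert_T$, and by the Rellich--Kondrachov theorem the same subsequence converges strongly in $L^2$; uniqueness of $L^2$ limits forces $v=u$ almost everywhere. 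Hence $u\vert_T\in W^{1,2}(T,T)$ for every face, with $E(u)\le\liminf_k E(u_k)<\infty$ by weak lower semicontinuity of the Dirichlet energy.

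It remains to verify that $u$ is genuinely an element of $W^{1,2}(\rho)$, i.e. that it is simplicial. The face condition is immediate: each $u_k\vert_T$ takes values in the closed face $T$, which is closed, so the $L^2$ limit $u\vert_T$ is $T$-valued almost everywhere. The vertex condition holds because $u=G$ on $X\setminus X_n$ and $G$ is simplicial. The delicate point is the edge condition. Here I would use that the trace operator $W^{1,2}(T)\to L^2(e)$ is bounded and weakly continuous: the traces of $u_k\vert_T$ lie in the coordinate segment representing $e$, which is a closed convex subset of the coordinate plane, and a weak $L^2$ limit of maps valued in a closed convex set is again valued in that set. Thus the trace of $u\vert_T$ lies in $e$, and since the $u_k$ already agree along any shared edge (being simplicial maps on $X$), the limiting traces from the two incident faces coincide, so $u$ is a well-defined simplicial map on $\xs$.

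The main obstacle is this last step: preserving the simplicial structure, and especially the edge-to-edge requirement, in the passage to the limit, since $L^2$ convergence alone says nothing about boundary behavior. The weak $W^{1,2}$ bound obtained on the compact region $X_n$ is exactly what makes the trace argument work, and the fact that the maps are frozen near the vertices is what lets us avoid the degeneration of $\rho^2\tau$ at the cusps or cone points.
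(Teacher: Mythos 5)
Your argument is sound in the Euclidean case, but it has a genuine gap in the hyperbolic case, and the lemma is needed for both. The gap is in the very first reduction: you argue that because $X_n\cap T$ is compact in the \emph{domain}, the target metric $\rho^2\tau$ is bi-Lipschitz to the coordinate Euclidean metric ``there,'' so that intrinsic $L^2$ distances and energies are comparable to coordinate ones. But the relevant comparison must hold on the \emph{image} $u_k(X_n\cap T)$, not on the domain, and when $\tau$ is an ideal hyperbolic metric the target face is a non-compact ideal triangle whose metric $\frac{dx^2+dy^2}{y^2}$ is not comparable to $dx^2+dy^2$ near the ideal vertices. Nothing in the hypotheses confines the images to a fixed compact subset of the target face: a finite-energy $W^{1,2}$ map in two dimensions with bounded boundary data can still have unbounded image (the $\log\log$-type examples), so the uniform energy bound with respect to $\rho^2\tau$ does not yield a uniform bound on the coordinate Dirichlet integral, and intrinsic $L^2$ Cauchyness does not transfer to coordinate $L^2$ Cauchyness. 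Consequently the passage to ``ordinary vector-valued Sobolev maps on a bounded domain,'' and everything downstream of it (reflexivity, Rellich--Kondrachov, weak continuity of the linear trace operator, Mazur), is not justified in the hyperbolic case.

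The paper does not reprove the lemma but invokes Lemma 17 of \cite{FG}, which runs the argument intrinsically in the Korevaar--Schoen framework of \cite{KS1}: $L^2(\Omega,Y)$ is complete because $(\xs,\rho^2\tau)$ is complete (condition \ref{complete} of Definition~\ref{conformal}), the energy is lower semicontinuous under intrinsic $L^2$ convergence, and the traces of $u_k$ on the edges converge to the trace of the limit by the trace theory of \cite{KS1} (Section 1.12) given the uniform energy bound, so that edge-valuedness and agreement of traces across adjacent faces pass to the limit since edges are closed. Your treatment of the simplicial structure (closedness of faces for the a.e.\ limit, the boundary condition on $X\backslash X_n$, and preservation of edge traces) is the right list of things to check, but to make it work uniformly in both cases you should replace the coordinate reduction by these intrinsic statements, or else first prove a uniform bound on $\sup_k\operatorname{diam}\big(u_k(X_n)\big)$, which your proposal does not supply.
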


The proof of this result can be found in Lemma 17 of \cite{FG}. Even though it is stated for maps between ideal hyperbolic simplicial complexes, the same proof holds for a Euclidean metric $\tau$, or for a metric $\rho^2\tau$ conformal to either a Euclidean or ideal hyperbolic metric.

\begin{theorem}\label{poincare}
	\textbf{(Poincar\'e inequality for complexes).} 
	Let $K$ be a bounded connected compact subset of $X$ and $f\in W^{1,2}(\xs,\sigma)$ with compact support in $K$.
	Then there exists a constant $C$ depending only on $K$ and $X$ such that
	\[
	\int_K u^2d\mu_{\sigma}\leq C\int_K \abs{\nabla u}^2d\mu_{\sigma}.
	\]
\end{theorem}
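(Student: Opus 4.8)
The plan is to argue by contradiction using a compactness (Rellich--Kondrachov) argument, which is well suited to producing a constant depending only on $K$, $X$, and the fixed metric $\sigma$ without tracking it explicitly. Write $u$ for the function (the statement's $f$). The only structural input I will use beyond finite energy is that, having compact support in $K$, $u$ vanishes on $\xs\setminus\operatorname{supp}u$, a nonempty open set (nonempty because $K$ is a proper compact subset of the non-compact $\xs$). Set
\[
\lambda = \inf\Big\{\int_K\abs{\nabla v}^2\,d\mu_\sigma : v\in W^{1,2}(\xs,\sigma),\ \operatorname{supp}v\subset K,\ \int_K v^2\,d\mu_\sigma = 1\Big\}.
\]
It suffices to show $\lambda>0$, for then $C=\lambda^{-1}$ works. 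So suppose $\lambda=0$ and choose a minimizing sequence $u_k$ with $\int_K u_k^2\,d\mu_\sigma=1$ and $\int_K\abs{\nabla u_k}^2\,d\mu_\sigma\to 0$.

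The sequence is bounded in $W^{1,2}$ on each face: since $u_k$ vanishes off $K$, for every face $T$ we have $\int_T u_k^2\,d\mu_\sigma\le 1$ and $\int_T\abs{\nabla u_k}^2\,d\mu_\sigma\le\int_K\abs{\nabla u_k}^2\,d\mu_\sigma$, which is bounded. On the compact set $K\cap\bar T$ the metric $\sigma$ is uniformly comparable to the standard (flat, resp.\ Poincar\'e) metric --- the compactness of $K$ keeps us away from the cusps in the hyperbolic case --- so each closed face, being a Lipschitz domain, admits the compact embedding $W^{1,2}(T)\hookrightarrow\hookrightarrow L^2(T)$. Extracting convergent subsequences face by face and diagonalizing over the finitely many faces, I obtain a subsequence (not relabeled) converging in $L^2(K)$ and weakly in $W^{1,2}$ on each face to a limit $u$. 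By Lemma~\ref{Wcomplete} (or directly, since weak $W^{1,2}$ convergence preserves the matching of traces across edges) $u\in W^{1,2}(\xs,\sigma)$, it still has support in $K$, and $\int_K u^2\,d\mu_\sigma=1$. Weak lower semicontinuity of the Dirichlet energy on each of the finitely many faces gives $\int_K\abs{\nabla u}^2\,d\mu_\sigma\le\liminf_k\int_K\abs{\nabla u_k}^2\,d\mu_\sigma=0$, so $\nabla u=0$ almost everywhere and $u$ is constant on each face.

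It remains to conclude $u\equiv 0$. Because the faces are glued along edges and $u$ matches across them, $u$ is locally constant on the connected set $\xs$ (recall $\xs$ is connected, i.e.\ $X$ is $1$-chainable), hence equal to a single constant $c$; but $u$ vanishes on the nonempty open set $\xs\setminus\operatorname{supp}u$, forcing $c=0$ and contradicting $\int_K u^2\,d\mu_\sigma=1$. Thus $\lambda>0$ and the inequality holds with $C=\lambda^{-1}$, depending only on $K$, $X$, and $\sigma$. I expect the main obstacle to be the compactness step on the singular space: one must justify the face-by-face Rellich embedding via a uniform metric comparison on the compact $K$ and, crucially, verify that the $L^2$-limit is a genuine element of $W^{1,2}(\xs,\sigma)$ with coherent edge traces, since it is exactly this trace-matching that lets the vanishing on an open set propagate through the connected complex and annihilate the limiting constant. (A direct alternative --- representing $u$ pointwise as a line integral of $\nabla u$ starting from the region where it vanishes and then applying Cauchy--Schwarz and Fubini --- would give an explicit constant, but needs the same connectivity to chain paths across edges through $K$, and so is no easier at the crux.)
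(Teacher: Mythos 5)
Your argument is essentially correct, but it is genuinely different from what the paper does: the paper offers no proof at all for this statement, deferring entirely to Theorem 2.6 of Daskalopoulos--Mese \cite{DM2}. Your route --- normalizing $\int_K u^2\,d\mu_\sigma=1$, extracting an $L^2$-convergent, weakly $W^{1,2}$-convergent subsequence face by face via Rellich (legitimate here because the finitely many faces, intersected with the compact $K$, carry a metric uniformly comparable to the flat one, even in the ideal hyperbolic case), and killing the limiting constant by connectedness of $\xs$ and the vanishing of $u$ off $K$ --- is the standard ``soft'' compactness proof. What the citation buys is an argument adapted to the Korevaar--Schoen framework used throughout \cite{DM2} (and, in principle, quantitative control of the constant); what your proof buys is self-containedness, at the price of yielding no explicit constant, which is harmless since only existence of $C$ is used in Theorem~\ref{dirichletW}. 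Two points you correctly identify as the crux deserve emphasis, since they are really repairs of the statement rather than steps of the proof: (i) the inequality is false for nonzero constants, so one must read the hypotheses as forcing $\xs\setminus K\neq\emptyset$ (true in the intended application, where the support lies in $G^{-1}(X_n)$ and misses neighborhoods of the vertices); and (ii) the paper's definition of $W^{1,2}(\xs,\sigma)$ literally only requires $f\vert_T\in W^{1,2}(T,\sigma)$ for each face, which does not by itself force the traces from adjacent faces to agree --- without that matching the theorem fails (take the indicator of a single face), so your use of trace coherence, preserved under weak convergence by compactness of the trace operator, is relying on the intended (Korevaar--Schoen) reading of the function space rather than the printed one.
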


The proof of this Theorem can be found in \cite{DM2}, Theorem 2.6.

\subsection{Harmonic Replacement}

Here we will construct harmonic maps on compact subsets of $\xs$ to use in our later arguments.

\begin{theorem}\label{dirichletW}
	Given $G\in W^{1,2}(\rho)$ and a set $X_n$ in our compact exhaustion of $X$, there exists $u\in W^{1,2}(\rho)$ minimizing energy among all maps $v\in W^{1,2}(\rho)$ with $v\equiv G$ on $X\backslash G^{-1}(X_n)$.

\end{theorem}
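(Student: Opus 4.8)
The plan is to run the direct method in the calculus of variations, using the non-positive curvature of the target to supply the compactness that a linear setting would get from weak convergence. The admissible class---maps $v\in W^{1,2}(\rho)$ with $v\equiv G$ off the fixed compact set $K=G^{-1}(X_n)$---is nonempty since it contains $G$ itself, so $m:=\inf E$ over this class is a finite nonnegative number. First I would choose a minimizing sequence $\{u_k\}\subset W^{1,2}(\rho)$ with $E(u_k)\to m$ and all $u_k\equiv G$ on $\xs\backslash K$; in particular their energies are uniformly bounded.

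The heart of the argument is a midpoint/convexity estimate. Given $u_j,u_k$, define the midpoint map $m_{jk}$ by letting $m_{jk}(x)$ be the midpoint of the unique geodesic in $(\xs,\rho^2\tau)$ from $u_j(x)$ to $u_k(x)$. I would first check that $m_{jk}$ is again admissible. Because the maps are simplicial and each closed face, endowed with $\rho^2\tau$, is a complete, simply connected NPC space (Proposition~\ref{conformal curvature}) that is moreover convex (condition~\ref{convex} of Definition~\ref{conformal})---hence a CAT(0) space by Cartan--Hadamard---the connecting geodesic stays inside the common face, so $m_{jk}$ maps each face to itself. On an edge $e$ both values lie on $e$, and by Proposition~\ref{geodesic} $e$ is itself a geodesic, so the unique connecting geodesic, and thus its midpoint, lies on $e$; hence $m_{jk}$ is simplicial. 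Off $K$ we have $u_j=u_k=G$, so $m_{jk}=G$ there, and the standard regularity of midpoint maps into NPC targets gives $m_{jk}\vert_T\in W^{1,2}$. Therefore $m_{jk}\in W^{1,2}(\rho)$ is an admissible competitor.

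Working face by face in the CAT(0) target, the quadrilateral comparison (the Korevaar--Schoen convexity inequality for energy) yields a constant $c>0$ with
\[
E(m_{jk})\le \tfrac12\big(E(u_j)+E(u_k)\big)-c\int_{\xs}\abs{\nabla \rho_{jk}}^2\,d\mu_\sigma,\qquad \rho_{jk}(x):=d_{\rho^2\tau}\big(u_j(x),u_k(x)\big).
\]
Since $m_{jk}$ is admissible, $E(m_{jk})\ge m$, and because $E(u_j),E(u_k)\to m$ the right-hand side forces $\int\abs{\nabla\rho_{jk}}^2\,d\mu_\sigma\to0$ as $j,k\to\infty$. The function $\rho_{jk}$ is supported in the connected compact set $K$, so the Poincar\'e inequality of Theorem~\ref{poincare} gives $\int_K\rho_{jk}^2\,d\mu_\sigma\le C\int_K\abs{\nabla\rho_{jk}}^2\,d\mu_\sigma\to0$; that is, $\{u_k\}$ is $L^2$-Cauchy.

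Finally I would assemble the limit. By Lemma~\ref{Wcomplete} the $L^2$-limit $u$ lies in $W^{1,2}(\rho)$ and still satisfies $u\equiv G$ off $K$, so $u$ is admissible. Lower semicontinuity of $E$ under $L^2$ convergence---again a standard property of NPC targets---gives $E(u)\le\liminf_k E(u_k)=m$, while admissibility gives $E(u)\ge m$; hence $E(u)=m$ and $u$ is the desired minimizer. The main obstacle I anticipate is not the abstract scheme but verifying its two singular-space inputs in this simplicial setting: the energy-convexity inequality and the lower semicontinuity of $E$ across the edges, where the target is only locally CAT(0). Both must be justified through the decomposition into faces, using Proposition~\ref{geodesic} and condition~\ref{convex} to guarantee that the midpoint construction stays inside each face and respects the simplicial structure.
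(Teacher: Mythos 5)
Your proposal is correct and follows essentially the same route as the paper: a minimizing sequence, the geodesic midpoint map (shown to be admissible via the convexity of faces and the geodesicity of edges from Proposition~\ref{geodesic}), the Korevaar--Schoen energy convexity inequality, the Poincar\'e inequality of Theorem~\ref{poincare} to get $L^2$-Cauchyness, Lemma~\ref{Wcomplete} for the limit, and lower semicontinuity of energy. The only cosmetic difference is that the paper first replaces each $v_k$ by a face-by-face harmonic (Dirichlet) replacement before running the midpoint argument, a step that is not essential to the existence conclusion.
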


\begin{proof}
	
	The proof of this result also follows as in \cite{FG}, Theorem 19.

	Consider the class of maps $W^{1,2}_G$ of maps $v\in W^{1,2}(\rho)$ with $v\equiv G$ on $X\backslash G^{-1}(X_n)$. We suppress the dependence of $W^{1,2}_G(\rho)$ on the set $X_n$ in the notation.
	Let $\{v_k\}\subset W^{1,2}_G(\rho)$ be an energy minimizing sequence. 

	First replace $v_k$ with $u_k$, where $u_k = v_k = G$ on $X\backslash G^{-1}(X_n)$, and for each face $T$, $u_k$ solves the Dirichlet problem on $T\cap G^{-1}(X_n)$ with boundary data given by $v_k$. As $u_k$ solves a Dirichlet problem where it doesn't coincide with $v_k$, the energy of $u_k$ is no more than the energy of $v_k$, and $u_k$ also lies in $W^{1,2}_G(\rho)$ so it is also a minimizing sequence.	
	
	We will now show that $\{u_k\}$ is an $L^2$-Cauchy sequence. Define $u_{k,\ell}(x)$ to be the midpoint of the geodesic joining $u_k(x)$ and $u_\ell(x)$.
	Note that $u_{k,\ell}$ is well defined. 

	If $x$ lies in the interior of a face $T$, then by the convexity of the faces (condition \ref{convex} in Definition~\ref{conformal}), $u_k(x), u_\ell(x)\in T$ as well. Since by Corollary~\ref{npcconf} $X_n\cap T$ is NPC, there exists a unique geodesic joining $u_k(x)$ and $u_\ell(x)$ and therefore $u_{k,\ell}$ is well defined in the interior of each face. 
	If $x$ lies in an edge $e$, $u_k(x)$ and $u_\ell(x)$ also lie in that edge and therefore $u_{k,\ell}(x)\in e$ since edges are geodesics by Proposition~\ref{geodesic}.
	Finally, it is clear that $u_{k,\ell}=G$ on $G^{-1}(X_n)$ since both $u_k$ and $u_\ell$ do.
	It is also easy to check that $u_{k,\ell}\in L^2(\rho)$.

	From \cite{KS1} we have equation \eqref{bound}:
	\begin{equation}\label{bound}
	2E(u_{k,\ell})\leq E(u_k)+E(u_\ell)-\dfrac{1}{2}\int_{G^{-1}(X_n)}\abs{\nabla d_{\rho^2\tau}(u_k,u_\ell)}^2\leq 2K.
	\end{equation}	
	Since $\{u_k\}$ is a minimizing sequence, from \eqref{bound} we see that 	
	\[
	\lim_{k,\ell\to\infty}\int_{X_n}\abs{\nabla d_{\rho^2\tau}(u_k,u_\ell)}^2=0.
	\]
	Since $d_{\rho^2\tau}(u_k, u_\ell)\in W^{1,2}(\xs,\sigma)$ with compact support (Theorem 1.12.2 \cite{KS1}), we can use the Poincar\'e inequality of Theorem~\ref{poincare} and therefore the sequence $\{u_k\}$ is an $L^2$-Cauchy sequence. Morever the energies of $u_k$ are uniformly bounded by $E(u_k)\leq E(G)$.
	By Lemma~\ref{Wcomplete}, $\{u_k\}$ converges in $L^2(\rho)$ to a function $u\in W^{1,2}_G(\rho)$. 
	
	By semicontinuity of the energy, 
	\[
	E(u)\leq \liminf_k E(u_k)= \inf_{v\in W^{1,2}_G}E(v).
	\]
	Since $u\in W^{1,2}_G(\rho)$, we have $E(u) = \inf_{v\in W^{1,2}_G}E(v)$ as desired.

\end{proof}

We also have the same result for maps in the class $\overline{\mathcal{D}(\rho)}$.

\begin{theorem}\label{dirichletD}
	Given $G\in\mathcal{D}(\rho)$ and a compact set $X_n$ as in Theorem~\ref{dirichletW}, there exists $u\in \overline{\mathcal{D}(\rho)}$ minimizing energy among all maps in $v\in\overline{\mathcal{D}(\rho)}$ with $v\equiv G$ on $G^{-1}(X_n)$.
	Moreover, for each face $T$, the restriction of $u$ to the interior of $T\cap G^{-1}(X_n)$ is a harmonic diffeomorphism.
\end{theorem}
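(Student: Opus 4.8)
The plan is to avoid building a fresh minimizing sequence and instead show that the energy minimizer already produced in Theorem~\ref{dirichletW} lies in $\overline{\mathcal{D}(\rho)}$, so that the very same map serves as the minimizer over the smaller class. Write $W^{1,2}_G(\rho)$ for the class of $v\in W^{1,2}(\rho)$ agreeing with $G$ off $G^{-1}(X_n)$, and $\overline{\mathcal{D}(\rho)}_G$ for the corresponding constrained subclass of $\overline{\mathcal{D}(\rho)}$. Since $G\in\mathcal{D}(\rho)\subseteq W^{1,2}(\rho)$, Theorem~\ref{dirichletW} applies and yields $u\in W^{1,2}_G(\rho)$ with $E(u)=\inf_{v\in W^{1,2}_G(\rho)}E(v)$. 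Setting $\Omega_T:=T\cap G^{-1}(X_n)$, I first observe that $u\vert_{\Omega_T}$ is the energy minimizer among all maps into the target face with its own boundary trace: otherwise one could lower the energy on a single $\Omega_T$ without changing the boundary values, contradicting global minimality. Hence $u\vert_{\Omega_T}$ is harmonic on the interior of $\Omega_T$, and since the target region $X_n\cap T$ is convex (Definition~\ref{conformal}, condition~\ref{convex}) and NPC (Corollary~\ref{npcconf}), $u$ maps $\Omega_T$ into $X_n\cap T$.

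The heart of the argument is to upgrade $u\vert_{\Omega_T}$ to a diffeomorphism via Proposition~1 of \cite{JS}, which asserts that an energy-minimizing map from a planar domain onto a convex region of an NPC surface whose boundary trace is a diffeomorphism onto the boundary of that convex region is itself a diffeomorphism. The target $X_n\cap T$ is convex and NPC, so what must be checked is that the boundary map $u\vert_{\partial\Omega_T}\colon\partial\Omega_T\to\partial(X_n\cap T)$ is a diffeomorphism. The boundary $\partial\Omega_T$ splits into the interior arc $\gamma=T\cap\partial G^{-1}(X_n)$, on which $u=G$ is a diffeomorphism onto $\partial X_n\cap T$ because $G\in\mathcal{D}(\rho)$, together with the edge segments, along which $u$ takes values in the edges of $T$. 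Each edge is a geodesic of $\rho^2\tau$ (Proposition~\ref{geodesic}), so the edge portions are geodesic arcs of $\partial(X_n\cap T)$; the trace of the minimizer along them is monotone and matches $G$ at the corners by continuity. Once the boundary trace is known to be a diffeomorphism onto $\partial(X_n\cap T)$, Proposition~1 of \cite{JS} gives that $u\vert_{\Omega_T}$ is a harmonic diffeomorphism, which is exactly the \emph{moreover} assertion. This follows the argument already carried out for the corresponding result in \cite{FG}.

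With the face-wise diffeomorphism property established, $u$ is a diffeomorphism on the interior of each $\Omega_T$ and coincides with the diffeomorphism $G$ on the remainder of each face, failing to be $C^1$ only across the gluing curves $\gamma$. Smoothing these kinks exhibits $u$ as a limit of maps that are diffeomorphisms on each face, so $u\in\overline{\mathcal{D}(\rho)}_G$; this is precisely why the closure, rather than $\mathcal{D}(\rho)$ itself, appears in the statement. Finally I compare the two infima. Since $\overline{\mathcal{D}(\rho)}_G\subseteq W^{1,2}_G(\rho)$, restricting to the smaller class can only raise the infimum, so $\inf_{v\in\overline{\mathcal{D}(\rho)}_G}E(v)\geq\inf_{v\in W^{1,2}_G(\rho)}E(v)=E(u)$, while $u\in\overline{\mathcal{D}(\rho)}_G$ gives the reverse inequality. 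Hence $E(u)=\inf_{v\in\overline{\mathcal{D}(\rho)}_G}E(v)$ and $u$ is the desired minimizer.

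The main obstacle is the middle step: verifying that the boundary trace of the minimizer is a genuine diffeomorphism onto $\partial(X_n\cap T)$, and in particular that $u$ neither folds nor becomes stationary along the free (edge) portions of the boundary or at the corners where $\gamma$ meets the edges. This monotonicity and no-folding behavior on the free boundary is the delicate input that makes Proposition~1 of \cite{JS} applicable; the convexity of the faces and the fact that the edges are geodesics (Proposition~\ref{geodesic} and Corollary~\ref{npcconf}) are exactly what control it, following the strategy of \cite{FG}.
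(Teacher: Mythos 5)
There is a genuine gap at the step you yourself flag as the main obstacle, and it is not a technicality that the cited convexity and geodesic properties resolve. Your strategy is to show that the minimizer $u$ of Theorem~\ref{dirichletW}, taken over all of $W^{1,2}_G(\rho)$, already lies in $\overline{\mathcal{D}(\rho)}$, by applying Proposition~1 of \cite{JS} to $u\vert_{\Omega_T}$. That proposition requires the boundary trace to be a homeomorphism onto $\partial(X_n\cap T)$. On the arc $\gamma$ this is fine since $u=G$ there, but on the edge portions of $\partial\Omega_T$ the map $u$ solves a \emph{partially free} boundary problem: it is only constrained to keep each edge on itself, and its trace there is determined by the balancing condition, not prescribed. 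Your assertion that this trace ``is monotone and matches $G$ at the corners by continuity'' is exactly the claim that needs proof, and nothing in Proposition~\ref{geodesic} or Corollary~\ref{npcconf} rules out folding or stationarity of the unconstrained minimizer along the free boundary. This is precisely why the paper does not argue this way: its proof (deferring to Theorem~20 of \cite{FG}) runs the minimization \emph{inside} $\mathcal{D}_G(\rho)$, where every competitor $v_k$ already restricts to a diffeomorphism of each face, so its trace on $\partial\Omega_T$ \emph{is} a homeomorphism, the face-wise harmonic replacement is a diffeomorphism by Jost--Schoen (Theorem~5.1.1 of \cite{J}), the replaced sequence stays in the class while remaining minimizing, and the limit lands in $\overline{\mathcal{D}(\rho)}$ by the argument of Theorem~\ref{dirichletW}.

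A second symptom that the approach overreaches: if your argument worked, it would show $\inf_{W^{1,2}_G(\rho)}E=\inf_{\overline{\mathcal{D}(\rho)}_G}E$ and that the two minimizers coincide. The paper deliberately keeps $u$ and $u_D$ as distinct objects throughout (Theorems~\ref{harmonic}, \ref{global lip}, \ref{interior smooth}), because the constrained problem over $\overline{\mathcal{D}(\rho)}$ is not known to have the same infimum as the unconstrained one. To repair your proof you would either have to establish injectivity and monotonicity of the free-boundary trace of the $W^{1,2}_G(\rho)$ minimizer --- a nontrivial free-boundary regularity statement --- or abandon the shortcut and construct a minimizing sequence within $\mathcal{D}_G(\rho)$ as the paper does.
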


Given the proof of Theorem~\ref{dirichletW}, the proof of Theorem 20 in \cite{FG} holds in this case too.  

For $G\in\mathcal{D}(\rho)$ and a compact set $X_n$, denote by $\mathcal{D}_G(\rho)$ the class of maps $u\in\mathcal{D}(\rho)$ with $u\equiv G$ on $G^{-1}(X_n)$ (suppressing the dependence on $X_n$ in the notation).

\begin{proposition}\label{homotopic}	
	The energy minimizing maps in $W^{1,2}_G(\rho)$ and $\overline{\mathcal{D}}_G(\rho)$ constructed in the previous theorems are homotopic to $G$. 
\end{proposition}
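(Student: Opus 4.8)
The plan is to produce an explicit \emph{geodesic homotopy} from $G$ to the minimizer $u$, using the non-positive curvature of the target on each face. Recall that $G$ is continuous by hypothesis and $u$ is continuous by the harmonic replacement construction (each $u_k$ solving a Dirichlet problem is a continuous harmonic map, and these converge to $u$; continuity of $u$ also follows from the local regularity for energy minimizers into NPC targets). By construction $u\equiv G$ on $X\backslash G^{-1}(X_n)$, and since both maps are simplicial, for every $x$ the points $G(x)$ and $u(x)$ lie in the same face $T$; if $x$ lies on an edge $e$, then $G(x),u(x)\in e$.

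For $x\in\xs$ and $t\in[0,1]$, define $H(x,t)$ to be the point dividing the geodesic from $G(x)$ to $u(x)$ in ratio $t:(1-t)$. First I check this is well defined and stays in $\xs$. If $x\in G^{-1}(X_n)$ lies in the interior of a face $T$, then $G(x),u(x)\in X_n\cap T$; by condition~\ref{convex} of Definition~\ref{conformal} this set is convex, and by Corollary~\ref{npcconf} it is NPC, so there is a unique geodesic joining the two points and it lies in $X_n\cap T\subset\xs$. If $x\in G^{-1}(X_n)$ lies on an edge $e$, then by Proposition~\ref{geodesic} the edge is a geodesic, so the geodesic from $G(x)$ to $u(x)$ runs along $e$. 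Finally, for $x\notin G^{-1}(X_n)$ we have $G(x)=u(x)$ and the geodesic is constant, so $H(x,t)=G(x)$ for all $t$; in particular $H$ is stationary on $X\backslash G^{-1}(X_n)$ and never meets the vertex set $S$.

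It then remains to prove that $H:\xs\times[0,1]\to\xs$ is jointly continuous, whence $H(\cdot,0)=G$ and $H(\cdot,1)=u$ exhibits the homotopy, relative to $X\backslash G^{-1}(X_n)$. In the interior of a face this follows from the continuity of $G$ and $u$ together with the continuous dependence of geodesics on their endpoints, which is a standard consequence of the convexity of $t\mapsto d_{\rho^2\tau}(\gamma_1(t),\gamma_2(t))$ supplied by the CAT(0) inequality. The one delicate point, which I expect to be the main obstacle, is continuity across an edge $e$: for $x_i\to x\in e$ with $x_i$ in an incident face, the geodesics joining $G(x_i)$ to $u(x_i)$ must converge to the geodesic joining $G(x)$ to $u(x)$, which degenerates onto $e$. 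This is handled by the \emph{local} CAT(0) property rather than by face-by-face convexity alone: a neighborhood of a point of $e$ is a union of NPC half-discs glued along the common convex geodesic $e$ (Corollary~\ref{npcconf}), so unique geodesics exist and vary continuously throughout such a neighborhood, including those collapsing onto $e$.

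The same argument applies without change to the energy minimizer in $\overline{\mathcal{D}}_G(\rho)$, since the geodesic homotopy uses only the continuity of the two endpoint maps and the NPC structure of the target, and makes no reference to the diffeomorphism property. Hence both minimizers are homotopic to $G$.
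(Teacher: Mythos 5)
Your argument is correct and is essentially the same one the paper relies on: the paper simply defers to Proposition 21 of \cite{FG}, whose content is exactly this geodesic homotopy $t\mapsto H(\cdot,t)$ along the unique geodesics supplied by the convexity of the faces (condition~\ref{convex} of Definition~\ref{conformal}), the geodesic property of the edges (Proposition~\ref{geodesic}), and the local CAT(0) structure of Corollary~\ref{npcconf}. Your write-up in fact supplies more detail (the continuity across edges via the glued half-disc neighborhoods) than the paper records; only a cosmetic point: the letter $H$ clashes with the finite-energy map of Theorems~\ref{finite energy euclidean} and~\ref{finite energy hyperbolic}.
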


This result follows from Proposition 21 in \cite{FG}.

\subsection{Local Regularity}\label{SLocReg}

Here we will establish enough regularity for the harmonic replacements constructed above so that we may use it in our global construction. 

For a point $p_0\in (\xs,\sigma)$, let $\text{st}(p_0)$ denote the union of all closed faces containing $p_0$. We will always assume that a ball centered at $p_0$ is contained in $\text{st}(p_0)$. There are two types of balls in $\xs$ that we will consider. In the case where $\sigma$ is a hyperbolic metric, we may identify each face $T$ with $\tilde{T}$ and consider the conformal Euclidean metric $dx^2+dy^2$ in $\tilde{T}$.
\begin{enumerate}[(i)]
    \item If $p_0$ lies in the interior of a face $T$, let $\phi:T\to\Delta$ (resp. $\phi:T\to\tilde{T}$) be an isometry if $\sigma$ is Euclidean (resp. hyperbolic). In case $\sigma$ is hyperbolic, take the conformal Euclidean metric $dx^2+dy^2$ in $\tilde{T}$. Let $r$ be less than $dist(\phi(p_0),\partial\phi(T))$ and let $B_r(p_0)$ denote $\phi^{-1}(B_r(\phi(p_0)))$, a topological disc.
    \item If $p_0$ lies on an edge $e$, let $\{\phi_j:T_j\to\Delta_j\}$ (resp. $\{\phi_j:T_j\to r_j\tilde{T}\}$) be a local model for the edge $e$ if $\sigma$ is Euclidean (resp. hyperbolic). In case $\sigma$ is hyperbolic, take the conformal Euclidean metric $dx^2+dy^2$ in each $r_j\tilde{T}$. Let $r$ be less than $dist(\phi_j(p_0),\partial\phi_j(T_j)\backslash\phi_j(e))$ for each $j$ and let $B_r(p_0)$ denote the union $\cup_j\phi_j^{-1}(B_r(\phi_j(p_0))\cap\phi_j(T_j))$. If there are $N$ faces incident to $e$, then $B_r(p_0)$ is homeomorphic to a union of $N$ half-discs glued along their common diameter.
\end{enumerate}

\begin{theorem}\label{3.9}
Let $p_0\in e$ be an edge point of $(\xs,\sigma)$, and let $u:B_r(p_0)\to(\xs,\rho^2\tau)$ be minimizing among maps that map $e\cap B_r(p_0)$ to $e$ and $T_j\cap B_r(p_0)$ to $T_j$ for each face $T_j$, and have the same trace on $\partial B_r(p_0)$. For a local model $\{\phi_{j,\alpha}:T_j\to\Delta_{j,\alpha}\}$ (resp. $\{\phi_{j,\alpha}:T_j\to r_{j,\alpha}\tilde{T}\}$) of $e$ with respect to the metrics $\alpha = \sigma,\tau$ in case $\sigma,\tau$ are Euclidean (resp. hyperbolic). Let $f_j = f_j^1+if_j^2 = \phi_{j,\tau}\circ u\circ\phi_{j,\sigma}^{-1}$ represent $u\vert_{T_j\cap B_r(p_0)}$ in coordinates. Define the Hopf differential
\[
\varphi_j = \left(\abs{\frac{\partial f_j}{\partial x}}^2 - \abs{\frac{\partial f_j}{\partial y}}^2 - 2i\left\langle\frac{\partial f_j}{\partial x},\frac{\partial f_j}{\partial y}\right\rangle\right)dz^2.
\]
Then $\varphi_j$ is holomorphic in the interior of $T_j\cap B_r(p_0)$, and moreover for all $y$ where defined,
\[
Im\sum_{j=1}^N\varphi_j(iy) = 0.
\]

\end{theorem}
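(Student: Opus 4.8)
The statement has two parts, and I would organize the proof around them. First, the holomorphicity of each $\varphi_j$ in the interior of $T_j\cap B_r(p_0)$ is the classical Hopf-differential computation. On the interior of each face the map $u\vert_{T_j}$ is an energy-minimizing (hence harmonic) map into the smooth target metric $\rho^2\tau$, so I would write down the harmonic map equation in the conformal coordinates $z=x+iy$ furnished by the local model and differentiate the expression $\varphi_j = \left(\abs{\partial_x f_j}^2 - \abs{\partial_y f_j}^2 - 2i\langle\partial_x f_j,\partial_y f_j\rangle\right)dz^2$. The standard fact is that $\partial_{\bar z}\varphi_j$ is a combination of the harmonic map operator $f_j^k_{,z\bar z} + {}^\rho\Gamma^k_{ij}(f_j) f_{j,z}^i f_{j,\bar z}^j$ (which vanishes by harmonicity) against $\langle\cdot,\partial_z f_j\rangle$, so $\varphi_j$ is holomorphic. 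Since the target metric $\rho^2\tau$ is $C^3$, the interior regularity needed to justify this (so that $u\vert_{T_j}$ is genuinely a classical harmonic map, not merely a weak minimizer) should be quoted from the elliptic regularity statements already invoked in the paper, or from the harmonic-replacement construction whereby $u$ solves the Dirichlet problem on each face.

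The second, and genuinely new, part is the edge balancing condition $\mathrm{Im}\sum_{j=1}^N\varphi_j(iy)=0$ along the segment $\phi_{j,\sigma}(e)=\{x=0\}$. The plan is to derive this as the first-variation (natural/free-boundary) condition coming from the freedom to slide the map along the edge $e$. Concretely, I would take a compactly supported vector field $V$ tangent to $e$ (in the target, pointing along $e$, which is admissible because the competitor maps are only required to send $e$ into $e$, not to fix it pointwise) and compute $\frac{d}{dt}\Big|_{t=0} E(u_t)$ for the variation $u_t$ obtained by flowing $u$ along $V$ near the edge. Minimality forces this derivative to vanish. Integrating by parts on each half-disc $\phi_{j,\sigma}^{-1}(B_r\cap\{x\ge 0\})$, the interior terms vanish by harmonicity, and what survives is a boundary integral along $\{x=0\}$ whose integrand is exactly the real part of the normal component of the Hopf differential, i.e. the quantity $\mathrm{Im}\,\varphi_j(iy)$ paired against the (arbitrary) tangential test function. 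Summing the contributions of all $N$ faces incident to $e$ — with attention to the fact that the local model is set up so each face contributes on the same side $x\ge 0$ — yields $\sum_j \mathrm{Im}\,\varphi_j(iy)=0$ for almost every $y$, and then for all $y$ by continuity.

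The main obstacle I expect is the careful bookkeeping of the boundary term and the sign/orientation conventions so that the $N$ faces' contributions genuinely add up to $\sum_j\varphi_j$ rather than cancel. In particular one must verify that the admissible tangential variation respects the simplicial constraint (the image of $e$ must stay in $e$), that the conformal factor $\rho$ and the metric $\tau$ contribute no extra boundary terms — here condition~\ref{normal deriv} of Definition~\ref{conformal}, that $\nabla\rho$ is normal to $e$, together with Proposition~\ref{geodesic} that $e$ is a geodesic, is what guarantees the only surviving boundary integrand is the Hopf-differential term — and that the coordinate $y$ on $\{x=0\}$ is consistently identified across all the faces via condition~(3) of the local model $\phi_{j,\tau}\vert_e=\phi_{k,\tau}\vert_e$. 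A secondary technical point is ensuring enough regularity up to the edge to perform the integration by parts; for this I would appeal to the boundary regularity that the minimizing property and the smoothness of $\rho^2\tau$ provide, or carry out the variation weakly and pass to the pointwise statement using the already-established holomorphicity of each $\varphi_j$, which makes $\mathrm{Im}\sum_j\varphi_j(iy)$ a real-analytic function of $y$.
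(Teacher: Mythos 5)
Your proposal is correct in outline, but it is worth noting that the paper does not actually carry out this argument: its ``proof'' of Theorem~\ref{3.9} consists of observing that holomorphicity is classical and citing \cite{DM1}, Theorem 3.9 for the balancing formula, together with the remark that conditions~\ref{normal deriv} and the geodesic property of Proposition~\ref{geodesic} let the cited proof go through with $\tau$ replaced by $\rho^2\tau$. So you are reconstructing the argument the paper outsources. Your identification of exactly which hypotheses on the conformal factor are doing the work ($\nabla\rho\perp e$ and the edges being $\rho^2\tau$-geodesics, so that no extra boundary terms appear), and of the consistency condition $\phi_{j,\tau}\vert_e=\phi_{k,\tau}\vert_e$ needed to sum the faces' contributions, matches what the paper's citation implicitly relies on.

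One point of precision. The variation you describe is a \emph{target} variation: flowing $u$ along a field $W$ tangent to $e$. The boundary term this produces on $\{x=0\}$ is $\sum_j\int\eta\left\langle\frac{\partial f_j}{\partial x},W\right\rangle\,dy$, i.e.\ (since the metric is diagonal along the edge) the weak balancing condition $\sum_j\frac{\partial f_j^2}{\partial x}(iy)=0$ of Definition~\ref{balancing} --- not literally $\mathrm{Im}\sum_j\varphi_j(iy)$. The two are reconciled by one extra observation: on the edge $f_j^1\equiv0$, so $\frac{\partial f_j}{\partial y}(iy)$ is tangent to $e$ and \emph{independent of} $j$, whence $\mathrm{Im}\,\varphi_j(iy)=-2\left\langle\frac{\partial f_j}{\partial x},\frac{\partial f_j}{\partial y}\right\rangle$ equals $\frac{\partial f_j^2}{\partial x}(iy)$ times a common factor, and the weak balancing condition implies the Hopf balancing. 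Alternatively, the route that produces $\mathrm{Im}\,\varphi_j$ in the boundary term \emph{directly} (and is the one behind the cited \cite{DM1} result) is a \emph{domain} (inner) variation by a flow tangent to $e$: the first variation of energy under inner variations is expressed through $\int\varphi_j\,\partial_{\bar z}V$, integration by parts kills the interior term by holomorphicity, and the segment of $\partial(T_j\cap B_r)$ on the $y$-axis contributes $\tfrac12\int\eta\,\mathrm{Im}\,\varphi_j\,dy$ when $V=i\eta$ is purely tangential; conformal invariance of the $2$-dimensional energy handles the hyperbolic domain metric. Either way the argument closes, so this is an imprecision rather than a gap, but as written your boundary integrand is not yet the claimed quantity.
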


The holomorphicity of the Hopf differential is a well-known fact for harmonic maps, and the proof of the Hopf balancing formula can be found in \cite{DM1}, Theorem 3.9. Our choice of conformal metrics guarantee that the proof still holds replacing $\tau$ with $\rho^2\tau$ regardless of whether the target metric is hyperbolic or Euclidean. 

\begin{theorem}\label{3.10}
	Let $p_0\in e$ be a point in an edge. If $u:B_{2r}(p_0)\to (\xs, \rho^2\tau)$ is minimizing in the same sense as in the previous Theorem, then with respect to the Euclidean metric in $B_{2r}(p_0)$, for each $j=1,\ldots,N$,
	\[
	\abs{\frac{\partial f_j}{\partial y}}^2(\bar{x}+i\bar{y})\leq \dfrac{2}{\pi r^2}E(u) \quad \text{and}\quad \abs{\frac{\partial f_j}{\partial x}}^2(\bar{x}+i\bar{y})\leq \dfrac{2N+2}{\pi r^2}E(u),
	\]
	where $(\bar{x},\bar{y})$ are the coordinates of a point in $B_r(p_0)$.
\end{theorem}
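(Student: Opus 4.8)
The plan is to derive both gradient bounds from the holomorphicity and balancing properties of the Hopf differentials established in Theorem~\ref{3.9}, combined with a mean-value inequality for (sub)harmonic functions. First I would observe that on each face $T_j$ the energy density decomposes as $\abs{\partial_x f_j}^2 + \abs{\partial_y f_j}^2$, while the real part of the Hopf differential records the difference $\abs{\partial_x f_j}^2 - \abs{\partial_y f_j}^2$. Since $u\vert_{T_j}$ is harmonic in the interior, each $\abs{\partial_y f_j}^2$ is a subharmonic function (this is the standard fact that the squared norm of a holomorphic-type quantity, or of the derivative of a harmonic map, is subharmonic). I would therefore apply the sub-mean-value inequality on a disc: the value of $\abs{\partial_y f_j}^2$ at the center is bounded by its average over a half-disc of radius $r$ (recall $B_r(p_0)$ is a union of $N$ half-discs glued along the diameter lying on $e$), and that average is in turn controlled by $\frac{1}{\pi r^2}$ times the integral of the energy density, which is at most $E(u)$. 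The factor of $2$ arises because the tangential derivative along the edge contributes to the energy on both sides, or from accounting for the half-disc normalization; I expect the constant $\frac{2}{\pi r^2}$ to fall out of this averaging once the geometry of the half-disc is handled carefully.

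For the second bound on $\abs{\partial_x f_j}^2$, the key obstacle is that a single face's normal-derivative term is not by itself controlled by the balancing formula, which only constrains the \emph{sum} $\mathrm{Im}\sum_j \varphi_j(iy) = 0$ along the edge. The strategy is to use the balancing identity to transfer information between faces. Writing $\varphi_j = (\abs{\partial_x f_j}^2 - \abs{\partial_y f_j}^2 - 2i\langle \partial_x f_j, \partial_y f_j\rangle)\,dz^2$, the balancing formula relates the imaginary parts (the cross terms $\langle \partial_x f_j, \partial_y f_j\rangle$) on the edge. I would combine this with the holomorphicity of each $\varphi_j$ to extract, via the sub-mean-value property applied to the real part, a bound on $\abs{\partial_x f_j}^2$ in terms of the already-bounded $\abs{\partial_y f_j}^2$ summed over all $N$ incident faces plus the face's own energy. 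This is where the coefficient $2N+2$ enters: one contribution of roughly $2N$ comes from summing the $N$ faces' worth of tangential energy that couple through the edge balancing, and the extra $+2$ matches the normal-derivative estimate reused from the first inequality.

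The main technical point, and the step I expect to be the hardest, is making the averaging argument rigorous up to and along the edge $e$, since $B_r(p_0)$ is not a smooth manifold there but a union of half-discs. One must verify that the subharmonicity and the sub-mean-value inequality survive across the gluing locus, which relies on the reflection/balancing structure of Theorem~\ref{3.9}: the balancing formula is precisely what allows the half-disc estimates to be patched into an estimate valid at an edge point. I would handle this by reflecting each $f_j$ across the $y$-axis and using the balancing identity to ensure the reflected energy densities assemble into a genuinely subharmonic function on a full disc, after which the classical mean-value inequality applies directly and yields the stated constants. The passage from a bound at the center $p_0$ to a bound at an arbitrary point $(\bar x, \bar y)\in B_r(p_0)$ is then immediate by recentering the disc of radius $r$ inside $B_{2r}(p_0)$, which is why the hypothesis is stated on the larger ball $B_{2r}(p_0)$.
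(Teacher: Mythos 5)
Your two-part strategy --- a subharmonicity/mean-value argument for $\abs{\partial f_j/\partial y}^2$ and a Hopf-differential/balancing argument reducing $\abs{\partial f_j/\partial x}^2$ to the tangential bounds --- is the same route the paper takes (the paper simply cites Theorem 3.10 of Daskalopoulos--Mese, whose proof has exactly this structure, with $2N+2$ arising as $2N$ from the $N$ tangential bounds plus $2$ from a mean-value bound on $\operatorname{Re}\sum_j\varphi_j$ after Schwarz reflection). However, there is a genuine gap in your first step. A nonnegative subharmonic function on a half-disc does \emph{not} satisfy a sub-mean-value inequality at a point of its flat boundary: $w(x,y)=1-x$ is harmonic on $B_r\cap\{x\ge0\}$, yet $w(0,0)=1$ exceeds its half-disc average $1-\tfrac{4r}{3\pi}$. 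So "the value at the center is bounded by its average over a half-disc" is false as stated, and the estimate at edge points genuinely requires the even reflection across $e$ to be subharmonic on the full disc; since the reflected distributional Laplacian picks up the term $2\,\partial_x\abs{\partial f_j/\partial y}^2\,\delta_{\{x=0\}}$, this needs $\partial_x\abs{\partial f_j/\partial y}^2\ge0$ on the axis, which there is no reason to expect for a single face.

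You do gesture at the correct fix (reflect, and let balancing save the day), but you have not identified why it works, and it does not work face by face: the balancing condition $\operatorname{Im}\sum_j\varphi_j(iy)=0$ constrains only the \emph{sum} $\sum_j\langle\partial_xf_j,\partial_yf_j\rangle$, and accordingly it is only the summed density $\sum_j\abs{\partial_yf_j}^2$ whose even reflection is subharmonic. Indeed, on the axis one has $\sum_j\partial_x\abs{\partial_yf_j}^2=2\,\partial_y\sum_j\langle\partial_xf_j,\partial_yf_j\rangle-2\sum_j\langle\partial_xf_j,\nabla_y\partial_yf_j\rangle$, and both terms vanish by balancing (the second because $\nabla_y\partial_yf_j$ is tangent to the target edge, which is a geodesic by Proposition~\ref{geodesic}, and is independent of $j$, while the tangential components of $\partial_xf_j$ sum to zero). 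One then bounds each $\abs{\partial_yf_j}^2$ by the sum and gets the factor $2/(\pi r^2)$ from the doubling of the domain of integration under reflection. Likewise, for the second estimate the holomorphicity of the individual $\varphi_j$ is of little direct use; it is $\Phi=\sum_j\varphi_j$ that extends holomorphically across the axis by Schwarz reflection, and $\abs{\Phi}\le\sum_j\bigl(\abs{\partial_xf_j}^2+\abs{\partial_yf_j}^2\bigr)$ together with the mean value property closes the argument. In short: right skeleton, but the load-bearing step --- that balancing kills the singular boundary contribution for the \emph{summed} quantities, and only for those --- is missing from your write-up.
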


The proof of this theorem can also be found in \cite{DM1}, Theorem 3.10 and will still hold in our conformal class of metrics. 

In order to show local Lipschitz continuity we need to distinguish the cases where $\sigma, \tau$ are Euclidean or hyperbolic metrics. 

\begin{theorem}\label{local lip}
    Let $\sigma,\tau$ be either both Euclidean metrics on $X$ or both ideal hyperbolic metrics on $\xs$, and let $\rho^2\tau$ be a conformal factor as in Definition~\ref{conformal}. For a compact subset $X_n\subset(\xs,\rho^2\tau)$ as in Theorem~\ref{dirichletW} and a map $G\in W^{1,2}(\rho)$, let $u\in W^{1,2}_G(\rho)$ and $u_D\in \overline{\mathcal{D}(\rho)}$ be the minimizing maps produced by Theorem~\ref{dirichletW} and \ref{dirichletD}.
    For any $V$ compactly supported in the interior of $G^{-1}(X_n)$, there is a constant $C$ depending only on $V$ so that in $V$,
    \[
        \abs{\nabla u}^2 \leq CE(u) \quad \text{and} \quad \abs{\nabla u_D}^2 \leq CE(u_D).
    \]
    As a result, $u$ is locally Lipschitz continuous in $G^{-1}(X_n)$ with Lipschitz constant depending only on the energy of $u$ and the distance to $\partial G^{-1}(X_n)$.
\end{theorem}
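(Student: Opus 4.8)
The plan is to establish the pointwise energy-density bound $\abs{\nabla u}^2(p)\leq CE(u)$ separately at interior points of faces and at edge points, and then deduce local Lipschitz continuity by integrating the gradient bound along paths. The two regimes need different tools: in the interior of a face the minimizing map is genuinely harmonic into a non-positively curved smooth target, so classical interior estimates apply, whereas on an edge one must exploit the Hopf-balancing machinery of Theorems~\ref{3.9} and \ref{3.10}.

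First I would treat the interior case. On the interior of each face $T$, the map $u$ (resp.\ $u_D$) is a harmonic map from a piece of $(\xs,\sigma)$ into the NPC smooth Riemannian target $(T,\rho^2\tau)$, by the construction in Theorem~\ref{dirichletW} (resp.\ the diffeomorphic harmonic replacement of Theorem~\ref{dirichletD}). For harmonic maps into NPC targets the energy density $\abs{\nabla u}^2$ is subharmonic (Bochner formula, with the curvature term having a favorable sign since the target curvature is $\leq 0$ by Proposition~\ref{conformal curvature}), so the sub-mean-value property gives
\[
\abs{\nabla u}^2(p)\leq \frac{C}{r^2}\int_{B_r(p)}\abs{\nabla u}^2\,d\mu_\sigma\leq \frac{C}{r^2}E(u),
\]
where $r$ is comparable to the distance from $p$ to the boundary of the face. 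Since $V$ is compactly contained in the interior of $G^{-1}(X_n)$, this $r$ is bounded below by a constant depending only on $V$, yielding the stated bound at interior points.

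The main work, and the main obstacle, is the edge case, where the Bochner argument breaks down because the domain is not a manifold across the edge. Here I would invoke Theorem~\ref{3.10} directly: for an edge point $p_0$ and the representation $f_j=\phi_{j,\tau}\circ u\circ\phi_{j,\sigma}^{-1}$ in each incident face, that theorem already supplies
\[
\abs{\frac{\partial f_j}{\partial y}}^2\leq \frac{2}{\pi r^2}E(u)\quad\text{and}\quad \abs{\frac{\partial f_j}{\partial x}}^2\leq \frac{2N+2}{\pi r^2}E(u)
\]
on $B_r(p_0)$. Summing these components bounds the full energy density $\abs{\nabla u}^2$ measured with respect to $\sigma$ and $\rho^2\tau$ (the conversion between the flat Euclidean coordinate derivatives appearing in Theorem~\ref{3.10} and the intrinsic energy density costs only a factor controlled by the $C^0$ bounds on $\rho$ and on the conformal factor relating $\sigma$ to the flat coordinate metric, which are uniform on the compact set $V$). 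The delicate point is that Theorem~\ref{3.10} is stated for a map minimizing on a single ball $B_{2r}(p_0)$ with fixed trace on $\partial B_{2r}(p_0)$; I must check that the global minimizer $u$ from Theorem~\ref{dirichletW} is in particular minimizing with respect to such compactly supported competitors on each $B_{2r}(p_0)\subset G^{-1}(X_n)$, which follows because any variation supported in $B_{2r}(p_0)$ leaves $u$ unchanged outside and hence is an admissible competitor in $W^{1,2}_G(\rho)$. The same reasoning applies verbatim to $u_D$, using that $u_D$ is a harmonic diffeomorphism on each face by Theorem~\ref{dirichletD} and that the edge estimates of Theorems~\ref{3.9}--\ref{3.10} only use the minimizing property and the simplicial constraint, both of which $u_D$ satisfies.

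Finally I would extract Lipschitz continuity. With $\abs{\nabla u}^2\leq CE(u)$ holding pointwise throughout $V$ (uniformly over interior and edge points, the vertices being excluded since $V\Subset$ interior of $G^{-1}(X_n)$), the map $u$ has bounded gradient on $V$. Since $\xs$ is locally $1$-chainable and the faces are convex with geodesic edges (Proposition~\ref{geodesic}), any two nearby points of $V$ are joined by a piecewise-geodesic path whose $\sigma$-length is comparable to their distance and which stays in a region where the bound holds; integrating $\abs{\nabla u}$ along this path gives
\[
d_{\rho^2\tau}(u(p),u(q))\leq \sqrt{CE(u)}\;d_\sigma(p,q),
\]
so $u$ (resp.\ $u_D$) is locally Lipschitz with constant controlled by $\sqrt{E(u)}$ and the distance to $\partial G^{-1}(X_n)$, as claimed. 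I expect the only genuinely subtle step to be making the edge estimate uniform as $p_0$ ranges over the edge and converting the coordinate-wise bounds of Theorem~\ref{3.10} into an intrinsic gradient bound with constants depending only on $V$; everything else is a packaging of the cited interior and edge regularity results.
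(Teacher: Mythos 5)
Your proposal follows essentially the same route as the paper: split $V$ into a neighborhood of the edges, where Theorem~\ref{3.10} applies because the global minimizer is in particular minimizing on each small ball with respect to its own trace, and the face interiors, where an interior gradient estimate for harmonic maps into NPC targets gives $\abs{\nabla u}^2\leq Cr^{-2}E(u)$; the paper cites Theorem 2.4.6 of \cite{KS1} for the interior bound rather than your Bochner argument, which incidentally only yields $\Delta\abs{\nabla u}^2\geq -C\abs{\nabla u}^2$ (not genuine subharmonicity) when the domain is hyperbolic, though the mean-value estimate survives. The combination of the two regimes and the passage to the Lipschitz conclusion are as in the paper.
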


\begin{proof}
    This proof is based on similar results in the literature. Because we have additional restrictions on our class of maps, namely that they respect the simplicial structure of $X$, we use an argument of Korevaar-Schoen in the interior of each face, and an argument of Daskalopoulos-Mese near the edges.
    
    Let $X_n$ be as in the statement, and let $r_0>0$ be sufficiently small so that for every point $q\in G^{-1}(X_n)$ on an edge, the ball $B_{r_0}(q)$ is defined as in the beginning of this section. In particular, for each such $q$ the ball $B_{r_0}(q)$ should be contained in the star of the edge in which $q$ lies. For $r<r_0$, let
    
    \[
        V_r = \{p\in G^{-1}(X_n) \vert dist(p,\partial G^{-1}(X_n))\geq r\}.
    \]
    Here the distance is measured with respect to the metric $\sigma$ in the domain.
    
    There are two types of points in $V_r$ to consider. First suppose that $p\in B_{r/2}(q)$ for some point $q\in G^{-1}(X_n)$ on an edge $e$ (see Figure~\ref{lip}). Since $u$ and $u_D$ are minimizing in $B_r(q)$ with respect to their boundary values, we may apply Theorem~\ref{3.10}. In the faces $T_j$ incident to $e$, the representations $f_j$ of the map in coordinates satisfy
	\[
	\abs{\frac{\partial f_j}{\partial y}}_{euc}^2\leq \dfrac{8}{\pi r^2}E(u) \quad \text{and}\quad \abs{\frac{\partial f_j}{\partial x}}_{euc}^2\leq \dfrac{8N+8}{\pi r^2}E(u),
	\]
	These estimates are taken with respect to the Euclidean metric in $B_r(q)$. In the case when $\sigma$ is Euclidean, this suffices to prove the result near the edges of $X$. In case $\sigma$ is hyperbolic, then we have
	\[
	\abs{\frac{\partial f_j}{\partial y}}_{hyp}^2 \leq \dfrac{8y^2}{\pi r^2}E(u) \quad \text{and}\quad \abs{\frac{\partial f_j}{\partial x}}_{hyp}^2\leq \dfrac{(8N+8)y^2}{\pi r^2}E(u).
	\]
	Since $V_r$ is compact, $y^2$ is bounded, so in either case we have bounded the partial derivatives of $u$ in a neighborhood of the edges.
	
    \begin{figure}[ht]
        \centering
        \includegraphics[width=\textwidth]{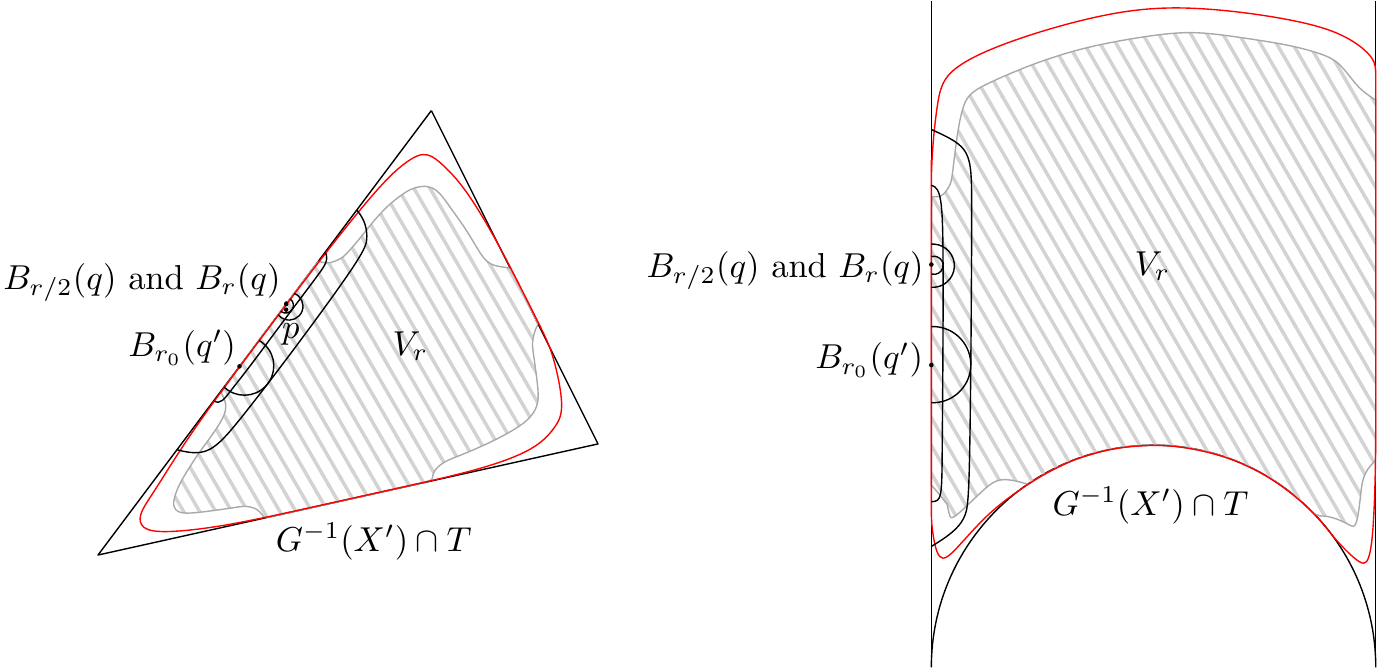}\label{lip}
        \caption{Local Lipschitz continuity on Euclidean and hyperbolic triangles}
    \end{figure}
	
	Now suppose that $p\in V_r$ is in the interior of some face $T$, but not in any $B_{r/2}(q)$ for any edge point $q$. Let $f$ represent either $u$ or $u_D$ in coordinates on the face $T$. Since $f$ is harmonic on $G^{-1}(X_n)$, Theorem 2.4.6 of \cite{KS1} implies that there is a constant C so that
	\[
	    \abs{\nabla f}^2(z) \leq \frac{C}{\min(1,dist(z,\partial G^{-1}(X_n))^2)}E(f).
    \]
    For $p\in V_r$, if $p$ is not contained in any $B_{r/2}(q)$ for any edge point $q$, then we have $dist(p,\partial G^{-1}(X_n))\geq r/2$. Hence at such a point,
    \[
        \abs{\nabla f}^2 \leq \frac{C}{r^2}E(f).
    \]
    This estimate on the energy density bounds the partial derivatives of $u$ in the interior of each face of $V_r$. Combined with the estimate near the edges, we have bounded the partial derivatives of both $u$ and $u_D$ in $V_r$, achieving the desired Lipschitz result.
\end{proof}

\subsection{Global Existence}

Now we may prove the global existence theorem.

\begin{theorem}\label{harmonic}
	There exist energy minimizing mappings $u\in W^{1,2}(\rho)$ and $u_D\in\overline{\mathcal{D}}(\rho)$. That is,
	\[
	E(u) = \inf_{v\in W^{1,2}(\rho)}E(v),
	\]
	and
	\[
	E(u_D) = \inf_{v\in\mathcal{D}(\rho)}E(v).
	\]
	Moreover both $u$ and $u_D$ are homotopic to the function $H$ constructed in Theorem~\ref{finite energy euclidean} for the Euclidean case or Theorem~\ref{finite energy hyperbolic} for the hyperbolic case.
\end{theorem}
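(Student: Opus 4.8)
The plan is to run the exhaustion-plus-harmonic-replacement scheme of \cite{FG}, now powered by the local regularity of Theorem~\ref{local lip} for the conformal target $\rho^2\tau$. Fix the finite-energy map $G=H$ supplied by Theorem~\ref{finite energy euclidean} (Euclidean case) or Theorem~\ref{finite energy hyperbolic} (hyperbolic case). For each $n$ apply Theorem~\ref{dirichletW} with boundary data $H$ on the exhaustion set $X_n$ to produce $u_n\in W^{1,2}_H(\rho)$ minimizing energy among maps agreeing with $H$ off $H^{-1}(X_n)$, and Theorem~\ref{dirichletD} to produce the corresponding $u_{D,n}\in\overline{\mathcal{D}}(\rho)$. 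Because $H^{-1}(X_n)\subset H^{-1}(X_{n+1})$, the admissible classes are nested and increasing, so $E(u_n)$ and $E(u_{D,n})$ are non-increasing, bounded above by $E(H)$, and bounded below by the respective global infima; hence they converge to some $L\ge\inf_{W^{1,2}(\rho)}E$ and $L_D\ge\inf_{\mathcal{D}(\rho)}E$.

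Next I would extract a limit by compactness. On any $V$ compactly contained in $\xs$, choose $n$ large enough that $V$ lies in the interior of $H^{-1}(X_n)$; Theorem~\ref{local lip} then gives $\abs{\nabla u_n}^2\le C_V E(u_n)\le C_V E(H)$ on $V$, with $C_V$ independent of $n$. Thus $\{u_n\}$ is locally equi-Lipschitz, and by Arzel\`a--Ascoli a subsequence converges locally uniformly, and weakly in $W^{1,2}$ on each compact set, to a map $u$; the uniform limit is again simplicial (the conditions $u\vert_e\subset e$ and $u\vert_T\subset T$ pass to the limit) and locally Lipschitz on $\xs$. Lower semicontinuity of the energy on each $X_N$ gives $\int_{X_N}\abs{\nabla u}^2\le\liminf_n E(u_n)=L$, and letting $N\to\infty$ yields $E(u)\le L<\infty$, so $u\in W^{1,2}(\rho)$. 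The same argument applied to $u_{D,n}$ produces $u_D\in\overline{\mathcal{D}}(\rho)$, and the interior diffeomorphism property of Theorem~\ref{dirichletD} is inherited on compacta.

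It remains to identify $L$ with $\inf_{W^{1,2}(\rho)}E$ (and $L_D$ with $\inf_{\mathcal{D}(\rho)}E$); combined with $E(u)\le L$ and $u\in W^{1,2}(\rho)$ this forces $E(u)=\inf_{W^{1,2}(\rho)}E$. Since $L=\inf_n\inf\{E(v):v\in W^{1,2}(\rho),\ v\equiv H\text{ off }H^{-1}(X_n)\}$, the claim is that maps coinciding with $H$ near the vertices are energy-dense in $W^{1,2}(\rho)$. Given any competitor $w$ with $E(w)<\infty$, I would interpolate geodesically between $w$ and $H$ across the annulus $H^{-1}(X_n)\setminus H^{-1}(X_{n-1})$, weighted by a cutoff $\psi$ equal to $1$ on $H^{-1}(X_{n-1})$ and $0$ off $H^{-1}(X_n)$. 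Because the target is NPC (Corollary~\ref{npcconf}) and both $w$ and $H$ respect the convex faces and geodesic edges, this interpolation stays simplicial and admissible for $X_n$, and the Korevaar--Schoen estimates \cite{KS1} bound its energy by $E(w)$ plus an error supported in the annulus. As $E(w)$ and $E(H)$ are finite, their energy in the tail $\xs\setminus H^{-1}(X_{n-1})$ tends to $0$, so the error tends to $0$ and $L\le E(w)$; taking the infimum over $w$ gives equality.

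For the homotopy claim, Proposition~\ref{homotopic} gives that each $u_n$ and $u_{D,n}$ is homotopic to $H$; since $u$ is the locally uniform limit of the $u_n$ into the NPC complex, for large $n$ the maps $u$ and $u_n$ are uniformly close and hence joined by geodesic interpolation in the target, so $u\simeq u_n\simeq H$, and likewise $u_D\simeq H$, exactly as in \cite{FG}. I expect the main obstacle to be the energy-density (truncation) step of the third paragraph: controlling the cross terms $\int\abs{\nabla\psi}^2\,d_{\rho^2\tau}(w,H)^2$ in the interpolation, which in the hyperbolic case requires choosing the annuli $H^{-1}(X_n)\setminus H^{-1}(X_{n-1})$ with uniformly bounded geometry near the cusps so that the cutoff gradient stays controlled while the tail energies still vanish. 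The interior estimates of Theorem~\ref{local lip} degenerate at the vertices, so this near-vertex bookkeeping, rather than the compactness argument, is where care is needed.
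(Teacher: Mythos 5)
Your proposal follows essentially the same route as the paper, which proves Theorem~\ref{harmonic} by citing Theorems~\ref{dirichletW} and \ref{local lip} and running the exhaustion-plus-harmonic-replacement argument of Theorem 26 of \cite{FG}: replacement on each $X_n$, uniform local Lipschitz bounds for compactness, lower semicontinuity, a truncation/interpolation step to identify the limiting energy with the global infimum, and Proposition~\ref{homotopic} for the homotopy claim. Your reconstruction, including the flagged near-vertex bookkeeping in the hyperbolic case, matches the intended argument.
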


	Using Theorems~\ref{dirichletW} and \ref{local lip}, the proof of this result follows exactly as in Theorem 26 of \cite{FG}.

\section{Properties of minimizing maps}\label{SProp}

In this section we will collect various properties of the minimizing map we constructed in Section~\ref{SExist}. First, we see that our map is locally Lipschitz. The same proof as for Theorem~\ref{local lip} proves the following:

\begin{theorem}\label{global lip}
	Let $u\in W^{1,2}(\rho)$ and $u_D\in\overline{\mathcal{D}}(\rho)$ be the energy minimizing maps constructed in Theorem~\ref{harmonic}.
	Then in any compact subset $V$ of $\xs$ there is a constant $C$ depending only on $V$ so that
	\[
	\abs{\nabla u}^2(p)\leq CE(u)\quad\text{ and }\quad\abs{\nabla u_D}^2(p)\leq CE(u_D)
	\]
	in $V$. As a result $u$ and $u_D$ are locally Lipschitz continuous, with Lipschitz constant at a point $p\in \xs$ depending only on the energy of $u$ (resp. $u_D$) and the distance of $p$ from the center of any face in which $p$ lies.
\end{theorem}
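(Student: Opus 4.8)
The plan is to reduce this global statement to the local estimates already established in Theorem~\ref{local lip}. The essential observation is that the globally minimizing maps $u$ and $u_D$ produced by Theorem~\ref{harmonic} are automatically \emph{locally} minimizing: for any point $p\in\xs$ and any admissible ball $B_r(p)$ (of the two types described at the start of Section~\ref{SLocReg}) whose closure avoids the vertex set $S$, the restriction $u\vert_{B_r(p)}$ minimizes energy among all simplicial maps agreeing with $u$ on $\partial B_r(p)$. Indeed, were there a competitor of strictly smaller energy on $B_r(p)$, regluing it to $u$ outside $B_r(p)$ would produce a map in $W^{1,2}(\rho)$ of strictly smaller total energy, contradicting global minimality; the same reasoning, using the diffeomorphism-preserving replacements of \cite{JS}, applies to $u_D$ within $\overline{\mathcal D}(\rho)$. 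This local minimality is exactly the hypothesis under which Theorems~\ref{3.9} and \ref{3.10} apply, and it also guarantees that $u$ is harmonic in the interior of each face.

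With local minimality in hand, I would run the same dichotomy as in the proof of Theorem~\ref{local lip}. Fix a compact set $V\subset\xs$; since $V$ is disjoint from $S$, there is a radius $r_0>0$, depending only on $V$, so that for every $p\in V$ an admissible ball $B_{r_0}(p)$ exists and avoids the vertices. For an edge point $p$, apply Theorem~\ref{3.10} on $B_{2r}(p)$ to bound $\abs{\partial f_j/\partial x}^2$ and $\abs{\partial f_j/\partial y}^2$ by a multiple of $r^{-2}E(u)$ with respect to the Euclidean metric; in the hyperbolic case one converts to the hyperbolic norm by multiplying by $y^2$, which is bounded on the compact set $V$. For an interior point $p$ whose distance to the edges exceeds $r/2$, apply the interior estimate of Korevaar--Schoen (Theorem 2.4.6 of \cite{KS1}) to the harmonic representative $f$, yielding $\abs{\nabla f}^2(z)\le C\,\mathrm{dist}(z,S)^{-2}E(f)$. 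Combining the two cases and using that $\mathrm{dist}(\cdot,S)$ is bounded below on $V$ gives the uniform bound $\abs{\nabla u}^2\le C\,E(u)$, and likewise for $u_D$, on $V$, with $C=C(V)$.

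The pointwise Lipschitz statement then follows by integrating the energy-density bound along $\sigma$-geodesics: at a point $p$ the admissible radius $r$ can be taken comparable to the distance from $p$ to the vertices (equivalently, to how far $p$ sits from the boundary of its face toward its center), so the local Lipschitz constant is controlled by a multiple of $r^{-1}E(u)^{1/2}$ with the stated dependence. The main subtlety I expect lies not in any single estimate but in the bookkeeping of the passage from the constrained minimizers of Theorem~\ref{local lip}, where the governing distance was to $\partial G^{-1}(X_n)$, to the global minimizers of Theorem~\ref{harmonic}: one must confirm that the limiting map inherits both the local minimizing property on every ball avoiding $S$ and interior harmonicity, so that Theorem~\ref{3.10} and the Korevaar--Schoen estimate remain applicable with the distance to $\partial G^{-1}(X_n)$ everywhere replaced by the distance to the vertex set $S$.
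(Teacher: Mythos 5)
Your proposal is correct and follows the same route as the paper, which simply states that the proof of Theorem~\ref{local lip} carries over verbatim: one observes that the global minimizers are locally minimizing on admissible balls, then runs the same edge-point/interior-point dichotomy using Theorem~\ref{3.10} near the edges and the Korevaar--Schoen interior estimate elsewhere, with the distance to $\partial G^{-1}(X_n)$ replaced by the distance to the vertex set. Your write-up usefully makes explicit the cut-and-paste argument for local minimality that the paper leaves implicit.
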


In fact, these maps are much more regular in the interior of each face. The regularity of the minimizing maps depends on the regularity of the metrics. Since both Euclidean and hyperbolic metrics are analytic, the only limiting factor on the regularity of the metrics comes from conformal factors.

\begin{definition}
    Let $\tau$ be either a Euclidean metric on $X$ or an ideal hyperbolic metric on $\xs$, and let $\rho^2\tau$ be a conformal metric as in Definition~\ref{conformal}. We say that the metric $\rho^2\tau$ is $C^k$ (resp. $C^\infty$, analytic) if the conformal factor is $C^k$ (resp. $C^\infty$, analytic) in the closure of each face $T$.
\end{definition}

\begin{theorem}\label{interior smooth}
    Let $u\in W^{1,2}(\rho)$ and $u_D\in\overline{\mathcal{D}}(\rho)$ be the energy minimizing maps constructed in Theorem~\ref{harmonic}.  Suppose the metric $\rho^2\tau$ is $C^k$, with $3\leq k\leq\infty$. Then for any face $T$ of $X$, the restrictions $u\vert_T$ and $u_D\vert_T$ to the interior of $T$ are $C^{k-1}$ maps. If $\rho^2\tau$ is analytic, then $u\vert_T$ and $u_D\vert_T$ are analytic in the interior of each $T$. Moreover the map $u_D$ is a diffeomorphism on the interior of each face of $X$.
\end{theorem}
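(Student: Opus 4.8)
The plan is to treat each restriction $u\vert_{\mathrm{int}\,T}$ and $u_D\vert_{\mathrm{int}\,T}$ as a solution of the harmonic map system on a smooth Riemannian surface and to bootstrap regularity by elliptic theory, the only ceiling being the regularity of the target Christoffel symbols. First I would fix a face $T$ and work in the flat isothermal coordinates $z=x+iy$ supplied by the local models (both Euclidean and ideal hyperbolic domain metrics are conformally flat). Since the Dirichlet energy of a map from a surface depends on the domain metric only through its conformal class, the domain conformal factor drops out of the Euler--Lagrange equation, and the energy-minimizing property of $u$ (with the constraint $T\to T$ inactive on the open face) shows that $u\vert_{\mathrm{int}\,T}$ is weakly harmonic, i.e.
\[
\Delta_0 u^i = -\,{}^{\rho}\Gamma^i_{jk}(u)\left(\partial_x u^j\,\partial_x u^k + \partial_y u^j\,\partial_y u^k\right),
\]
where $\Delta_0$ is the flat Laplacian and ${}^{\rho}\Gamma$ are the Christoffel symbols of $\rho^2\tau$. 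The same holds for $u_D$, whose interior harmonicity comes from Theorem~\ref{dirichletD}. By Theorem~\ref{global lip} both maps are locally Lipschitz, so $\nabla u,\nabla u_D\in L^\infty_{\mathrm{loc}}$ and their images stay in a fixed compact part of $\mathrm{int}\,T$; this is the starting regularity for the iteration.

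Next I would run the standard bootstrap. Because $\rho^2\tau$ is $C^k$, its Christoffel symbols lie in $C^{k-1}$, and since $k\geq 3$ they are in particular continuous and bounded along the (bounded) image. Hence the right-hand side above is in $L^\infty_{\mathrm{loc}}\subset L^p_{\mathrm{loc}}$ for every $p<\infty$; Calder\'on--Zygmund $L^p$ estimates give $u\in W^{2,p}_{\mathrm{loc}}$ for all $p$, and Sobolev embedding in dimension two gives $u\in C^{1,\alpha}_{\mathrm{loc}}$ for all $\alpha<1$. From here I would iterate Schauder: if $u\in C^{m,\alpha}_{\mathrm{loc}}$ with $m\leq k-1$, then the gradient-quadratic term and the composition ${}^{\rho}\Gamma(u)$ are in $C^{m-1,\alpha}_{\mathrm{loc}}$, so the right-hand side is in $C^{m-1,\alpha}_{\mathrm{loc}}$ and Schauder yields $u\in C^{m+1,\alpha}_{\mathrm{loc}}$. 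The iteration proceeds until it exhausts the $C^{k-1}$ regularity of ${}^{\rho}\Gamma$, giving $u\vert_T,u_D\vert_T\in C^{k-1}$ on the interior. When $\rho^2\tau$ is analytic the coefficients of the elliptic system are analytic functions of $u$ and its first derivatives, and Morrey's analyticity theorem for solutions of analytic elliptic systems (\cite{M1}, \cite{M2}) upgrades the conclusion to analyticity in $\mathrm{int}\,T$.

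For the diffeomorphism claim I would argue as follows. On each $X_n$ in the exhaustion, Theorem~\ref{dirichletD} produces minimizers that are harmonic \emph{diffeomorphisms} on $\mathrm{int}(T\cap G^{-1}(X_n))$, and $u_D$ is the limit of these maps from the construction of Theorem~\ref{harmonic}. By Proposition~\ref{conformal curvature} and Corollary~\ref{npcconf} the target is nonpositively curved in each face, so the theory of harmonic maps between surfaces with NPC target (\cite{J}, \cite{JS}) applies: the Jacobian of such a harmonic map is either identically zero or nowhere zero. Since $u_D$ is a nonconstant limit of orientation-preserving diffeomorphisms, its Jacobian is everywhere nonnegative and not identically zero, hence strictly positive, so $u_D$ is a local diffeomorphism on $\mathrm{int}\,T$; the simplicial boundary behavior and degree then force it to be a global diffeomorphism of the open face.

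The regularity bootstrap is essentially routine once interior harmonicity and the Lipschitz bound of Theorem~\ref{global lip} are available, so I expect the main obstacle to be the diffeomorphism statement, specifically ruling out degeneration of the Jacobian of $u_D$. This step genuinely uses the nonpositive curvature of the target --- through the holomorphic Hopf differential of Theorem~\ref{3.9} and the attendant Bochner-type control of $\log\abs{\partial u_D}$ --- together with the fact that $u_D$ is an actual limit of diffeomorphisms rather than merely homotopic to one, so that the sign of the Jacobian is preserved in the limit.
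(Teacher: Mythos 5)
Your proposal is correct and follows essentially the same route as the paper: establish interior harmonicity from the minimizing property, bootstrap via elliptic regularity up to the $C^{k-1}$ (resp.\ analytic) ceiling set by the Christoffel symbols, and deduce the diffeomorphism property of $u_D$ from the nonpositive curvature of the target and the fact that it is a limit of harmonic diffeomorphisms. The only difference is presentational: where you carry out the Calder\'on--Zygmund/Schauder iteration and the Jacobian sign argument by hand, the paper cites Morrey's theorems (\cite{M1} Theorem 3.2, \cite{M2} Theorem 6.8.1) and defers the diffeomorphism step verbatim to Theorem 31 of \cite{FG}.
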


\begin{proof}
    Fix a face $T$ and an isometry $\phi:T\to\Delta$ (if $\sigma,\tau$ are Euclidean) or $\phi:T\to\tilde{T}$ (if $\sigma,\tau$ are hyperbolic), and let $f = \phi\circ u\circ\phi^{-1}$, $f_D = \phi\circ u_D\circ\phi^{-1}$ represent $u, u_D$ in coordinates. For any disc $\mathbb{D}$ in the interior of $\phi(T)$, the maps $f$ and $f_D$ are both Lipschitz (c.f. Theorem~\ref{global lip}), so they have well defined boundary values on $\partial\mathbb{D}$. Since both $f$ and $f_D$ minimize energy in $\mathbb{D}$ with respect to their own boundary values, they must be harmonic in $\mathbb{D}$. By Theorem 3.2 of \cite{M1}, our harmonic maps are $C^2$ in the interior of each face.
    
    To improve the regularity of our maps, we can use Theorem 6.8.1 of \cite{M2}, which states that a $C^2$ map satisfying an elliptic partial differential equation on an open set is as regular as the coefficients of the equation. It is well known that the harmonic map equation is elliptic and the coefficients are given by the Christoffel symbols of the metric, that is, they depend on the first partial derivatives of the metric. Thus if $\rho^2\tau$ is $C^k$ then $f$ and $f_D$ are $C^{k-1}$ maps, and if $\rho^2\tau$ is analytic then $f$ and $f_D$ are analytic maps, all in the interior of the triangle.

    Finally, to show that $u_D$ is a diffeomorphism in the interior of each face we can follow the second half of the proof of Theorem 31 in \cite{FG} verbatim. 

\end{proof}

We now collect some topological properties satisfied by both $u$ and $u_D$, namely that these maps are proper and have degree 1.


\begin{theorem}\label{proper and degree}
The energy minimizing map $u\in W^{1,2}(\rho)$ and $u_D\in\mathcal{D}(\rho)$ constructed in Theorem~\ref{harmonic} are proper and have degree 1.
\end{theorem}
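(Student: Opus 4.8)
The plan is to prove \emph{properness} first and then deduce the degree, since the mapping degree of a proper map between oriented surfaces is invariant under proper homotopy. By Proposition~\ref{homotopic} the minimizers $u$ and $u_D$ are homotopic to the comparison map $H$ of Theorem~\ref{finite energy euclidean} (resp.\ Theorem~\ref{finite energy hyperbolic}), and $H$ is manifestly proper of degree $1$: it is simplicial, carries each face $T$ onto itself by an orientation-preserving diffeomorphism, and carries each end (cone point or cusp) to the corresponding end. Thus once I show $u$ and $u_D$ are proper I will obtain the degree by comparison with $H$, and I will also confirm it directly by counting preimages of a regular value, which avoids having to track the properness of the homotopy itself.

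The main step is properness, and the only difficulty lies near the vertices of $X$; everywhere else the maps are locally Lipschitz by Theorem~\ref{global lip} and restrict to harmonic (resp.\ diffeomorphic) maps of the interior of each face by Theorem~\ref{interior smooth}. In the Euclidean case $X$ is compact, the maps extend continuously to $X$ fixing every vertex, and properness of the restriction to $\xs$ is equivalent to the inclusion $u^{-1}(S)\subseteq S$. To rule out that an interior point of a face maps to a vertex I will use that the $\rho^2\tau$-distance to a point (or to a supporting geodesic) of the NPC target face (Corollary~\ref{npcconf}) is convex, so its composition with the harmonic map $u|_T$ is subharmonic and cannot attain an interior extremum by the strong maximum principle; the same argument on each edge, which is a $\rho^2\tau$-geodesic by Proposition~\ref{geodesic}, prevents an interior edge point from mapping onto a vertex. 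In the ideal hyperbolic case the ends are genuine cusps, and I argue in the local model of Definition~\ref{local vertex hyp}: letting $\beta_v$ be the Busemann (horocyclic height) function of the target cusp over $v$, which is convex because $\rho^2\tau$ is NPC, the function $\beta_v\circ u$ is subharmonic on each incident face. Comparing with the explicit horizontal-stretch form of $H$ near the cusp, which pins down the homotopy class of the loop encircling the puncture, forces $\beta_v\circ u\to+\infty$ as $x\to v$; equivalently, a sufficiently small horocyclic neighborhood of $v$ is mapped into a horocyclic neighborhood of $v$, which is exactly properness near each cusp.

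With properness in hand the degree is computed by choosing a regular value $y$ in the interior of a single face $T$. Since the maps are simplicial, $u(T')\subseteq T'$ for every face $T'$, and the interiors being disjoint forces every preimage of $y$ to lie in $\mathrm{int}\,T$. For $u_D$, Theorem~\ref{interior smooth} makes $u_D|_{\mathrm{int}\,T}$ an orientation-preserving diffeomorphism onto its image; properness then makes it a proper local diffeomorphism of the disc $\mathrm{int}\,T$ onto itself, hence a genuine diffeomorphism, so $y$ has exactly one preimage counted with sign $+1$ and $\deg u_D=1$. For $u$, the algebraic count over $y$ equals the local degree of the harmonic self-map $u|_T$, which equals $1$ by proper-homotopy invariance against $H|_T$ on the disc. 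I expect the genuine obstacle to be the cusp analysis in the hyperbolic case: turning the homotopy constraint into quantitative control of $u$ near the puncture, so that the subharmonic comparison with $\beta_v$ closes and the barrier estimate $\beta_v\circ u\to+\infty$ is rigorous, is the delicate point, and it is here that the argument must follow the corresponding estimates of \cite{FG} most closely.
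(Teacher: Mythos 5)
Your overall skeleton --- establish properness, then get the degree by comparison with the map $H$ via Proposition~\ref{homotopic} --- is the same as the paper's, which disposes of the Euclidean case by compactness plus homotopy invariance and refers the entire hyperbolic case to Theorems 34 and 35 of \cite{FG}. The extra work you do in the Euclidean case is unnecessary: the maps extend continuously to the compact space $X$ with every vertex fixed, so the preimage of a compact set $K\subset\xs$ is a compact subset of $X$ that misses $S$ (because $S$ maps into $S$ and $K\cap S=\emptyset$), and properness follows without any analysis of whether a non-vertex point can hit a vertex. The degree computation itself (a regular-value count for $u_D$, homotopy to the affine or horizontal-stretch map $H$ for $u$) is consistent with the paper.

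The genuine gap is that the one new analytic tool you introduce points the wrong way. If $\phi$ is convex on the NPC target --- the distance to a point, the distance to a supporting geodesic, or a Busemann function --- then $\phi\circ u$ is subharmonic and obeys only a \emph{maximum} principle; it can perfectly well attain an interior \emph{minimum} (e.g. $\abs{z}^2$). So convexity of $d_{\rho^2\tau}(\cdot,v)$ cannot exclude an interior zero, and, more importantly, it cannot force $\beta_v\circ u\to+\infty$ at a cusp: with the convex normalization the Busemann function tends to $-\infty$ into the cusp, so subharmonicity gives an upper barrier only, while with the concave ``height'' normalization $\beta_v\circ u$ is superharmonic and its minimum principle merely transfers lower bounds from boundary data that you do not yet control --- which is precisely the properness you are trying to prove. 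Since the cusp estimate is the entire content of the hyperbolic case, and you yourself defer its execution to \cite{FG}, that half of the theorem is not actually proved in your proposal; what is needed there is quantitative control of the image of small loops around the puncture coming from the finiteness of energy (as in Theorems 34 and 35 of \cite{FG}), not a one-sided convexity comparison.
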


\begin{proof}
The proof of properness is trivial in the case when $\sigma, \tau$ are Euclidean metrics since continuous maps between compact spaces are proper. Moreover, $u$ and $u_D$ are homotopic (relative to the boundary of each triangle) to the affine map $H$ from Theorem~\ref{finite energy euclidean} and since affine maps have degree 1, so will $u$ and $u_D$.

In the case when $\sigma$ and $\tau$ are ideal hyperbolic metrics, the proof of this results follows word for word as in Theorems 34 and 35 of \cite{FG}.

\end{proof}

\subsection{Regularity of the $W^{1,2}(\rho)$ minimizer}

Here we collect properties of the minimizing map $u\in W^{1,2}(\rho)$. We begin with a balancing condition that characterizes the $W^{1,2}(\rho)$ minimizer.

\begin{definition}\label{balancing}
    Let $\sigma,\tau$ be either Euclidean metrics on $X$ or ideal hyperbolic metrics on $\xs$, as in Section~\ref{Smetrics}. Fix an edge $e$ of $X$, with incident faces enumerated $T_1,\ldots,T_n$, and let $\{\phi_j:T_j\to\Delta_{j,*}\}$ (in the case of Euclidean $\sigma,\tau$) or $\{\phi_j:T_j\to r_{j,*}\tilde{T}\}$ (in the case of hyperbolic $\sigma,\tau$) be local models for the edge $e$ with respect to the metrics $*=\sigma,\tau$ (recall that the edge $e$ is identified with an interval on the $y$-axis). Let $f_j = \phi_{j,\tau}\circ u\circ\phi_{j,\sigma}^{-1}$ represent $u\vert_{T_j}$ in coordinates, and write $f_j = f_j^1 + if_j^2$ in complex coordinates.
    
	We say $u$ satisfies the \textit{weak balancing condition} along $e$ if for any test function $\eta\in C^\infty_c(e)$ we have
	\[
	\sum_j\int_{\phi_{j,\sigma}(e)}\eta(\phi_{j,\sigma}(y))\frac{\partial f_j^2}{\partial x}(iy)dy = 0.
	\]
	We say that $u$ satisfies the \textit{strong balancing condition} along $e$ if
	\[
	\sum_j\frac{\partial f_j^2}{\partial x}(iy) = 0
	\]
	pointwise.
\end{definition}

\begin{proposition}
The minimizing map $u\in W^{1,2}(\rho)$ constructed in Theorem~\ref{harmonic} satisfies the weak balancing condition.
\end{proposition}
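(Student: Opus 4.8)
The plan is to derive the weak balancing condition as the first variation of energy under variations that are tangent to the edge $e$. The key observation is that $u$ minimizes energy in the class $W^{1,2}(\rho)$, so for any admissible variation $u_t$ with $u_0 = u$ and $u_t\in W^{1,2}(\rho)$, we must have $\frac{d}{dt}\big\vert_{t=0}E(u_t)\ge 0$, and when the variation can be run in both directions the derivative vanishes. The subtlety is that the variations must respect the simplicial structure: any competitor must still map $e$ into $e$ and each face $T_j$ into itself. So I would take a test function $\eta\in C^\infty_c(e)$ and construct a variation that, in the local model coordinates, slides points along the edge in the $y$-direction according to $\eta$, extending it smoothly into each incident face with compact support away from the vertices and other edges.

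Concretely, I would work in the local model $\{\phi_{j,\sigma}, \phi_{j,\tau}\}$ for the edge $e$, where $e$ is identified with a segment of the $y$-axis and each face $T_j$ sits in the right half-plane. Using $f_j = \phi_{j,\tau}\circ u\circ\phi_{j,\sigma}^{-1}$, the energy contribution of the face $T_j$ is $\int \rho^2\big(\abs{\partial_x f_j}^2 + \abs{\partial_y f_j}^2\big)$ times the appropriate area factor from $\sigma$ (Euclidean or hyperbolic). I would introduce the domain-side variation $\psi_t(iy) = i(y + t\eta(y))$ along the edge, extended into each face, so that the competitor is $u\circ\psi_t^{-1}$ (or the target-side analogue). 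Differentiating the energy in $t$ and integrating by parts in each face, the bulk terms vanish because $u$ is harmonic in the interior of each face (here one can invoke Theorem~\ref{interior smooth} or just the harmonicity used in its proof), leaving only boundary terms concentrated on $\phi_{j,\sigma}(e)$. These boundary terms are governed by the Hopf differential, and by Theorem~\ref{3.9}, specifically the Hopf balancing formula $\mathrm{Im}\sum_j\varphi_j(iy)=0$, the combination that survives is exactly $\sum_j\eta\,\frac{\partial f_j^2}{\partial x}(iy)$.

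I would then check that the variation $\psi_t$ genuinely preserves membership in $W^{1,2}(\rho)$: since $\eta$ is supported in the open edge and $\psi_t$ is a compactly supported diffeomorphism of a neighborhood of the edge fixing the other edges and vertices, the varied map is still simplicial and still lies in $W^{1,2}$ on each face. Because $\eta$ may be taken of either sign and the variation is two-sided, the first variation vanishes, yielding
\[
\sum_j\int_{\phi_{j,\sigma}(e)}\eta(\phi_{j,\sigma}(y))\frac{\partial f_j^2}{\partial x}(iy)\,dy = 0
\]
for every $\eta\in C^\infty_c(e)$, which is precisely the weak balancing condition of Definition~\ref{balancing}.

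The main obstacle I anticipate is bookkeeping the boundary terms correctly so that the conformal factor $\rho$ and the metric-dependent area element do not spoil the clean cancellation. The saving grace is condition~\ref{normal deriv} of Definition~\ref{conformal}, which forces $\partial_x\rho = 0$ along $e$ (as already exploited in Proposition~\ref{geodesic}), so $\rho$ is effectively constant to first order in the normal direction along the edge and factors out of the boundary integrand uniformly across the faces $T_j$; its common value then cancels from the balancing identity. One must also confirm that only the $\frac{\partial f_j^2}{\partial x}$ component (the normal derivative of the tangential coordinate) appears after integrating by parts, which is where the explicit structure of the Hopf differential and its real/imaginary decomposition in Theorem~\ref{3.9} does the essential work.
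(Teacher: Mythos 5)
Your overall strategy (first variation of the energy) is the same one the paper uses --- the paper simply cites Theorem 3 of \cite{DM3} --- but the specific variation you chose does not produce the weak balancing condition. A domain-side reparametrization $u\circ\psi_t^{-1}$ with $\psi_t$ sliding points along $e$ is exactly the variation whose first-variation identity is the stress-energy/Hopf balancing formula of Theorem~\ref{3.9}: the boundary term it leaves after integration by parts is $\sum_j\int_e\eta\,\langle\partial_x f_j,\partial_y f_j\rangle$, not $\sum_j\int_e\eta\,\partial_x f_j^2$. On the edge one has $f_j^1\equiv 0$, the local models agree, and $\tau$ is diagonal, so
\[
\left\langle\frac{\partial f_j}{\partial x},\frac{\partial f_j}{\partial y}\right\rangle
=\rho^2\tau_{22}\bigl(f(iy)\bigr)\,\frac{\partial f^2}{\partial y}(iy)\,\frac{\partial f_j^2}{\partial x}(iy),
\]
with the factor $\rho^2\tau_{22}\,\partial_y f^2$ common to all $j$. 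To pass from $\operatorname{Im}\sum_j\varphi_j(iy)=0$ to $\sum_j\partial_x f_j^2(iy)=0$ you must divide by $\partial_y f^2$, which is only legitimate off the critical set of the boundary map $u\vert_e$; on that set your argument gives no information. Moreover, invoking Theorem~\ref{3.9} to evaluate the boundary term of this very variation is circular: Theorem~\ref{3.9} \emph{is} the conclusion of the domain variation, so you would only be deriving $0=0$.

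The fix is to use the target-side variation you mention only parenthetically. Take a vector field $W$ on a neighborhood of $e$ in the target that equals $\partial/\partial y$ (tangent to $e$) in the local-model coordinates of each $\phi_{j,\tau}(T_j)$, cut off away from the other edges and vertices, and set $u_t(x)$ to be the flow of $u(x)$ along $W$ for time $t\eta$. Admissibility in $W^{1,2}(\rho)$ uses that $e$ is geodesic (Proposition~\ref{geodesic}) and the faces are convex (condition~\ref{convex} of Definition~\ref{conformal}). Stationarity, integration by parts, and harmonicity in each face leave the boundary term $\sum_j\int_e\eta\,\rho^2\tau_{22}\bigl(f(iy)\bigr)\,\partial_x f_j^2(iy)\,dy=0$; since $\rho^2\tau_{22}(f(iy))$ is a positive factor independent of $j$, it can be absorbed into the arbitrary test function $\eta$, giving exactly Definition~\ref{balancing}. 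No appeal to the Hopf differential is needed, and your observation about condition~\ref{normal deriv} controlling $\rho$ along $e$ is correct but plays no essential role here.
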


\begin{proof}
After changing notation, this is the content of Theorem 3 of \cite{DM3}. The result follows from the first variation formula for the energy functional.
\end{proof}

\begin{remark}
    In fact, a converse to the above proposition is true. Investigating the first variation formula for the energy functional shows that a map $u:(\xs,\sigma)\to(\xs,\rho^2\tau)$ is energy minimizing if and only if $u\vert_T$ is harmonic for each face $T$ and $u$ satisfies the weak balancing condition on each edge $e$.
\end{remark}

As a result of the balancing condition, the minimizing map $u\in W^{1,2}(\rho)$ enjoys additional boundary regularity.

\begin{proposition}\label{c1beta}
Fix an edge $e$ of $X$, and let $T_1,\ldots,T_n$ enumerate the faces incident to $e$. For the metrics $*=\sigma,\tau$, let $\{\phi_{j,*}:T_j\to\Delta_{j,*}\}$ (if $\sigma,\tau$ are Euclidean) or $\{\phi_{j,*}:T_j\to r_{j,*}\tilde{T}\}$ (if $\sigma,\tau$ are hyperbolic) be local models for $e$. Let $u\in W^{1,2}(\rho)$ be the minimizer from Theorem~\ref{harmonic}, and let $f_j = f_j^1 + if_j^2 = \phi_{j,\tau}\circ u\circ\phi_{j,\sigma}^{-1}$ represent $u\vert_{T_j}$ in coordinates.

Fix a point $iy\in\phi_{1,\sigma}(e)$. There is a neighborhood $\Omega$ of $iy$ and a constant $0<\beta<1$ so that $f_j^\alpha\in C^{1,\beta}(\Omega\cap\phi_{j,\sigma}(T_j))$ for each $j=1,\ldots,n$ and each $\alpha=1,2$.
\end{proposition}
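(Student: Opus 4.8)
The plan is to establish $C^{1,\beta}$ regularity up to the edge by reflecting and gluing the coordinate representations $f_j$ across the edge, exploiting the strong (or at least weak) balancing condition to control the derivatives that are not automatically matched. First I would reduce to the model situation: by Theorem~\ref{interior smooth}, each $f_j$ is already smooth in the interior of $\phi_{j,\sigma}(T_j)$, so the only issue is continuity of the first derivatives as one approaches the segment of the $y$-axis that represents $e$. By the local model construction, each $f_j$ is defined on a half-disc $\{x\geq 0\}$ and maps the diameter $\{x=0\}$ into $e$; since all $f_j$ share the same trace on $e$ (they agree with the common value of $u$ there), the tangential derivatives $\partial f_j^2/\partial y$ all coincide along $e$, and $f_j^1\equiv 0$ on $e$ so its tangential derivative vanishes there as well.

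Next I would set up the harmonic map system satisfied by each $f_j$. Because $u\vert_{T_j}$ is harmonic with respect to $\rho^2\tau$, in the conformal Euclidean coordinates $f_j$ solves a second-order elliptic system whose inhomogeneity is quadratic in $\nabla f_j$ with coefficients built from the Christoffel symbols of $\rho^2\tau$; these coefficients are $C^2$ by the hypotheses in Definition~\ref{conformal}. The key structural input is that $e$ is a geodesic for $\rho^2\tau$ (Proposition~\ref{geodesic}) and that $\nabla(\rho\vert_{T_j})$ is normal to $e$ (condition~\ref{normal deriv} of Definition~\ref{conformal}). These facts mean that the first component $f_j^1$ satisfies a homogeneous Dirichlet condition on $e$ (its boundary values are constant along $e$), so one may extend $f_j^1$ by odd reflection across $\{x=0\}$; the normal-derivative condition on $\rho$ ensures the reflected system has continuous coefficients, so $f_j^1$ gains $C^{1,\beta}$ regularity by standard Schauder boundary estimates for the oblique/Dirichlet problem. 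For the component $f_j^2$ one uses an even reflection: its tangential derivative is prescribed and shared across faces, and the balancing condition $\sum_j \partial f_j^2/\partial x(iy)=0$ is exactly the compatibility relation that allows the union $\cup_j f_j^2$ to be treated as a single weak solution of an elliptic equation on the glued domain of $N$ half-discs.

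Concretely, I would argue that the balancing condition is the weak Neumann-type transmission condition for the function on $\xs$ obtained by assembling the $f_j$, so the assembled map is a weak solution of an elliptic system on the full neighborhood $B_r(p_0)$ of the edge (not merely on each half-disc separately). Elliptic regularity for such transmission/matched systems, combined with the already-established Lipschitz bound from Theorem~\ref{global lip} (which gives $f_j\in W^{1,\infty}$ hence bounded right-hand side), then upgrades each $f_j^\alpha$ to $C^{1,\beta}$ on a one-sided neighborhood $\Omega\cap\phi_{j,\sigma}(T_j)$, for some H\"older exponent $\beta$ coming from De Giorgi--Nash--Moser or Schauder theory. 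The main obstacle, and the step requiring the most care, is verifying that the balancing condition genuinely serves as the correct transmission condition so that the reflected/glued problem has coefficients regular enough (continuous across the interface) to invoke boundary Schauder estimates; the quadratic gradient nonlinearity must be absorbed into a bounded $L^\infty$ (indeed H\"older, once $C^1$ is known via bootstrap) right-hand side, and one must ensure the odd/even reflections of the two components are mutually compatible with the single elliptic system rather than decoupling it in a way that loses the cross-terms from the off-diagonal Christoffel symbols.
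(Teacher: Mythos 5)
Your overall strategy --- reflect across the edge, use the balancing condition as the weak transmission/flux-matching condition, and then invoke elliptic regularity with the Lipschitz bound controlling the quadratic gradient terms --- is the same as the paper's, and your treatment of $f_j^1$ by odd reflection (using $f_j^1\equiv 0$ on $e$) is exactly what the paper does. The gap is in your treatment of $f_j^2$. A plain even reflection of a single $f_{j_0}^2$ does not produce a weak solution across $\{x=0\}$: even reflection negates the normal derivative, so the fluxes from the two sides match only if $\partial f_{j_0}^2/\partial x=0$ on the edge, which is not implied by anything. Your fallback --- viewing $\cup_j f_j^2$ as one weak solution on the glued domain of $n$ half-discs --- is fine when $n=2$ (the two half-discs form a genuine disc and the balancing condition is precisely the flux matching), but for $n\ge 3$ that glued domain is a book with $n$ pages, not a surface, and the ``standard'' interior Schauder/De Giorgi--Nash--Moser theory you want to cite does not apply to it directly. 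You flagged this as the delicate step but did not resolve it.

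The paper's resolution (following Proposition 39 of \cite{FG} and \cite{DM3}) is to extend each fixed $f_{j_0}^2$ to a single function on a genuine disc by the combination
\[
\psi^2(x+iy) = -f_{j_0}^2(-x+iy) + \frac{2}{n}\sum_{j=1}^n f_j^2(-x+iy), \qquad x<0.
\]
Since all the traces $f_j^2$ agree on the edge, this matches $f_{j_0}^2$ in value at $x=0$; and its normal derivative from the left is $\partial_x f_{j_0}^2 - \frac{2}{n}\sum_j\partial_x f_j^2$, which agrees weakly with the limit from the right exactly because of the weak balancing condition $\sum_j\partial_x f_j^2=0$. This converts the multi-sheeted transmission problem into a single elliptic equation on a disc, after which your elliptic-regularity and bootstrap reasoning goes through as you describe. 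Without this (or an equivalent) device, the $n\ge3$ case of your argument does not close.
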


\begin{proof}
The proof of this result is the same as the proof of Proposition 39 of \cite{FG}. We fix $r>0$ small enough so that $B_{j,r} = B_r(iy)\cap\{x\ge0\}\subset\phi_{j,\sigma}(T_j)$ for each $j$. Fixing an index $j_0$, we construct maps $\psi^\alpha$ extending $f^\alpha_{j_0}$ from the half disc $B_{r,j}$ to the disc $B_r(iy)$ and use elliptic regularity theory to show the result. We include the construction here since we'll need it in the future.

Fix an index $j_0\in\{1,\ldots,n\}$. For the function $f_{j_0}^2$, define
\[
\psi^2(x+iy) = \begin{cases}
f_{j_0}^2(x+iy), & x\ge0\\
-f_{j_0}^2(-x+iy) + \frac{2}{n}\sum_{j=1}^n f_j^2(-x+iy), & x<0.
\end{cases}
\]
For the function $f_{j_0}^1$, define
\[
\psi^1(x+iy) = \begin{cases}
f_{j_0}^1(x+iy), & x\ge0\\
-f_{j_0}^1(-x+iy), & x<0.
\end{cases}
\]

The Lipschitz continuity of $u$ from Theorem~\ref{global lip} implies that each $\psi^\alpha$ is Lipschitz continuous in $B_r(iy)$. Next we claim that for any test function $\varphi\in C^\infty_c(B_r(iy))$,
\[
\lim_{\epsilon\to0}\int_{B_r(iy)\cap \{x=\epsilon\}}\varphi\frac{\partial\psi^\alpha}{\partial x} = \lim_{\epsilon\to0}\int_{B_r(iy)\cap \{x=-\epsilon\}}\varphi\frac{\partial\psi^\alpha}{\partial x}.
\]
For $\alpha = 1$, use the fact that $f_{j_0}^1(it)=0$ for all $t$. For $\alpha = 2$, use the weak balancing condition of $u$.

From here the proof follows almost verbatim from \cite{DM3}. The functions $\psi^\alpha$ satisfy an elliptic equation with controlled coefficients, and the standard elliptic theory implies the regularity claimed.

\end{proof}

\begin{corollary}
The minimizing map $u\in W^{1,2}(\rho)$ from Theorem~\ref{harmonic} satisfies the strong balancing condition.
\end{corollary}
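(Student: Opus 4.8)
The plan is to upgrade the weak balancing condition, which the minimizer $u$ is already known to satisfy, to the pointwise strong balancing condition, using the boundary regularity just established in Proposition~\ref{c1beta}. The mechanism is the standard passage from a weak to a strong formulation: an identity holding after integration against all test functions, combined with continuity of the integrand, forces the integrand to vanish pointwise.

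Concretely, fix an edge $e$ of $X$ with incident faces $T_1,\dots,T_n$ and local models as in Definition~\ref{balancing}, and set
\[
g(y) = \sum_{j=1}^n \frac{\partial f_j^2}{\partial x}(iy).
\]
First I would check that $g$ is continuous along $e$. By Proposition~\ref{c1beta}, near each point $iy_0 \in \phi_{1,\sigma}(e)$ every $f_j^2$ lies in $C^{1,\beta}(\Omega\cap\phi_{j,\sigma}(T_j))$, so in particular $\partial f_j^2/\partial x$ extends continuously up to the segment of the $y$-axis lying in $\Omega$. Summing over the finitely many incident faces shows that $g$ is continuous near $iy_0$, and since $iy_0$ was arbitrary, $g$ is continuous on all of $e$. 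Next I would feed this into the weak balancing condition. Since the local models all restrict to the same map on $e$, the weak balancing condition reads $\int \eta(y)\, g(y)\, dy = 0$ for every $\eta \in C^\infty_c(e)$. With $g$ continuous, the fundamental lemma of the calculus of variations applies and yields $g \equiv 0$ along $e$; this is precisely the strong balancing condition of Definition~\ref{balancing}. As $e$ was an arbitrary edge, the conclusion holds on every edge.

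The only substantive ingredient is the continuity of the normal derivatives $\partial f_j^2/\partial x$ up to the edge, which is exactly the content of Proposition~\ref{c1beta}; without it one cannot localize the integrated identity to a pointwise statement. Everything else is routine, so I expect no real obstacle beyond correctly invoking this boundary regularity and recording that the passage from the weak to the strong formulation is legitimate once $g$ is known to be continuous.
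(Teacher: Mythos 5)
Your proposal is correct and follows exactly the paper's argument: the paper likewise deduces the strong balancing condition immediately from the weak balancing condition together with the $C^{1,\beta}$ boundary regularity of Proposition~\ref{c1beta}. Your write-up simply makes explicit the continuity of $\sum_j \partial f_j^2/\partial x$ along the edge and the appeal to the fundamental lemma of the calculus of variations, which the paper leaves implicit.
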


\begin{proof}
This follows immediately from the weak balancing condition and the regularity of Proposition~\ref{c1beta}.
\end{proof}

\begin{corollary}\label{all the way to boundary regularity}
Let $u:(X\backslash S,\sigma)\to(X\backslash S,\rho^2\tau)$ be a harmonic map. If $\rho^2\tau$ is $C^k$, then the restriction of $u$ to the closure of each face is $C^{k-1}$. If $\rho^2\tau$ is analytic, then the restriction of $u$ to the closure of each face is analytic.
\end{corollary}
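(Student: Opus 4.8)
The plan is to upgrade the interior regularity of Theorem~\ref{interior smooth} to regularity up to the closed faces, using the boundary regularity already established along the edges in Proposition~\ref{c1beta} together with the strong balancing condition from the preceding Corollary. The key structural point is that a harmonic map in $W^{1,2}(\rho)$ is characterized (by the remark after the weak balancing proposition) as a map that is harmonic on each open face and satisfies the weak, hence strong, balancing condition along each edge. So I would first separate the two sources of boundary: edge points and vertex points. Since the statement concerns the closure of each face and vertices lie in $S$, I would focus on extending the maps smoothly across the open edges.

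First I would fix an edge $e$ with incident faces $T_1,\ldots,T_n$, adopt the local models $\{\phi_{j,*}\}$ for $*=\sigma,\tau$ as in Proposition~\ref{c1beta}, and write $f_j=f_j^1+if_j^2$ for the coordinate representative of $u|_{T_j}$. The reflection functions $\psi^\alpha$ constructed in the proof of Proposition~\ref{c1beta} are already the right tool: $\psi^1$ is the odd reflection of $f_{j_0}^1$ (legitimate since $f_{j_0}^1$ vanishes on the $y$-axis as $u$ maps $e$ to $e$), and $\psi^2$ is the reflection of $f_{j_0}^2$ corrected by the strong balancing sum, which is what makes the reflected normal derivatives match across $\{x=0\}$. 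The point of the strong balancing condition is precisely that $\sum_j \partial f_j^2/\partial x(iy)=0$ pointwise, so the matching of one-sided derivatives in the claim inside Proposition~\ref{c1beta}'s proof now holds without a limit, and $\psi^\alpha$ is a genuine weak solution of an elliptic system across the full disc $B_r(iy)$.

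Next I would identify the elliptic system satisfied by the $\psi^\alpha$ on $B_r(iy)$. On the $x\ge 0$ side these solve the harmonic map equation $\Delta f_{j_0}^\alpha + {}^{\rho}\Gamma^\alpha_{\beta\gamma}(f_{j_0})\langle\nabla f_{j_0}^\beta,\nabla f_{j_0}^\gamma\rangle = 0$, whose coefficients are Christoffel symbols of $\rho^2\tau$, hence of one derivative lower regularity than the metric; on the $x<0$ side the reflected/corrected functions solve an elliptic system with coefficients built from the reflected metric data plus the contributions $\frac{2}{n}\sum_j f_j^2(-x+iy)$, which are themselves harmonic-map solutions on their own half-discs and therefore $C^{k-1}$ up to the edge by induction. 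The strategy is a standard bootstrap: starting from the $C^{1,\beta}$ regularity of Proposition~\ref{c1beta}, I would invoke Schauder/elliptic theory (in the spirit of Morrey, \cite{M2}, Theorem 6.8.1, as used in Theorem~\ref{interior smooth}) to conclude that $\psi^\alpha$, and hence each $f_j^\alpha$ on its closed half-disc, is $C^{k-1}$ when $\rho^2\tau$ is $C^k$, and analytic when $\rho^2\tau$ is analytic, using that the reflection preserves analyticity of the extended coefficients.

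The main obstacle is controlling the regularity of the elliptic system's coefficients across the edge once the balancing correction term is included, since the extended equation for $\psi^2$ is coupled to the other face representatives $f_j^2$ through the balancing sum rather than being a clean single harmonic-map equation. I would handle this by treating the coupling term as an inhomogeneity whose regularity is supplied by the already-established regularity of the neighboring $f_j^2$, running the bootstrap simultaneously for all $n$ faces and raising the common regularity exponent step by step. A secondary subtlety is that condition~\ref{normal deriv} of Definition~\ref{conformal}, which makes $\partial_x\rho$ vanish on $e$ (as exploited in Proposition~\ref{geodesic}), ensures the reflected conformal factor, and hence the reflected coefficients, inherit the needed regularity across the axis rather than developing a kink; I would note this explicitly to justify that the odd/even reflections produce coefficients of the stated regularity. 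With these in place the conclusion follows on the closure of each face, completing the proof.
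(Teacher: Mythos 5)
Your proposal is correct and follows essentially the same route as the paper: interior regularity from Theorem~\ref{interior smooth}, then regularity up to the open edges via an elliptic bootstrap applied to the reflected functions $\psi^\alpha$ from Proposition~\ref{c1beta} (the paper outsources this step to the bootstrapping argument of \cite{DM3}, Corollary 6, and to Theorem 6.8.2 of \cite{M2} for the analytic case). Your added remarks on the role of the strong balancing condition and of condition~\ref{normal deriv} of Definition~\ref{conformal} are accurate elaborations of details the paper leaves to the cited references.
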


\begin{proof}
Fist assume that $\rho^2\tau$ is $C^k$. In the interior of a face we know $u$ is $C^{k-1}$ from Theorem~\ref{interior smooth}. All that remains to check is that $u$ is $C^{k-1}$ up to the edges. This result follows from the bootstrapping argument of \cite{DM3}, Corollary 6, using the functions $\psi^\alpha$ defined in the proof of our Proposition~\ref{c1beta}.

If $\rho^2\tau$ is analytic, then the analyticity of the harmonic map follows from Theorem 6.8.2 of \cite{M2}. The argument is also detailed in \cite{FG}, Theorem 46
\end{proof}


\section{Singular metrics and energy minimizing maps}\label{SSing}

In this section we study the existence of energy minimizing maps from $(\xs, \sigma)$ to $(\xs, \rho^2\tau)$ where $\rho^2\tau$ is a cone metric as in Definition~\ref{cone}. We prove a more general result about compactness of harmonic maps, along the lines of Theorem 13 of \cite{Me}. The main differences between our results and \cite{Me} are that in our case our spaces are NPC, not necessarily compact and we require our sequence of metrics to be smooth.

\begin{theorem}\label{compactness}
Let $\sigma,\tau$ be either Euclidean metrics on $X$ or ideal hyperbolic metrics on $\xs$. Let $\{\rho_i^2\tau\}$ be a sequence of smooth conformal metrics on $\xs$ as in Definition~\ref{conformal}, and let $d_i$ denote the distance function associated to $\rho_i^2\tau$.
Let $H:(\xs, \sigma)\to (\xs, \tau)$ be a continuous, finite energy, simplicial map, e.g. from Theorem~\ref{finite energy euclidean} or Theorem~\ref{finite energy hyperbolic} (note that these maps have finite energy with respect to all of the $\rho_i^2\tau$ metrics), and let $u_i:(\xs, \sigma)\to (\xs, \rho^2_i\tau)$ be an energy minimizing simplicial map with respect to these metrics and in the homotopy class of $H$, from Theorem~\ref{harmonic}. Let $\delta_i = u_i^*d_i$ be the metric on the domain obtained by pulling back $d_i$ under $u_i$.

If the energy of $\{u_i\}$ is uniformly bounded and if $d_i$ converges locally uniformly to a distance function $d_0$, then there exists a subsequence $\{i'\}\subset\{i\}$ and a simplicial energy minimizing map $u_0$ with respect to $d_0$ so that $\delta_{i'}(\cdot, \cdot)$ converges uniformly to $d_0(u_0(\cdot), u_0(\cdot))$ and the energy of $u_{i'}$ converges to that of $u_0$.

Moreover the limit map $u_0$ is locally Lipschitz continuous.

\end{theorem}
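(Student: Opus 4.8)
The plan is to run the standard compactness scheme for energy minimizers into NPC targets, as in Korevaar--Schoen and \cite{Me}, adapted to the fact that the targets $(\xs,d_i)$ vary with $i$. The guiding idea is to extract a uniform limit of the coordinate representations of the $u_i$ and then identify this limit as an energy minimizer for $d_0$ by a competitor argument. The first ingredient is a \emph{uniform} Lipschitz bound. Each metric $\rho_i^2\tau$ is locally CAT(0) by Corollary~\ref{npcconf}, and $\sup_i E(u_i)<\infty$ by hypothesis, so the interior estimates underlying Theorem~\ref{local lip} and Theorem~\ref{global lip} apply: the Hopf-differential bounds of Theorem~\ref{3.10} together with the Korevaar--Schoen interior gradient estimate give, on any compact subset $V$ of $\xs$, a bound $\abs{\nabla u_i}^2\le C(V)E(u_i)$ with $C(V)$ independent of $i$. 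The point to check is that $C(V)$ depends only on the fixed domain metric $\sigma$ and on $V$, not on the individual conformal factor $\rho_i$; this holds because those estimates use only the uniform NPC structure of the targets. Hence the coordinate representations $f_{j,i}=\phi_{j,\tau}\circ u_i\circ\phi_{j,\sigma}^{-1}$ are uniformly Lipschitz on compact subsets of each fixed model triangle.

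Since each $u_i$ is simplicial it keeps every face within itself, so the $f_{j,i}$ take values in a fixed model triangle; together with the uniform Lipschitz bounds and the local uniform convergence $d_i\to d_0$, the family $\{u_i\}$ is equicontinuous with precompact image on compact subsets of $\xs$. By Arzel\`a--Ascoli I would pass to a subsequence $u_{i'}$ converging uniformly on compacta to a map $u_0$, which is Lipschitz, simplicial (the incidence conditions are closed under uniform limits and the vertices are fixed), and homotopic to $H$. Combining $u_{i'}\to u_0$ with $d_{i'}\to d_0$ gives $\delta_{i'}(\cdot,\cdot)=d_{i'}(u_{i'}(\cdot),u_{i'}(\cdot))\to d_0(u_0(\cdot),u_0(\cdot))$, the asserted distance convergence, and the uniform Lipschitz constants pass to the limit to yield $d_0(u_0(p),u_0(q))\le L\,d_\sigma(p,q)$ locally. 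This last observation establishes the final local Lipschitz claim, and it is precisely where the argument earns its keep, since the limit $d_0$ may be a singular cone metric to which Theorem~\ref{global lip} does not apply directly.

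It remains to prove minimality and energy convergence. Lower semicontinuity of the energy gives $E_{d_0}(u_0)\le\liminf_{i'}E_{d_{i'}}(u_{i'})$. For the reverse inequality I would compare against $u_0$ itself: since $u_0$ is a valid simplicial competitor homotopic to $H$ and $u_{i'}$ minimizes for $d_{i'}$, we have $E_{d_{i'}}(u_{i'})\le E_{d_{i'}}(u_0)\to E_{d_0}(u_0)$, whence $\limsup_{i'}E_{d_{i'}}(u_{i'})\le E_{d_0}(u_0)$ and the energies converge. The same comparison with an arbitrary competitor $v$ homotopic to $H$, using $E_{d_{i'}}(u_{i'})\le E_{d_{i'}}(v)\to E_{d_0}(v)$, shows $E_{d_0}(u_0)\le E_{d_0}(v)$, so that $u_0$ is $d_0$-minimizing.

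The two delicate points are exactly those flagged at the start of this section. First, the energy depends on the target metric, yet we are given only convergence of the \emph{distance functions}, not of the conformal factors; so both the lower semicontinuity and the continuity $E_{d_{i'}}(v)\to E_{d_0}(v)$ must be carried out using the intrinsic Korevaar--Schoen energy, defined through difference quotients of the target distance, for which distance-function convergence is the natural hypothesis. Second, in the hyperbolic case $\xs$ is non-compact and the Lipschitz bound degenerates near the cusps, through the factor $y^2$ appearing in the proof of Theorem~\ref{local lip}. I therefore expect the main obstacle to be preventing the images $u_{i'}$ from escaping into the punctured vertices and upgrading the convergence of $\delta_{i'}$ to be uniform on all of $\xs$; this cusp control should be obtainable from the finite energy together with the properness and degree-one structure of the minimizers (Theorem~\ref{proper and degree}) and the horocyclic geometry built into the local vertex models.
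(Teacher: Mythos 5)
Your overall scheme --- uniform Lipschitz bounds from the uniform energy bound, Arzel\`a--Ascoli with a diagonal argument over a compact exhaustion, and a competitor argument for minimality and energy convergence --- is the same as the paper's. The one structural difference is where you apply Arzel\`a--Ascoli: you compactify the maps $u_i$ directly via their coordinate representations, whereas the paper first extracts a locally uniform limit $\delta_0$ of the real-valued pullback distance functions $\delta_i$ on $X_n\times X_n$ (for which equicontinuity and boundedness are immediate from the Lipschitz bound and the triangle inequality), and only afterwards builds $u_0$ by a diagonal argument on a countable dense set, extending by the limiting Lipschitz estimate and completeness of the target. Your route is fine and arguably more direct; the paper's buys a cleaner separation between the convergence of the metric data and the construction of the limit map. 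The minimality arguments are essentially identical (yours uses $u_0$ itself as a competitor to close the $\limsup$ inequality; the paper gets the same inequality from the uniform convergence $\delta_{i'}\to d_0(u_0(\cdot),u_0(\cdot))$), and both leave the continuity $E_{d_{i'}}(v)\to E_{d_0}(v)$ at the level of invoking dominated convergence for the intrinsic energy.

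The one place where you have a real gap is the cusp control in the hyperbolic case. You correctly identify that precompactness of the images fails a priori (the ideal model triangle is non-compact), but you only gesture at a mechanism --- properness and degree one of the $u_i$ plus horocyclic geometry --- and it is not clear this works: properness of each $u_i$ individually gives no bound uniform in $i$. The paper's argument is much more elementary and you should be able to reconstruct it from the tools you already invoked: if $u_{i_\ell}(x_0)$ escaped to the cusp at $\infty$ in some face $T$, pick a point $x_1$ on the edge $e$ of $T$ opposite $\infty$; since the maps are simplicial, $u_{i_\ell}(x_1)\in e$, and the uniform Lipschitz bound gives $d\bigl(u_{i_\ell}(x_0),e\bigr)\le d\bigl(u_{i_\ell}(x_0),u_{i_\ell}(x_1)\bigr)\le C\,d_\sigma(x_0,x_1)$, contradicting escape to $\infty$. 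With that substitution your proof matches the paper's.
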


\begin{proof}
Let $\{X_n\}$ be a compact exhaustion of the domain as described at the beginning of Section~\ref{SExist}. We first argue on the compact set $X_1$. Since the local Lipschitz bound of $u_i$ on $X_1$ depends only on the total energy of $u_i$ by Theorem~\ref{global lip}, we can take $L_1$ independent of $i$ so that 
\begin{equation}\label{bounded}
\delta_i(x,y)=d_i(u_i(x), u_i(y))\leq L_1d_\sigma(x,y),
\end{equation}
for $x,y\in X_1$.

Let $\mathcal{M}_1$ denote the family of metrics $\delta_i:X_1\times X_1\to \mathbb{R}$. It is easy to see that $\mathcal{M}_1$ is equicontinuous since for $(x_j, y_j)\in X_1\times X_1$, for $j=1,2$,
\begin{align*}
	\abs{\delta_i(x_1, y_1)-\delta_i(x_2, y_2)}&\leq \delta_i(x_1, x_2)+\delta_i(y_1, y_2)\\
	& \leq L_1(d_\sigma(x_1, x_2)+d_\sigma(y_1, y_2)).
\end{align*}
Note also that equation~\eqref{bounded} implies that $\mathcal{M}_1$ is uniformly bounded.

Thus, using the Arzel\`a-Ascoli Theorem, there exists a subsequence $\{\delta_i^1\}$ that converges uniformly on $X_1$. Repeating the same argument on $X_2$, there exists a subsequence of $\delta_i^1$, namely $\delta_i^2$ that converges uniformly on $X_2$. Using a diagonal method, we can take the sequence $\delta_i^i$ and this sequence will converge locally uniformly on $\xs$ to a metric $\delta_0:\xs\to\xs\to\mathbb{R}$. 

We now describe how to find the map $u_0$ with the property that
\[
\delta_0(\cdot, \cdot)=d_0(u_0(\cdot), u_0(\cdot)).
\]
Let $\mathcal{C}$ be a countable dense subset of $\xs$. Since each face is isometric to either an Euclidean or ideal hyperbolic triangle in $\mathbb{R}^2$, we can find a countable dense subset on each face, and since there are finitely many faces, the union of all these points is a choice of $\mathcal{C}$. 
 
We will need to establish pointwise boundedness of $\{u_i\}$. When $X$ is Euclidean, domain and target are compact and therefore, $\{u_i\}$ is uniformly bounded. If $\sigma, \tau$ are hyperbolic, we must show that for every $x_0\in\xs$ there exists $y_0\in \xs$ and $R>0$ independent of $\epsilon$ so that $d(u_i(x_0),y_0)\leq R$. Suppose for contradiction that for some $x_0\in\xs$ the sequence $\{u_i(x_0)\}$ is not pointwise bounded, so there is a subsequence $\{u_{i_\ell}(x_0)\}$ such that $u_{i_\ell}(x_0)$ escapes to a punctured vertex of $X$. The point $x_0$ lies in some face $T$, which we identify with the ideal hyperbolic triangle via an isometry $\phi:T\to\tilde{T}$. Without loss of generality we may assume that $\phi(u_{i_\ell}(x_0))$ escapes to the point at $\infty$.
	
Let $e$ be the edge opposite $\infty$, the hyperbolic geodesic joining $0$ and $1$. Fix $x_1\in \phi^{-1}(e)$, and take some compact set $X\subset\xs$ containing both $x$ and $x_1$. Since the sequence $u_{i_\ell}$ is uniformly Lipschitz continuous in $X_1$ we have
\begin{eqnarray*}
	d(\phi(u_{i_\ell}(x_0)),e) & = & \inf_{y\in e}d(\phi(u_{i_\ell}(x_0)),y)\\
	& \le & d(\phi(u_{i_\ell}(x_0)),\phi(u_{i_\ell}(x_1)))\\
	&\le & C d_\sigma(x_0,x_1).
\end{eqnarray*}
 Since $d(\phi(u_{i_\ell}(x_0)),e)$ is uniformly bounded in $\ell$, it cannot have escaped to $\infty$ as we supposed, a contradiction. Hence the sequence $\{u_i\}$ is pointwise bounded as desired.
	 
Therefore, for a fixed $x_1\in\mathcal{C}$, there exists a subsequence $\{u_i^1(x_1)\}\subset\{u_i(x_1)\}$ that converges. Then, for $x_2\in\mathcal{C}$, there exist a convergent subsequence $\{u_i^2(x_2)\}\subset\{u_i^1(x_2)\}$. Using a diagonal argument, we show $\{u_i^i(x)\}$ converges for all $x\in\mathcal{C}$. Set $u_0(x)=\lim_{i\to\infty}u_i^i(x)$ for $x\in S$. 

Let $x, y\in\mathcal{C}\cap X_1$. Then
\begin{align*}
d_0(p_x, p_y) = &\lim_{i\to\infty}d_0(u_i(x), u_i(y))\\
= & \lim_{i\to\infty}d_i(u_i(x), u_i(y))\\
\leq & L_1d_\sigma(x,y),
\end{align*}
where we've used the uniform convergence of the metrics in the second equality. A similar argument bounds $d_0(u_0(x),u_0(y))$ for $x,y\in\mathcal{C}\cap X_n$ for each $n$.
Thus, if a sequence $\{x_k\}\subset\mathcal{C}$ converges to $z\in\xs$ then for $k, \ell$ large enough, there is some $n$ so that
\[
d_0(u_0(x_k), u_0(x_\ell)) \leq L_nd_\sigma (x_k, x_\ell)<\epsilon.
\]
That is $\{u_0(x_k)\}$ is a Cauchy sequence. By completeness of the target, this sequence converges to some point $q\in\xs$. Define $u_0(z)=q$. Using this method we can define $u_0(z)$ for all points $z\in\xs$ since $\mathcal{C}$ was dense.

Finally, we need to see that this map is energy minimizing. Let $v\in W^{1,2}(\rho)$. Let $E_0(v)$ denote the energy of $v$ with respect to the distance $d_0$ and $E_i(v)$ the energy of $v$ with respect to $d_i$. Since $d_i$ converges uniformly to $d_0$, and by the dominated convergence theorem, for $i$ sufficiently large we have
\[
E_0(v)\geq E_i(v)-\epsilon\geq E_i(u_i)-\epsilon,
\]
where the last inequality follows from the assumption that $u_i$ minimize energy with respect to $d_i$. 
Moreover, since $\delta_i$ converges uniformly to $d_0(u_0(\cdot), u_0(\cdot))$, for $i$ large enough,
\[
E_i(u_i)\geq E_0(u_0)-\epsilon.
\]
Thus, by letting $\epsilon$ go to 0 we obtain that $E_0(u_0) \leq E_0(v)$ for all $v\in W^{1,2}(\rho)$ as we wanted. The Lipschitz continuity of $u_0$ is inherited from the sequence $u_i$.

\end{proof}

\begin{corollary}\label{harmonic-cone}
    For a cone metric $\rho^2\tau$, there exists an energy minimizing map $u\in W^{1,2}(\rho)$. That is,
	\[
	E(u)=\inf_{v\in W^{1,2}(\rho)} E(v).
	\]
	Moreover, $u$ is locally Lipschitz continuous.
\end{corollary}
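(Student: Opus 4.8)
The plan is to realize the cone metric $\rho^2\tau$ as the limit of a sequence of smooth conformal metrics to which Theorem~\ref{harmonic} applies, and then to extract the desired minimizer as the limit furnished by the compactness result Theorem~\ref{compactness}. First I would invoke Lemma~\ref{lemma1 kubert} to produce the family $\{\rho_\epsilon\}_{\epsilon>0}$ of smooth conformal factors satisfying Definition~\ref{conformal}, monotone nondecreasing in $\epsilon$, with $\rho_\epsilon^2\tau\to\rho^2\tau$ on the closure of each face ($C^0$ on $X$, locally $C^3$ on $T\backslash\mathscr{C}$). For each $\epsilon$, since $\rho_\epsilon$ genuinely satisfies Definition~\ref{conformal}, Theorem~\ref{harmonic} yields an energy minimizing map $u_\epsilon\in W^{1,2}(\rho_\epsilon)$ which by the same theorem is homotopic to the fixed finite energy map $H$ from Theorem~\ref{finite energy euclidean} or Theorem~\ref{finite energy hyperbolic}.

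The heart of the argument is to verify the hypotheses of Theorem~\ref{compactness} for the sequence $\{u_\epsilon\}$. For the uniform energy bound I would exploit the monotonicity of the $\rho_\epsilon$: fixing a reference scale $\epsilon_0$, the pointwise inequality $\rho_\epsilon\leq\rho_{\epsilon_0}$ for $\epsilon\leq\epsilon_0$ gives, since the energy density scales with the square of the target conformal factor, the chain $E_\epsilon(u_\epsilon)\leq E_\epsilon(H)\leq E_{\epsilon_0}(H)<\infty$, where the first inequality is minimality and the final finiteness is the finite energy of $H$ against the honest conformal factor $\rho_{\epsilon_0}$. The remaining hypothesis, local uniform convergence of the distance functions $d_\epsilon$ of $\rho_\epsilon^2\tau$ to the distance function $d_0$ of $\rho^2\tau$, I would deduce from the $C^0$ convergence of the conformal factors: lengths of curves are computed by integrating the uniformly convergent metric, and distances are infima of such lengths, so $d_\epsilon\to d_0$ uniformly on compact subsets of $\xs$.

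With both hypotheses in hand, Theorem~\ref{compactness} provides a subsequence $\{\epsilon'\}$ and a simplicial, energy minimizing, locally Lipschitz continuous map $u_0$ with respect to $d_0$, whose energy is the limit of the $E_{\epsilon'}(u_{\epsilon'})$. This $u_0$ is exactly the asserted element of $W^{1,2}(\rho)$ with $E(u_0)=\inf_{v\in W^{1,2}(\rho)}E(v)$, and its local Lipschitz continuity is the second assertion of the corollary.

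The step I expect to be the main obstacle is the locally uniform convergence $d_\epsilon\to d_0$ near the cone points $\mathscr{C}$, where the limiting factor $\rho=r^{\alpha-1}\phi$ degenerates. One must ensure the $C^0$ control of Lemma~\ref{lemma1 kubert} is strong enough that distances neither collapse nor blow up there; concretely, that a short curve passing close to a cone point contributes comparably small $\rho_\epsilon$-length for all small $\epsilon$, which is where the uniformity of the mollified convergence, rather than merely the pointwise limit, must be used.
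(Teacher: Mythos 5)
Your proposal is correct and follows essentially the same route as the paper: mollify the cone metric via Lemma~\ref{lemma1 kubert}, apply Theorem~\ref{harmonic} to each smooth approximant, and pass to the limit with Theorem~\ref{compactness}. The one point where you diverge is the uniform energy bound, which you derive cleanly from the monotonicity $\rho_\epsilon\leq\rho_{\epsilon_0}$ and the chain $E_\epsilon(u_\epsilon)\leq E_\epsilon(H)\leq E_{\epsilon_0}(H)$, whereas the paper attributes it (less transparently) to the Lipschitz bounds of Theorem~\ref{global lip}; your version is arguably the more direct justification.
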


\begin{proof}
    Let $\rho^2\tau$ be a cone metric as in Definition~\ref{cone}. By Lemma~\ref{lemma1 kubert} there are smooth conformal factors $\rho_\epsilon$ as in Definition~\ref{conformal} converging uniformly to $\rho$. Hence the induced metrics $d_\epsilon$ from the metrics $\rho_\epsilon^2\tau$ converge locally uniformly to the distance $d_0$ induced from $\rho^2\tau$.
    
    For each of the smooth conformal metrics $\rho_\epsilon^2\tau$ there is a harmonic map $u_\epsilon\in W^{1,2}(\rho_\epsilon)$ by Theorem~\ref{harmonic}. The Lipschitz bounds of Theorem~\ref{global lip} provide a uniform energy bound for the $u_\epsilon$.
    
    Hence, by Theorem~\ref{compactness} there is a sequence $\epsilon_i$ tending to $0$ so that the maps $u_{\epsilon_i}$ converge to a map $u_0$, and $u_0$ is energy minimizing with respect to the cone metric $\rho^2\tau$ and locally Lipschitz continuous.
\end{proof}

\section{Approximation by homogeneous maps}

This section will follow closely results of \cite{GS} and of \cite{DM1}. The maps $u$ and $u_D$ from Theorem~\ref{harmonic} have regularity on the interiors of each face as established in Theorem~\ref{interior smooth}, and the structure the harmonic maps of Corollary~\ref{harmonic-cone} around cone points was studied in \cite{K}. All that remains is to study the behavior along the edges and at the vertices of $X$. To that end, let $\sigma,\tau$ be two metrics on $X$, either both Euclidean or both ideal hyperbolic, $\rho$ a conformal factor as in Definition~\ref{conformal}, and $v:(\xs,\sigma)\to(\xs,\rho^2\tau)$ one of the minimizing maps from Theorem~\ref{harmonic} or Corollary~\ref{harmonic-cone}.

Let $p$ be either a point on some edge $e$ of $X$ or, if $\sigma$ and $\tau$ are Euclidean, a vertex of $X$, and let $q=v(p)$. If $p\in e$ is an edge point, enumerate the faces $T_j$ incident to $p$, consider local models $\{\phi_{j,\sigma}\}$ and $\{\phi_{j,\tau}\}$ for the edge $e$ (as in Definitions~\ref{local edge euc} and ~\ref{local edge hyp}), and shift the maps $\phi_j$ vertically until $\phi_{j,\sigma}(p)=\phi_{j,\tau}(q)=(0,0)$ for all $j$. If $\sigma$ and $\tau$ are Euclidean and $p=q=v\in X$ is a vertex, consider local models $\{\phi_{j,\sigma}\}$ and $\{\phi_{j,\tau}\}$ for $v$ as in Definition~\ref{local vertex euc}.

In either case let $r_0>0$ be such that the ball $B_{r_0}(p)$ (for both metrics $\sigma$ and $\tau$) lies entirely within the star $st(p)$ of $p$ (as discussed at the beginning of Section ~\ref{SLocReg}). For $r<r_0$ define the following functions.
\[
    E(r) = \int_{B_r(p)}\abs{\nabla v}^2d\mu_\sigma \qquad\text{and}\qquad I(r) = \int_{\partial B_r(p)}d^2(v,q)ds.
\]

\begin{lemma}\label{variations}
    For $r<r_0$,
    \[
        2E(r) \leq \int_{\partial B_r(p)}\frac{\partial}{\partial r}d^2(v,q)ds
    \]
    and
    \[
        E'(r) = \int_{\partial B_r(p)}\abs{\nabla v}^2d\mu_\sigma = 2\int_{\partial B_r(p)}\abs{\frac{\partial v}{\partial r}}^2ds.
    \]
\end{lemma}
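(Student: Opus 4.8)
The plan is to recognize the two displays as the standard \emph{target-variation inequality} and \emph{inner-variation identity} from the theory of energy minimizing maps into NPC spaces (c.f. \cite{GS}, \cite{KS1}, \cite{DM1}), and to check that our singular setting does not spoil the classical computations. On a smooth surface both statements are routine; here the only delicate point is that the domain $(\xs,\sigma)$ fails to be a manifold along the edges (and, in the Euclidean vertex case, at $p$ itself), so I must make sure that no boundary term is lost where the incident faces meet. The mechanism that rescues the computation is Theorem~\ref{3.9}: the holomorphicity of each Hopf differential $\varphi_j$ together with the balancing formula $\mathrm{Im}\sum_j\varphi_j(iy)=0$. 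Throughout I would work in the conformal Euclidean coordinates of the local model of $p$; since in two dimensions both the energy and the notion of subharmonicity are conformally invariant, this is harmless in the hyperbolic case.

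For the first inequality I would set $f=d^2(v,q)$, where $d$ is the distance of the NPC target $(\xs,\rho^2\tau)$ (NPC by Corollary~\ref{npcconf}). Because $v$ is energy minimizing and the target is NPC, $f$ is weakly subharmonic on $B_r(p)$ with the estimate $\Delta f\ge 2\abs{\nabla v}^2$. The key is that this is proved directly from the minimizing property, by comparing $v$ with its geodesic contraction toward $q$ (see \cite{KS1}), so it is a global weak statement on $B_r(p)$ and holds across the edges with no extra argument. Integrating over $B_r(p)$ and applying the divergence theorem then gives $\int_{\partial B_r(p)}\frac{\partial}{\partial r}f\,ds=\int_{B_r(p)}\Delta f\,d\mu_\sigma\ge 2\int_{B_r(p)}\abs{\nabla v}^2 d\mu_\sigma=2E(r)$, which is the claim.

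For the second display, the first equality $E'(r)=\int_{\partial B_r(p)}\abs{\nabla v}^2$ is just the coarea formula. Writing the energy density in polar coordinates as $\abs{\nabla v}^2=\abs{\partial_r v}^2+r^{-2}\abs{\partial_\theta v}^2$, the remaining equality reduces to the per-radius balance $\int_{\partial B_r}\abs{\partial_r v}^2\,ds=\int_{\partial B_r}r^{-2}\abs{\partial_\theta v}^2\,ds$. I would obtain this from the Hopf differentials: a direct computation gives $\abs{\partial_r v}^2-r^{-2}\abs{\partial_\theta v}^2=\mathrm{Re}\!\left(e^{2i\theta}\varphi_j\right)$ on each face, so the difference of the two circle integrals equals $\tfrac1r\,\mathrm{Im}\sum_j\oint z\varphi_j\,dz$ taken over the arcs of $\partial B_r$ lying in each face. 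Since each $\varphi_j$ is holomorphic (Theorem~\ref{3.9}), Cauchy's theorem converts each arc integral into the integral of $z\varphi_j$ along the edge portion of $\partial(B_r\cap T_j)$; parametrizing the edge as $z=iy$ and summing over the incident faces, these combine into $\int y\,\mathrm{Im}\sum_j\varphi_j(iy)\,dy$, which vanishes by the balancing formula of Theorem~\ref{3.9}. Hence the difference of circle integrals is zero and $E'(r)=2\int_{\partial B_r(p)}\abs{\partial_r v}^2\,ds$. The Euclidean vertex case is identical, using holomorphicity of $\varphi_j$ on each angular sector and the balancing condition along the radial edges incident to $v$.

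The main obstacle is exactly this bookkeeping along the singular locus. The first inequality sidesteps it because NPC subharmonicity is a global weak statement that needs no face-by-face integration by parts; the delicate point lives entirely in the second equality, where the balancing formula of Theorem~\ref{3.9} is indispensable, being precisely the identity that cancels the edge (or radial-edge) contributions produced by Cauchy's theorem. A secondary, purely technical matter is the passage between $\sigma$ and the conformal Euclidean metric of the local model in the hyperbolic case, which I expect to dispose of using conformal invariance of the energy in two dimensions.
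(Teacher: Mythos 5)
Your proposal is correct, and the two halves deserve separate verdicts. For the first inequality you are on the paper's own route: the paper compares $v$ with $v_t=Q_{t\eta}\circ v$, where $Q_t$ is the geodesic contraction toward $q$, and expands as in \cite{GS} Proposition 2.2 / \cite{EF} Lemma 10.2 --- which is precisely the derivation of the weak inequality $\Delta d^2(v,q)\ge 2\abs{\nabla v}^2$ that you invoke. The one thing you gloss over is your claim that this ``holds across the edges with no extra argument'': for the competitor $Q_{t\eta}\circ v$ to be admissible it must remain simplicial, and this is exactly where the paper uses convexity of the faces (condition~\ref{convex} of Definition~\ref{conformal}) and geodesicity of the edges (Proposition~\ref{geodesic}); the subtlety here is on the target side, not the domain side. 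For the second identity your route is genuinely different from the paper's. The paper obtains $E'(r)=2\int_{\partial B_r(p)}\abs{\partial v/\partial r}^2\,ds$ directly from the inner (domain) variation $v_t=v\circ P_{t\eta}$ with $P_t(x,y)=(tx,ty)$, expanding the vanishing first variation as in \cite{DM1} Proposition 3.2 or \cite{GS} equation (2.3). You instead use the coarea formula together with $\abs{\partial_r v}^2-r^{-2}\abs{\partial_\theta v}^2=\mathrm{Re}\left(e^{2i\theta}\varphi_j\right)$, convert the circle integrals into edge integrals by Cauchy's theorem on each half-disc or sector, and cancel them with the balancing formula $\mathrm{Im}\sum_j\varphi_j(iy)=0$ of Theorem~\ref{3.9}; the orientation bookkeeping does work out, and the vertex case follows from the rotated balancing condition on each radial edge. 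This buys a transparent identification of exactly which first-variation information (holomorphicity plus balancing) drives the identity, but it costs an extra regularity input: Cauchy's theorem up to the edge requires $\varphi_j$ to have boundary values along the edge, which you should justify via the Lipschitz bound of Theorem~\ref{global lip} and the $C^{1,\beta}$ regularity of Proposition~\ref{c1beta}, and you should note that both Theorem~\ref{3.9} and Proposition~\ref{c1beta} are stated for the $W^{1,2}(\rho)$ minimizer, so a word is needed to see that the same domain variations (which preserve the diffeomorphism class) give the balancing formula for $u_D$ as well. The paper's domain-variation argument avoids both issues since it is a purely weak computation.
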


\begin{proof}
The statements above are identical to ones found in the literature and follow from studying variations of the harmonic maps. Those variations considered continue to respect the simplicial structure of the maps, so the proofs carry forward to the present case.

For the first statement consider the contraction $Q_t:(B_{r_0}(q),\rho^2\tau)\to (B_{r_0}(q),\rho^2\tau)$ so that for each $y\in B_{r_0}(q)$ the curve $t\mapsto Q_t(x)$ is a constant speed geodesic from $q$ to $y$. By the convexity (item \ref{convex} of Definition~\ref{conformal}) and geodesic (Proposition~\ref{geodesic}) properties of the metric $\rho^2\tau$, the map $Q_t$ is a simplicial map. Now for a positive test function $\eta$ with support in $B_{r_0}(p)$ define
\[
	v_t(x) = Q_{t\eta(x)}(v(x)).
\]
Since $v=v_0$ minimizes energy,
\[
	\int_{B_{r_0}(p)}\abs{\nabla v}^2d\mu_\sigma \leq \int_{B_{r_0}(p)}\abs{\nabla v_t}^2d\mu_\sigma.
\]
Expanding as in \cite{EF} Lemma 10.2, or as in \cite{GS} Proposition 2.2, and letting $\eta$ approach the characteristic function of $B_r(p)$ yields
\[
	2E(r) \leq \int_{\partial B_r(p)}\frac{\partial}{\partial r}d^2(v,q)ds.
\]

For the second, consider the contraction $P_t:(B_{r_0}(p),\sigma)\to(B_{r_0}(p),\sigma)$ defined analogously to $Q_t$. In coordinates on each $T_j$ induced by the map $\phi_{j,\sigma}$, $P_t(x,y) = (tx,ty)$. For a positive test function $\eta$ with support in $B_{r_0}(p)$ define
\[
	v_t(x) = v(P_{t\eta(x)}(x)).
\]
Again the enery functional is minimized on $v=v_0$, so
\[
	\frac{d}{dt}\vert_{t=0}\int_{B_{r_0}(p)}\abs{\nabla v_t}^2d\mu_\sigma = 0.
\]
Expanding as in \cite{DM1} Proposition 3.2, or as in \cite{GS} leading up to equation (2.3), and again letting $\eta$ approach the characteristic function of $B_r(p)$ yields
\[
	E'(r) = 2\int_{\partial B_r(p)}\abs{\frac{\partial v}{\partial r}}^2ds.
\]
\end{proof}

\begin{lemma}\label{order}
    For $r<r_0$, the function
    \[
        r\mapsto \frac{rE(r)}{I(r)}
    \]
    is non-decreasing, and thus the limit
    \[
        \alpha = \lim_{r\to0}\frac{rE(r)}{I(r)}
    \]
    exists.
\end{lemma}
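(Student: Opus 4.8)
The plan is to establish monotonicity of the frequency-type quotient $\alpha(r)=rE(r)/I(r)$ by showing that its logarithmic derivative is non-negative, following the Almgren frequency argument as adapted to NPC targets in \cite{GS} and \cite{DM1}. Since $v$ is Lipschitz (Theorem~\ref{global lip}), both $E(r)$ and $I(r)$ are absolutely continuous in $r<r_0$; assuming $v$ is non-constant near $p$ (the degenerate case being trivial), they are positive, so wherever they are differentiable we may write
\[
\frac{d}{dr}\log\frac{rE(r)}{I(r)} = \frac1r + \frac{E'(r)}{E(r)} - \frac{I'(r)}{I(r)}.
\]

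First I would compute $I'(r)$. Working in the conformal Euclidean coordinates of the local models, in which the two-dimensional energy is conformally invariant so that $E(r)$ is unchanged, the boundary $\partial B_r(p)$ carries the length element $ds=r\,d\theta$, whether $B_r(p)$ is a disc (interior point) or a union of half-discs (edge point). Writing $I(r)=r\int d^2(v,q)\,d\theta$ and differentiating gives
\[
I'(r) = \frac{I(r)}{r} + \int_{\partial B_r(p)}\frac{\partial}{\partial r}d^2(v,q)\,ds.
\]
Substituting into the previous display, the two $1/r$ terms cancel, and the claim $\alpha'(r)\ge0$ reduces to
\[
\frac{E'(r)}{E(r)} \ge \frac{1}{I(r)}\int_{\partial B_r(p)}\frac{\partial}{\partial r}d^2(v,q)\,ds.
\]

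To prove this, set $A=\int_{\partial B_r(p)}\partial_r d^2(v,q)\,ds$, which is non-negative since $A\ge 2E(r)\ge0$ by the first inequality of Lemma~\ref{variations}. Expanding $\partial_r d^2 = 2\,d(v,q)\,\partial_r d(v,q)$ and using that, the target being locally CAT(0) (Corollary~\ref{npcconf}), the distance to the fixed point $q$ is $1$-Lipschitz, so that $\abs{\partial_r d(v,q)}\le\abs{\tfrac{\partial v}{\partial r}}$ pointwise, Cauchy--Schwarz gives
\[
A \le 2\left(\int_{\partial B_r(p)}d^2(v,q)\,ds\right)^{1/2}\left(\int_{\partial B_r(p)}\abs{\frac{\partial v}{\partial r}}^2 ds\right)^{1/2}.
\]
The first factor is $I(r)^{1/2}$, and by the second formula of Lemma~\ref{variations} the second factor equals $(E'(r)/2)^{1/2}$, so $A^2\le 2\,I(r)\,E'(r)$. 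Combining with $E(r)\le A/2$ yields
\[
E(r)\,A \le \frac{A}{2}\cdot A = \frac{A^2}{2} \le I(r)\,E'(r),
\]
which is precisely the reduced inequality after dividing by $E(r)I(r)>0$. Hence $\alpha$ is non-decreasing; being bounded below by $0$, it admits a limit as $r\to0$, and this limit is the order $\alpha$.

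I expect the main obstacle to be the pointwise comparison $\abs{\partial_r d(v,q)}\le\abs{\tfrac{\partial v}{\partial r}}$ and the correct interpretation of the directional energy $\abs{\tfrac{\partial v}{\partial r}}^2$ for a map into the singular NPC target, which must be taken in the Korevaar--Schoen sense rather than computed naively face-by-face. Coupled to this is the bookkeeping at edge points: one must verify that the measure-scaling $ds=r\,d\theta$ and the differentiation of $I(r)$ under the integral sign remain valid when $B_r(p)$ is a union of half-discs glued along a diameter, and that both variation formulas of Lemma~\ref{variations}, which already respect the simplicial structure, feed correctly into the Cauchy--Schwarz step summed over the incident faces.
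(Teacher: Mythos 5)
Your proposal is correct and follows essentially the same route as the paper, which likewise derives $I'(r)=\frac{I(r)}{r}+\int_{\partial B_r(p)}\partial_r d^2(v,q)\,ds$ and combines both formulas of Lemma~\ref{variations} with Cauchy--Schwarz and the pointwise bound $\abs{\partial_r d(v,q)}\leq\abs{\frac{\partial v}{\partial r}}$ to get $\frac{d}{dr}\log\frac{rE(r)}{I(r)}\geq 0$; you have simply written out the algebra that the paper delegates to \cite{GS} and \cite{DM1}.
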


\begin{proof}
Again this fact exists in the literature, see e.g. \cite{GS} equations (2.4) and (2.5) or \cite{DM1} Proposition 3.2. Standard computation on each face $T_j$ gives the derivative of $I(r)$ as
\[
	I'(r) = \int_{\partial B_r(p)}\frac{\partial}{\partial r}d^2(v,q)ds + \frac{I(r)}{r}.
\]
Combining with the results of Lemma~\ref{variations}, the Cauchy-Schwarz inequality, and the triangle inequality (in the form of $\abs{\frac{\partial}{\partial r}d(v,q)}\leq\abs{\frac{\partial v}{\partial r}}$) yields
\[
	\frac{d}{dr}\log\left(\frac{rE(r)}{I(r)}\right) \geq 0.
\]
\end{proof}

A consequence of Lemma~\ref{order}, as expanded in \cite{DM1} Theorem 3.5, is the H\''older continuity of our minimizing maps. That result only establishes local H\''older continuity, but combined with the Lipschitz continuity of Theorem~\ref{global lip} and the finiteness of the complex $X$, we can upgrade the result.

\begin{corollary}
    When $\sigma$ and $\tau$ are Euclidean metrics, the minimizing maps $u\in W^{1,2}(\rho)$ and $u_D\in\overline{\mathcal{D}(\rho)}$ from Theorem~\ref{harmonic} are globally H\"older continuous.
\end{corollary}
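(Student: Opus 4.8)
The plan is to establish local Hölder continuity at every point of $X$ and then upgrade to a global estimate using the compactness of $X$ in the Euclidean case. Since $X$ is a finite complex carrying a Euclidean metric, $(X,\sigma)$ is a compact length space, and the target $(X,\rho^2\tau)$ has finite diameter because $\rho$ is continuous and positive on the compact set $X$. Away from the vertices, Theorem~\ref{global lip} already gives that $u$ and $u_D$ are locally Lipschitz, hence locally Hölder with exponent $1$; the only points at which this estimate degenerates are the vertices $S$, so the work is concentrated there.

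At each vertex $v\in S$ — here we use that $\sigma$ and $\tau$ are Euclidean, so that the monotonicity formula of Lemma~\ref{order} is available at $v$ — the order $\alpha=\lim_{r\to0}\frac{rE(r)}{I(r)}$ exists and is positive. The standard consequence of this monotonicity, namely a uniform Morrey-type energy-decay estimate as carried out in \cite{DM1} Theorem~3.5, produces an open neighborhood $U_v$ of $v$, an exponent $\beta_v\in(0,1)$, and a constant $C_v$ such that
\[
d_{\rho^2\tau}(u(x),u(y))\le C_v\,d_\sigma(x,y)^{\beta_v}\qquad\text{for all }x,y\in U_v,
\]
and the same for $u_D$. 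Because $S$ is finite, we obtain finitely many such neighborhoods, and the complement $K=X\setminus\bigcup_v U_v$ is a compact subset of $\xs$ on which Theorem~\ref{global lip} gives a uniform Lipschitz bound; as $K$ is bounded this is in particular a Hölder bound for any exponent. Setting $\beta=\min_v\beta_v$, we may choose a common constant $C_0$ valid for the exponent $\beta$ on $K$ and on each $U_v$.

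It remains to patch these local estimates into a global one. Enlarge the $U_v$ slightly so that $\{U_v\}_v\cup\{W\}$ is an open cover of $X$ with $\overline W\subset\xs$ compact, and let $\delta>0$ be a Lebesgue number for this finite cover. If $d_\sigma(x,y)<\delta$ then $x$ and $y$ lie in a common cover element, so $d_{\rho^2\tau}(u(x),u(y))\le C_0\,d_\sigma(x,y)^{\beta}$. If $d_\sigma(x,y)\ge\delta$ then, with $D=\operatorname{diam}(X,\rho^2\tau)<\infty$,
\[
d_{\rho^2\tau}(u(x),u(y))\le D\le D\,\delta^{-\beta}\,d_\sigma(x,y)^{\beta}.
\]
Taking $C=\max\{C_0,\,D\delta^{-\beta}\}$ yields global $\beta$-Hölder continuity of $u$, and the identical argument handles $u_D$. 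The step requiring the most care is the uniformity of the local Hölder estimate throughout an \emph{entire} neighborhood of each vertex, not merely at the vertex itself; this is exactly what the energy-decay formulation of \cite{DM1} Theorem~3.5 supplies, and once it is combined with the Lipschitz bound of Theorem~\ref{global lip} the finiteness of $X$ makes the remaining globalization routine.
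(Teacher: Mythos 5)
Your proposal is correct and follows essentially the same route as the paper, which disposes of this corollary in a single sentence: local H\"older continuity near the vertices via the monotonicity of Lemma~\ref{order} as expanded in \cite{DM1} Theorem~3.5 (available at vertices precisely because $\sigma,\tau$ are Euclidean), the Lipschitz bound of Theorem~\ref{global lip} away from the vertices, and the finiteness of the complex to globalize. Your write-up merely makes explicit the patching argument (Lebesgue number plus finite target diameter) that the paper leaves implicit.
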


\subsection{Homogeneous approximating maps}

To study the local behavior of $v$ near $p$ we will blow up the metrics $\sigma$ and $\rho^2\tau$ near $p$ and $q$ respectively. The following lemma will help us characterize the limiting map we find.

\begin{lemma}[GS Lemma 3.2]
    If $u_*:B_r(p)\to (X,\rho^2\tau)$ is harmonic and $\frac{rE(r)}{I(r)}=\alpha$ for all $0<r<r_0(p)$ then $u_*$ is intrinsically homogeneous of order $\alpha$.
\end{lemma}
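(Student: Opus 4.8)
The plan is to read this as the rigidity, or equality, case of the frequency monotonicity proved in Lemma~\ref{order}, applied to $u_*$ with $q=u_*(p)$. Setting $N(r)=\frac{rE(r)}{I(r)}$ and $J(r)=\int_{\partial B_r(p)}\frac{\partial}{\partial r}d^2(u_*,q)\,ds$, the computation behind Lemma~\ref{order} expresses the logarithmic derivative as
\[
\frac{d}{dr}\log N(r)=\frac{E'(r)}{E(r)}-\frac{J(r)}{I(r)}\geq 0,
\]
and its nonnegativity rests on exactly three ingredients: the variational bound $2E(r)\leq J(r)$ from the first half of Lemma~\ref{variations}; the pointwise triangle inequality $\abs{\frac{\partial}{\partial r}d(u_*,q)}\leq\abs{\frac{\partial u_*}{\partial r}}$; and the Cauchy--Schwarz inequality $J(r)\leq 2\,I(r)^{1/2}\bigl(\tfrac12 E'(r)\bigr)^{1/2}$ on $\partial B_r(p)$. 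Since by hypothesis $N\equiv\alpha$, the left-hand side vanishes for every $r$, forcing all three of these inequalities to be equalities for almost every $r\in(0,r_0)$.

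I would then extract the pointwise content of these equalities. Equality in Cauchy--Schwarz forces $\abs{\frac{\partial u_*}{\partial r}}(r,\omega)=\lambda(r)\,d(u_*(r,\omega),q)$ for some function $\lambda(r)$ and almost every angular direction $\omega$, and equality in the triangle inequality forces $\frac{\partial}{\partial r}d(u_*(r,\omega),q)=\abs{\frac{\partial u_*}{\partial r}}(r,\omega)$; geometrically the latter says that each radial curve $r\mapsto u_*(r,\omega)$ traces a geodesic emanating from $q$. Combining the two gives the separable equation $\frac{\partial}{\partial r}d(u_*(r,\omega),q)=\lambda(r)\,d(u_*(r,\omega),q)$. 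To identify $\lambda$, I would feed these two identities back into the last equality $2E(r)=J(r)$: they give $J(r)=2\lambda(r)I(r)$, hence $E(r)=\lambda(r)I(r)$, and therefore $\alpha=N(r)=r\lambda(r)$, so that $\lambda(r)=\alpha/r$. Integrating the ODE yields
\[
d(u_*(r,\omega),q)=c(\omega)\,r^{\alpha},
\]
which is the radial scaling part of homogeneity of order $\alpha$.

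What remains, and where I expect the genuine work to lie, is upgrading this radial scaling to full intrinsic homogeneity: one must show that the entire pullback distance $d(u_*(r_1,\omega_1),u_*(r_2,\omega_2))$ is homogeneous of degree $\alpha$ in $(r_1,r_2)$, so that $u_*$ is the cone over its restriction to a single link sphere. Following the argument of \cite{GS} (Lemma 3.2), I would use that the directional energy densities are now completely determined by the equalities above and scale covariantly, together with the NPC quadrilateral comparison available from Corollary~\ref{npcconf}, to conclude that for each $\lambda\in(0,1]$ the rescaled map $x\mapsto u_*(\lambda x)$ agrees with $u_*$ after dilating the target by the factor $\lambda^\alpha$ along the geodesics issuing from $q$. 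The one subtlety peculiar to our simplicial setting is that every equality above, especially the Pohozaev-type identity $2E=J$, must hold in the distributional sense across the edges of $X$; this is precisely what the balancing condition buys us, since the competitor variations $Q_t$ and $P_t$ of Lemma~\ref{variations} are themselves simplicial and balancing cancels the boundary contributions along each edge, so that no edge term is lost and the reasoning of \cite{GS} transfers without change.
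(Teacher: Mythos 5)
Your proposal is correct and follows essentially the same route as the paper, which defers to Gromov--Schoen Lemma 3.2 and notes that the argument consists of analyzing the equality case of the inequalities in Lemma~\ref{order}; your first two paragraphs carry that out explicitly and already yield both clauses of the paper's definition of intrinsic homogeneity (radial geodesics from $q$ together with $d(u_*(rx),q)=r^\alpha d(u_*(x),q)$). The extra work in your third paragraph on homogeneity of the full pullback distance goes beyond what the stated definition requires, but it does no harm.
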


The meaning of the phrase \emph{intrinsically homogeneous} of order $\alpha$ means that for $r<1$ and $x\in B_{r_0}(p)$ we have $d(u_*(rx),q)=r^\alpha d(u_*(x),q)$, and the curve $r\mapsto u_*(rx)$ traces the geodesic from $q$ to $u_*(x)$. The proof of the above lemma is identical to the one found in \cite{GS} Lemma 3.2, and involves studying the inequalities used in Lemma~\ref{order} in the case of equality.

In an effort to construct a homogeneous map, first represent the map $v$ in the coordinates in $T_j$ defined by the local models $\{\phi_{j,\sigma}\}$ and $\{\phi_{j,\tau}\}$ via the formula
\[
    f_j = \phi_{j,\tau}\circ v\circ \phi^{-1}_{j,\sigma}.
\]
Then for $\lambda,\mu>0$ define
\[
    f_{j,\lambda,\mu}(x,y) = \mu^{-1}f_j(\lambda x,\lambda y)
\]
and define $u_{\lambda,\mu}$ by
\[
    v_{\lambda,\mu} = \phi^{-1}_{j,\tau}\circ f_{j,\lambda,\mu}\circ\phi_{j,\sigma}\quad\text{in}\quad T_j.
\]

The map $v_{\lambda,\mu}$ should be seen as a map from $(\xs,\sigma)$ to $(\xs,\mu^{-2}\rho^2\tau)$. By a change of variables one can see that
\[
    \int_{B_r(p)}\abs{\nabla v_{\lambda\mu}}^2 = \lambda^2\mu^{-2}\int_{B_{\lambda r}(p)}\abs{\nabla v}^2 = \lambda^2\mu^{-2}E(\lambda r)
\]
and
\[
    \int_{\partial B_r(p)}d^2(v_{\lambda,\mu},q) = \lambda\mu^{-2}\int_{\partial B_{\lambda r}(p)}d^2(v,q) = \lambda\mu^{-2}I(\lambda r).
\]

Recall from Definition~\ref{conformal} that $\rho$ is $C^3$ on the closure of each face, including up to $p$. As $\mu\to0$ the metrics $\mu^{-2}\rho^2\tau$ on the neighborhoods $B_{\rho r_0}(q)$ converge to a flat metric $\tau^*$ on $st(q)$. If $\tau$ is Euclidean than the $\tau^*=\tau$ itself. If $\tau$ is hyperbolic, then $\tau^*$ is the flat metric conformal to $\tau$.
The following proposition produces a homogeneous map that approximates $v$, and its proof is identical to \cite{GS} Proposition 3.3 or \cite{DM1} Proposition 6.1.

\begin{proposition}
    Choosing $\mu = \big(\lambda^{-1}I(\lambda)\big)^{1/2}$, the maps $u_{\lambda,\mu}$ converge as $\lambda\to0$ to a non-constant homogeneous minimizing map $u_*:(B_r(p),\sigma)\to (st(p),\tau^*)$ of degree $\alpha>0$.
\end{proposition}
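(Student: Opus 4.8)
The plan is to carry out the tangent-map construction of \cite{GS} Proposition 3.3 and \cite{DM1} Proposition 6.1 in our simplicial setting, with the monotonicity of Lemma~\ref{order} as the driving mechanism and the compactness argument of Theorem~\ref{compactness} supplying the limit. The normalization $\mu=\big(\lambda^{-1}I(\lambda)\big)^{1/2}$ is engineered so that the rescaled boundary integral $\int_{\partial B_1(p)}d^2(v_{\lambda,\mu},q)\,ds$ stays comparable to a fixed positive constant independent of $\lambda$; this is exactly what keeps the blow-ups from collapsing and will force the limiting order to be strictly positive. Since $v$ is a non-constant minimizer, $I(\lambda)>0$ for all small $\lambda$, so $\mu$ is well defined.

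First I would record uniform energy and Lipschitz bounds for the family $\{v_{\lambda,\mu}\}$ on $B_1(p)$. From the scaling identities for $\int_{B_r(p)}\abs{\nabla v_{\lambda,\mu}}^2$ and $\int_{\partial B_r(p)}d^2(v_{\lambda,\mu},q)$, the frequency of $v_{\lambda,\mu}$ at radius $r$ equals $(\lambda r)E(\lambda r)/I(\lambda r)$, which by the monotonicity of Lemma~\ref{order} is bounded above, uniformly in $\lambda$ for $r\le1$, by the value of $sE(s)/I(s)$ at a fixed radius $s$. Together with the normalized boundary integral this bounds the total energy $\int_{B_1(p)}\abs{\nabla v_{\lambda,\mu}}^2$ independently of $\lambda$. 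Since each $v_{\lambda,\mu}$ is itself energy minimizing for the rescaled target metric (being a coordinate rescaling of the minimizer $v$), the interior and edge Lipschitz estimates of Theorem~\ref{global lip} and Theorem~\ref{3.10} apply with constants depending only on this uniform energy bound, giving a uniform Lipschitz bound on compact subsets of $B_1(p)$.

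Next I would extract the limit precisely as in Theorem~\ref{compactness}. As $\mu\to0$ the rescaled target metrics converge locally uniformly to the flat metric $\tau^*$ on $st(q)$; here the hypothesis from Definition~\ref{conformal} that $\rho$ is $C^3$ up to $p$ is what guarantees that $\rho$ is asymptotically constant at the infinitesimal scale, so that no curvature survives the blow-up. The pulled-back distance functions $v_{\lambda,\mu}^{*}d$ are uniformly bounded and equicontinuous, so the Arzel\`a--Ascoli and diagonalization argument of Theorem~\ref{compactness}, run over a countable dense subset of $B_r(p)$, produces a subsequence $\lambda_i\to0$ along which $v_{\lambda_i,\mu_i}$ converges uniformly on compacta to a map $u_*:(B_r(p),\sigma)\to(st(p),\tau^*)$, with convergence of the energies and of the boundary integrals. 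Lower semicontinuity of energy together with the minimality of each $v_{\lambda_i,\mu_i}$ shows $u_*$ is energy minimizing into $(st(p),\tau^*)$, and the normalization makes $\int_{\partial B_1(p)}d^2(u_*,q)\,ds$ a positive constant, so $u_*$ is non-constant.

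Finally I would identify the homogeneity. The frequency of $v_{\lambda_i,\mu_i}$ at any fixed radius $r<r_0$ equals $(\lambda_i r)E(\lambda_i r)/I(\lambda_i r)\to\alpha$, so passing to the limit gives $rE_{u_*}(r)/I_{u_*}(r)=\alpha$ for every such $r$; by the characterization stated above (GS Lemma 3.2), the minimizing map $u_*$ with constant frequency is intrinsically homogeneous of order $\alpha$, and non-constancy gives $\alpha>0$. The main obstacle is precisely this last passage to the limit: one must promote lower semicontinuity of energy to genuine convergence $E_{u_{\lambda_i,\mu_i}}(r)\to E_{u_*}(r)$ at each fixed $r$, ruling out any concentration of energy at the blow-up scale, so that the frequency passes to the limit as an \emph{equality} rather than a mere inequality. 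This, together with the verification that the limit into the varying target $\tau^*$ remains minimizing, is where the uniform Lipschitz bounds and the compactness machinery of Theorem~\ref{compactness} do the real work. The simplicial structure is preserved throughout, since each rescaling $v_{\lambda,\mu}$ is defined face-by-face through the local models and hence sends faces to faces and the edge to the edge, a property inherited by the uniform limit $u_*$.
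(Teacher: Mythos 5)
Your proposal is correct and follows essentially the same route as the paper, which proves this proposition by invoking the blow-up argument of \cite{GS} Proposition 3.3 and \cite{DM1} Proposition 6.1: normalize so the boundary integral at radius one is fixed, use the monotonicity of Lemma~\ref{order} for uniform energy bounds, extract a limit via the Lipschitz estimates and the compactness machinery, and identify the homogeneity from the constancy of the frequency together with the cited Lemma 3.2 of \cite{GS}. You also correctly isolate the one genuinely delicate step, namely upgrading lower semicontinuity of the energy to actual convergence so that the frequency passes to the limit as an equality.
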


Unfortunately the structure near the punctured vertices of ideal hyperbolic complexes does not follow from the same computations of this section.

\subsection{Order at edge points}

In the coordinates given by the local models $\{\phi_{j,\sigma}\}$ and $\{\phi_{j,\tau}\}$, represent the homogeneous map $u_*$ constructed above in each $T_j$ by
\[
    f_{j} = \phi_{j,\tau}\circ u_*\circ\phi^{-1}_{j,\sigma}.
\]
The homogeneous maps $f_j$ all satisfy $f(tx,ty) = t^\alpha f(x,y)$.

\begin{theorem}
    If $p\in e$ is an edge point then the order $\alpha = \lim_{r\to0}\frac{rE(r)}{I(r)}$ of $u$ at $p$ is 1.
\end{theorem}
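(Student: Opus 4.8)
The plan is to analyze the homogeneous tangent map $u_*$ produced in the previous subsection, pin down the integer values $\alpha$ can take, and then invoke the degree-one nature of the minimizer to select $\alpha=1$. Recall that in the coordinates of the local models the components $f_j = f_j^1 + i f_j^2$ of $u_*$ are homogeneous of degree $\alpha$, and since the blown-up target metric $\tau^*$ is flat, each $f_j^1, f_j^2$ is a homogeneous harmonic function on the sector $\theta\in[-\pi/2,\pi/2]$, so $f_j^a = r^\alpha(A_j^a\cos\alpha\theta + B_j^a\sin\alpha\theta)$. I would first record the three edge constraints these must satisfy: (i) the edge-to-edge condition $f_j^1(iy)=0$, forcing $A_j^1\cos(\tfrac{\alpha\pi}{2}) = B_j^1\sin(\tfrac{\alpha\pi}{2})=0$; (ii) continuity of $f_j^2$ along $e$, so that $f_j^2(iy)$ is independent of $j$; and (iii) the strong balancing condition $\sum_j \partial_x f_j^2(iy)=0$ established as a corollary to Proposition~\ref{c1beta}. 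I would also note at the outset that $\alpha\geq 1$: the global Lipschitz bound of Theorem~\ref{global lip} passes to the tangent map, giving $d(u_*(x),q)\leq C|x|$, which is incompatible with the homogeneity relation $d(u_*(x),q)=|x|^\alpha d(u_*(x/|x|),q)$ for a non-constant $u_*$ unless $\alpha\geq 1$.

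The next step is to show $\alpha$ is a positive integer. For this I would apply the reflection construction from the proof of Proposition~\ref{c1beta} to the tangent map itself: the odd reflection $\psi^1$ of $f_{j_0}^1$ (legitimate because $f_{j_0}^1$ vanishes on the edge) and the balanced reflection $\psi^2$ of $f_{j_0}^2$ (legitimate because of the strong balancing condition) are genuinely harmonic on the full disc $B_r(p)$. Being built from homogeneous pieces of degree $\alpha$, the functions $\psi^a$ are themselves homogeneous of degree $\alpha$; a function harmonic on a disc and homogeneous of degree $\alpha\geq 1$ must have $\alpha\in\{1,2,3,\dots\}$, since its harmonic expansion collapses to the single term $r^\alpha(a\cos\alpha\theta+b\sin\alpha\theta)$ and smoothness at the origin forces an integer frequency. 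Because $u_*$ is non-constant, at least one such $\psi^a$ is non-trivial, so $\alpha$ is indeed a positive integer.

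The crux, and the step I expect to be the main obstacle, is ruling out $\alpha\geq 2$; this cannot come from energy comparison alone, since on each face the homogeneous map is already the harmonic (hence energy-minimizing) extension of its own boundary data, so it must instead use the degree-one character of the minimizer. The key observation is that a homogeneous harmonic map of degree $\alpha$ winds its angular variable $\alpha$ times as fast as the identity: as $\theta$ traverses the opening-$\pi$ sector of a face, the image sweeps through angle $\alpha\pi$, so $u_*$ covers the corresponding target face with local degree $\alpha$, and for $\alpha\geq 2$ it fails to be injective near $p$ (indeed some target ray is hit twice, so there exist distinct interior points with the same image). For the diffeomorphism minimizer $u_D$ this is immediately contradictory: by Theorem~\ref{interior smooth} $u_D$ restricts to a diffeomorphism, hence an injection, on each open face, so no such branching can occur and $\alpha=1$. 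For the $W^{1,2}$ minimizer $u$, which is homotopic to the degree-one map $H$ and proper of degree one by Theorem~\ref{proper and degree}, I would argue that the local topological degree along the open edge $e$ must equal one — a branch point of order $\alpha\geq 2$ at the interior edge point $p$ would contribute local degree $\alpha$ and violate the global degree count — so again $\alpha=1$.

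Putting these together, $\alpha$ is a positive integer forced to equal $1$, which is the claim. The delicate point to make fully rigorous is the identification of the homogeneity degree $\alpha$ of the tangent map with the local topological degree of $u$ at the edge point, and the verification that the degree-one and properness properties localize to force local degree one along the open edges; the explicit homogeneous forms above make the winding count transparent, but care is needed to exclude the degenerate possibility that $u_*$ collapses a face onto the edge, which is itself ruled out because a collapsing tangent map would have local degree zero, again contradicting degree one.
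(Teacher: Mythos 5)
Your first two steps are sound: the explicit trigonometric form of a homogeneous harmonic map and the edge conditions do force $\alpha$ to be a positive integer, and your reflection route to integrality is essentially equivalent to the paper's (the paper gets it more directly by imposing $f_j^1(r,\pm\pi/2)=0$ on the form $r^\alpha(c_1\cos\alpha\theta+c_2\sin\alpha\theta)$ and noting that for non-integer $\alpha$ this forces $c_1=c_2=0$). The problem is the crux step, ruling out $\alpha\ge 2$, where you misdiagnose what tool is needed. You claim that a homogeneous harmonic map of degree $\alpha\ge 2$ on the half-plane sector ``fails to be injective near $p$'' and ``hits some target ray twice.'' This is false: $z\mapsto z^2$ is a degree-$2$ homogeneous harmonic map that is \emph{injective} on the open right half-plane and hits each target ray exactly once. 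So the contradiction with the diffeomorphism property of $u_D$ does not materialize, and the degree-counting argument for $u$ rests on two further unproved assertions (that the local topological degree of $u$ at an edge point equals the homogeneity degree of its tangent map, and that global degree one localizes to local degree one at every edge point) -- neither of which is established in the paper or follows easily, especially since a general homogeneous harmonic map need not be conformal, so its image does not simply ``sweep through angle $\alpha\pi$.''

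The obstruction the paper actually uses is much more elementary and is exactly the simplicial constraint you set aside: each $f_j$ must map the right half-plane \emph{into} the right half-plane, because $u$ sends $T_j$ to $T_j$ and the blow-up preserves this. For an integer $\alpha\ge 2$ the first coordinate
\[
f_j^1(r,\theta)=r^\alpha\bigl(c_1\cos\alpha\theta+c_2\sin\alpha\theta\bigr)
\]
is, up to amplitude and phase, $A r^\alpha\cos(\alpha\theta-\varphi)$; as $\theta$ runs over the interval $(-\pi/2,\pi/2)$ of length $\pi$, the argument $\alpha\theta-\varphi$ runs over an interval of length $\alpha\pi\ge 2\pi$, so $f_j^1$ changes sign unless $c_1=c_2=0$. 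A sign change puts part of the image outside the right half-plane, contradicting $f_j(T_j)\subset T_j$; the degenerate case $c_1=c_2=0$ collapses the face onto the edge and is excluded because $u_*$ is non-constant (this is the same degeneracy you flag at the end, but it is handled here without any appeal to degree). Note in particular that $z^2$ is consistent with injectivity but \emph{not} with the image staying in the half-plane -- which is why the half-plane constraint, not injectivity or global degree, is the right lever. Your proof as written does not close without this replacement.
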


\begin{proof}
If $p$ is an edge point then the local models $\{\phi_{j,\sigma}\}$ and $\{\phi_{j,\tau}\}$ all have images lying in the right half-plane. As a result, the homogeneous maps $f_j$ described above all map the right half-plane to the right half-plane. Adopting polar coordinates $(r,\theta)$ in the domain, homogeneous harmonic maps $f$ of degree $\alpha$ all have the form
\[
    f(r,\theta) = \Big(r^\alpha(c_1\cos\alpha\theta+c_2\sin\alpha\theta),r^\alpha(c_3\cos\alpha\theta+c_4\sin\alpha\theta)\Big)
\]
for constants $c_1,c_2,c_3,c_4\in\mathbb{R}$.

Each $f_j$ maps the $y$-axis ($\theta = \pm\pi/2$) to the $y$-axis. Along $\theta=\pi/2$ this yields
\[
    c_1r^\alpha\cos(\alpha\pi/2) + c_2r^\alpha\sin(\alpha\pi/2) = 0.
\]
And along $\theta=-\pi/2$,
\[
    c_1r^\alpha\cos(\alpha\pi/2) - c_2r^\alpha\sin(\alpha\pi/2) = 0.
\]
If $\alpha$ is not an integer than this system has only the trivial solution $c_1=c_2=0$. But this would contradict the fact that $u_*$ is non-constant. Hence $\alpha\in\mathbb{Z}_{>0}$.

If $\alpha\geq 2$ then the first coordinate of $f_j$, $r^\alpha(c_1\cos\alpha\theta+c_2\sin\alpha\theta)$, changes signs in the interval $-\pi/2<\theta<\pi/2$. But the image of $f_j$ lies in the right half-plane, whose first coordinate is always positive. As a result, $\alpha\leq 1$. The only possibility left is $\alpha = 1$.
\end{proof}

\newpage

\end{document}